\numberwithin{equation}{section}
\newtheorem{Theorem}{Theorem}[section]
\newtheorem{Proposition}[Theorem]{Proposition}
\newtheorem{Corollary}[Theorem]{Corollary}
\newtheorem{Lemma}[Theorem]{Lemma}
\theoremstyle{definition}
\newtheorem{Remark}[Theorem]{Remark}
\newtheorem{Definition}[Theorem]{Definition}
\def\cc{\mathbf{c}}
\def\ff{\mathbf{f}}
\def\gg{\mathbf{g}}
\def\xx{\mathbf{x}}
\def\yy{\mathbf{y}}
\def\TT{\mathbb{T}}
\def\PP{\mathbb{P}}
\def\ZZ{\mathbb{Z}}
\def\QQ{\mathbb{Q}}
\def\Fcal{\mathcal{F}}
\def\Xcal{\mathcal{X}}
\def\Ycal{\mathcal{Y}}
\def\QQsf{\mathbb{Q}_{\text{sf}}}
\def\trop{\text{Trop}}
\begin{document}
\allowdisplaybreaks

\newcommand{\arXivNumber}{1808.02156}

\renewcommand{\PaperNumber}{040}

\FirstPageHeading

\ShortArticleName{Duality between Final-Seed and Initial-Seed Mutations in Cluster Algebras}

\ArticleName{Duality between Final-Seed and Initial-Seed \\ Mutations in Cluster Algebras}

\Author{Shogo FUJIWARA and Yasuaki GYODA }

\AuthorNameForHeading{S.~Fujiwara and Y.~Gyoda}
\Address{Graduate School of Mathematics, Nagoya University, Chikusa-ku, Nagoya, 464-8602 Japan}
\Email{\href{mailto:m17036g@math.nagoya-u.ac.jp}{m17036g@math.nagoya-u.ac.jp}, \href{mailto:m17009g@math.nagoya-u.ac.jp}{m17009g@math.nagoya-u.ac.jp}}

\ArticleDates{Received October 16, 2018, in final form May 10, 2019; Published online May 15, 2019}

\Abstract{We study the duality between the mutations and the initial-seed mutations in cluster algebras, where the initial-seed mutations are the transformations of rational expressions of cluster variables in terms of the initial cluster under the change of the initial cluster. In particular, we define the maximal degree matrices of the $F$-polynomials called the $F$-matrices and show that the $F$-matrices have the self-duality which is analogous to the duality between the $C$- and $G$-matrices.}

\Keywords{cluster algebra; mutation; duality}

\Classification{13F60}

\section{Introduction}
\emph{Cluster algebras} are a class of commutative algebras introduced by \cite{fzi}, which are generated by some distinguished elements called the \emph{cluster variables}.
The cluster variables are given by applying the \emph{mutations} repeatedly starting from the initial cluster variables. Through the description of mutations as transformations on quivers or triangulations, the cluster algebras are applied in various areas of mathematics such that representation theory of quivers \cite{bmrrt}, hyperbolic geometry \cite{fg09, fst}, etc.

Let us briefly recall the notion of mutations, which is essential in cluster algebra theory. For simplicity, we consider here a cluster pattern \emph{without coefficients} \cite{fzi}. Let $\TT_n$ be the $n$-regular tree. Then, for each vertex $t\in\TT_n$, cluster variables $\xx_t=(x_{1;t}, \dots, x_{n;t})$ are attached such that for any adjacent vertices $t$ and $t'$ of $\TT_n$, the variables $\xx_t$ and $\xx'_t$ are related by a rational transformation called a \emph{mutation}. Let $t_0\in\TT_n$ be a given point in $\TT_n$ called the \emph{initial point}. Then, repeating mutations starting from the initial variables, we have the expression
\begin{gather*}
	x_{i;t}=\Xcal^{t_0}_{i;t}(\xx_{t_0}),
\end{gather*}
where \looseness=-1 the rational function $\Xcal^{t_0}_{i;t}$ depends on $t$ and $t_0$. For a vertex $t'$ which is adjacent to $t$ in $\TT_n$, the mutation transforms the function $\Xcal^{t_0}_{i;t}$ to the function $\Xcal^{t_0}_{i;t'}$. On the other hand, for a~ver\-tex~$t_1$ which is adjacent to $t_0$ in $\TT_n$, the mutation also transforms the function~$\Xcal^{t_0}_{i;t}$ to the func\-tion~$\Xcal^{t_1}_{i;t}$. We call this transformation the \emph{initial-seed mutations}. (In contrast to the initial-seed mutations, we call the ordinary mutations also as the \emph{final-seed mutations} in this paper.)

Thanks to the \emph{separation formulas} of \cite{fziv}, the cluster variables and the coefficients are described by the \emph{$C$-matrices}, the \emph{$G$-matrices}, and the \emph{$F$-polynomials}, where the $C$- and $G$-matrices are the ``\emph{tropical part}'', while the $F$-polynomials are the ``\emph{nontropical part}''. The final-seed and initial-seed mutations of the cluster variables and the coefficients reduce to the ones for the $C$- and $G$-matrices and the $F$-polynomials~\cite{fziv}.
In \cite{nz}, the duality between the $C$- and $G$-matrices under the final-seed and initial-seed mutations was established (under the assumption of the sign-coherence of the $C$-matrices, which is established in \cite{ghkk}). In~\cite{rs}, an analogous duality was established also for the \emph{denominator vectors} for some classes of cluster algebras.
 In this paper, we treat the nontropical part of the seeds, namely the $F$-polynomials. Since the $F$-polynomials are rather complicated, we consider the ``degree matrices'' of $F$-polynomials (\emph{F-matrices}). Under applying the sign-coherence of the $C$-matrices, a column vectors of the $F$-matrices correspond to the exponent vector of the unique monomial with maximal degree of a $F$-polynomials.
 Then, we establish the duality of those matrices under the final-seed and initial-seed mutations, which is parallel to the ones in \cite{nz, rs}. This is the main result of the paper. Also, in a process of proving the main result, we have a nice relation between the $C$-, $G$-, $F$-matrices under the exchange of the initial matrix $B$ and $-B$.

We also exhibit the final-seed and initial-seed mutation formulas of the $C$- and $G$-matrices systematically, some of which are new in the literature. We stress that the duality of the final-seed and initial-seed mutations becomes manifest only after applying the sign-coherence of the $C$-matrices~\cite{ghkk}.

The organisation of the paper is as follows:
In Section~\ref{section2}, we start with recalling the definitions of the seed mutations and the cluster patterns according to~\cite{fziv} and give the final-seed mutations.
 Also, using the sign-coherence of the $C$-matrices, we obtain a reduced form of the final-seed mutations. The formulas involving the $F$-matrices are new results.
In Section~\ref{section3}, the initial-seed mutation formulas are given using the \emph{$H$-matrices} of~\cite{fziv}. By sign-coherence of the $C$-matrices and the duality of the $C$- and $G$-matrices of \cite{nz}, we prove the conjecture \cite[Conjecture~6.10]{fziv}, where the $H$-matrices are expressed by the $G$-matrices. Using this result, we obtain the duality of the $F$-matrices.
In Section~\ref{section4}, we consider the principal extension of the exchange matrices and present some properties. We show that these properties give alternative derivations of some equalities in previous sections.

\section{Final-seed mutations}\label{section2}
\subsection{Seed mutations and cluster patterns}\label{section2.1}
We start with recalling the definitions of the seed mutations and the cluster patterns according to \cite{fziv}. See \cite{fziv} for more information. A \emph{semifield} $\mathbb P$ is an abelian multiplicative group equipped with an addition $\oplus$ which is distributive over the multiplication. We particularly make use of the following two semifields.

Let $\mathbb Q_{\text{sf}}(u_1,\dots,u_{\ell})$ be a set of rational functions in $u_1,\dots,u_{\ell}$ which have subtraction-free expressions. Then, $\mathbb Q_{\text{sf}}(u_1,u_2,\dots,u_{\ell})$ is a semifield by the usual multiplication and addition. We call it the \emph{universal semifield} of $u_1,\dots,u_{\ell}$ \cite[Definition~2.1]{fziv}.

Let Trop$(u_1,\dots, u_\ell)$ be the abelian multiplicative group freely generated by the elements $u_1,\dots,u_\ell$. Then, $\text{Trop}(u_1,u_2,\dots,u_{\ell})$ is a semifield by the following addition:
\begin{gather*}
\prod_{j=1}^\ell u_j^{a_j} \oplus \prod_{j=1}^{\ell} u_j^{b_j}=\prod_{j=1}^{\ell} u_j^{\min(a_j,b_j)}.
\end{gather*}
We call it the \emph{tropical semifield} of $u_1,\dots,u_\ell$ \cite[Definition~2.2]{fziv}. We note that the universal semifield $\mathbb Q_{\text{sf}}(u_1,\dots,u_{\ell})$ has the following universality property.
For any semifield $\PP$ and $p_1, \dots, p_{\ell}\in\PP$, there exists an unique semifield homomorphism $\pi$ such that
\begin{align*}
	\pi\colon \ \QQsf(y_1, \dots, y_{\ell}) &\longrightarrow \PP,\nonumber\\
	y_i &\longmapsto p_i. 
\end{align*}
For $F(y_1,\dots,y_\ell ) \in \QQsf(y_1, \dots, y_{\ell})$, we denote{\samepage
\begin{gather*}
	F|_{\PP}(p_1, \dots, p_{\ell}):=\pi(F(y_1, \dots, y_\ell))
\end{gather*}
and we call it the \emph{evaluation} of $F$ at $p_1, \dots, p_{\ell}$.}

We fix a positive integer $n$ and a semifield $\PP$. Let $\mathbb{ZP}$ be the group ring of $\mathbb{P}$ as a multiplicative group. Since $\mathbb{ZP}$ is a domain \cite[Section~5]{fzi}, its total quotient ring is a field $\mathbb{Q}(\mathbb P)$. Let $\mathcal{F}$ be the field of the rational functions in $n$ indeterminates with coefficients in $\mathbb{Q}(\mathbb P)$.

A \emph{labeled seed with coefficients in $\PP$} is a triplet $(\mathbf{x}, \mathbf{y}, B)$, where
\begin{itemize}\itemsep=0pt
\item $\mathbf{x}=(x_1, \dots, x_n)$ is an $n$-tuple of elements of~$\mathcal F$ forming a free generating set of~$\mathcal F$.
\item $\mathbf{y}=(y_1, \dots, y_n)$ is an $n$-tuple of elements of~$\mathbb{P}$.
\item $B=(b_{ij})$ is an $n \times n$ integer matrix which is \emph{skew-symmetrizable}, that is, there exists a positive diagonal matrix $D$ such that $DB$ is skew-symmetric. We call $D$ a \emph{skew-symmetrizer} of $B$.
\end{itemize}
We refer to $x_i$, $y_i$ and $B$ as the \emph{cluster variables}, the \emph{coefficients} and the \emph{exchange matrix}, respectively.

Throughout the paper, for an integer $b$, we use the notation $[b]_+=\max(b,0)$. We note that
\begin{gather}\label{eq:b--b}
b=[b]_+-[-b]_+.
\end{gather}
Let $(\mathbf{x}, \mathbf{y}, B)$ be a labeled seed with coefficients in $\PP$, and let $k \in\{1,\dots, n\}$. The \emph{seed mutation~$\mu_k$ in direction $k$} transforms $(\mathbf{x}, \mathbf{y}, B)$ into another labeled seed $\mu_k(\mathbf{x}, \mathbf{y}, B)=(\mathbf{x'}, \mathbf{y'}, B')$ defined as follows:
\begin{itemize}\itemsep=0pt
\item The entries of $B'=(b'_{ij})$ are given
\begin{gather} \label{eq:matrix-mutation}
b'_{ij}=\begin{cases}-b_{ij} &\text{if $i=k$ or $j=k$,} \\
b_{ij}+\left[ b_{ik}\right] _{+}b_{kj}+b_{ik}\left[ -b_{kj}\right]_+ &\text{otherwise.}
\end{cases}
\end{gather}
\item The coefficients $\mathbf{y'}=(y'_1, \dots, y'_n)$ are given
\begin{gather}\label{eq:y-mutation}
y'_j=
\begin{cases}
y_{k}^{-1} &\text{if $j=k$,} \\
y_j y_k^{[b_{kj}]_+}(y_k \oplus 1)^{-b_{kj}} &\text{otherwise.}
\end{cases}
\end{gather}
\item The cluster variables $\mathbf{x'}=(x'_1, \dots, x'_n)$ are given
\begin{gather}\label{eq:x-mutation}
x'_j=\begin{cases}\dfrac{y_k\mathop{\prod}\limits_{i=1}^{n} x_i^{[b_{ik}]_+}+\mathop{\prod}\limits_{i=1}^{n} x_i^{[-b_{ik}]_+}}{(y_k\oplus 1)x_k} &\text{if $j=k$,}\\
x_j &\text{otherwise.}
\end{cases}
\end{gather}
\end{itemize}

We remark that \eqref{eq:y-mutation} can be also expressed as follows:
\begin{gather}\label{eq:y-mutation2}
y'_j=
\begin{cases}
y_{k}^{-1} &\text{if $j=k$,} \\
y_j y_k^{[-b_{kj}]_+}\big({y_k}^{-1} \oplus 1\big)^{-b_{kj}} &\text{otherwise.}
\end{cases}
\end{gather}

Let $\mathbb{T}_n$ be the \emph{$n$-regular tree} whose edges are labeled by the numbers $1, \dots, n$ such that the~$n$ edges emanating from each vertex have different labels. We write
\begin{xy}(0,1)*+{t}="A",(10,1)*+{t'}="B",\ar@{-}^k"A";"B" \end{xy}
to indicate that vertices $t,t'\in \mathbb{T}_n$ are joined by an edge labeled by $k$. We fix an arbitrary vertex $t_0\in \TT_n$, which is called the \emph{initial vertex}.

A \emph{cluster pattern with coefficients in $\PP$} is an assignment of a labeled seed $\Sigma_t=(\mathbf{x}_t, \mathbf{y}_t,B_t)$ with coefficients in $\PP$ to every vertex $t\in \mathbb{T}_n$ such that the seeds $\Sigma_t$ and $\Sigma_{t'}$ assigned to the endpoints of any edge
\begin{xy}(0,1)*+{t}="A",(10,1)*+{t'}="B",\ar@{-}^k"A";"B" \end{xy}
are obtained from each other by the seed mutation in direction~$k$. The elements of $\Sigma_t$ are denoted as follows:
\begin{gather} \label{den:seed_at_t}
\mathbf{x}_t=(x_{1;t},\dots,x_{n;t}),\qquad \mathbf{y}_t=(y_{1;t},\dots,y_{n;t}),\qquad B_t=(b_{ij;t}).
\end{gather}
In particular, at $t_0$, we denote
\begin{gather} \label{initialseed}
\mathbf{x}=\mathbf{x}_{t_0}=(x_1,\dots,x_n),\qquad \mathbf{y}=\mathbf{y}_{t_0}=(y_1,\dots,y_n),\qquad B=B_{t_0}=(b_{ij}).
\end{gather}
We say that $n$ is \emph{rank} and the $\PP$ is \emph{coefficient semifield} of the cluster patterns.

If $\mathbb{P}=\text{Trop}(y_1,\dots,y_n)$ and $\mathbf{y}_{t_0}=(y_1,\dots,y_n)$, we say that a cluster pattern $\{(\mathbf{x}_t, \mathbf{y}_t,B_t)\}_{t\in\TT_n}$ has \emph{principal coefficients} at the initial vertex $t_0$.

\subsection[Final-seed mutations without sign-coherence of $C$-matrices]{Final-seed mutations without sign-coherence of $\boldsymbol{C}$-matrices}\label{section2.2}

In this subsection, we will define the $C$-matrices, the $G$-matrices and the $F$-polynomials fol\-lo\-wing~\cite{fziv}.
We also introduce the $F$-matrices which are new.

Throughout this paper, we use the following notations \cite{nz}. Let $J_{\ell}$ denote the $n\times n$ diagonal matrix obtained from the identity matrix $I_n$ by replacing the $(\ell, \ell)$ entry with $-1$.
For a $n\times n$ matrix $B=(b_{ij})$, let $[B]_+$ be the matrix obtained from $B$ by replacing every entry~$b_{ij}$ with~$[b_{ij}]_+$.
Also, let $B^{k\bullet}$ be the matrix obtained from $B$ by replacing all entries outside of the $k$th row with zeros. Similarly, let $B^{\bullet k}$ be the matrix replacing all entries outside of the $k$th column.
Note that the maps $B\mapsto [B]_+$ and $B\mapsto B^{k\bullet}$ commute with each other, and the same is true of $B\mapsto [B]_+$ and $B\mapsto B^{\bullet k}$, so that the notations $[B]^{k\bullet}_+$ and $[B]^{\bullet k}_+$ make sense. Also, we have $AB^{\bullet k}=(AB)^{\bullet k}$ and $A^{k\bullet}B=(AB)^{k\bullet}$.

First, we recall the recursive definitions of the $C$-matrices, the $G$-matrices and the $F$-polynomials, following~\cite{fziv}. For any initial exchange matrix $B$ at $t_0$, let $\{B_t\}_{t\in\TT_n}$ be the family of the exchange matrices in~\eqref{den:seed_at_t} with~$B_{t_0}=B$.

\begin{Definition}[{\cite[equation~(5.9), Proposition~6.6]{fziv}}]
Let $B$ be any initial exchange matrix at~$t_0$. Then, the families of $n\times n$ integer matrices $\big\{C_t^{B;t_0}\big\}_{t\in\TT_n}$ and $\big\{G_t^{B;t_0}\big\}_{t\in \TT_n}$ are recursively defined as follows:
\begin{itemize}\itemsep=0pt
\item[(i)] We set the initial condition,
\begin{gather*}
C^{B; t_0}_{t_0}=I_n,
\end{gather*}
and for any edge \begin{xy}(0,1)*+{t}="A",(10,1)*+{t'}="B",\ar@{-}^{\ell}"A";"B" \end{xy} in $\TT_n$ {and $\varepsilon\in\{\pm1\}$}, we set the recurrence relation,
\begin{gather}\label{eq:c-frontmutation}
C^{B; t_0}_{t'}=C^{B; t_0}_t\big(J_\ell+[\varepsilon B_t]^{\ell\bullet}_+\big)+\big[{-}\varepsilon C^{B; t_0}_t\big]^{\bullet \ell}_+ B_t.
\end{gather}
(In \eqref{eq:c-frontmutation}, it does not make any difference whichever we choose, $\varepsilon=1$ or $\varepsilon=-1$. See Remark~\ref{chooseep}.)

\item[(ii)] We set the initial condition,
\begin{gather*}
G^{B; t_0}_{t_0}=I_n,
\end{gather*}
and for any edge \begin{xy}(0,1)*+{t}="A",(10,1)*+{t'}="B",\ar@{-}^{\ell}"A";"B" \end{xy} in $\TT_n$ and $\varepsilon\in\{\pm1\}$, we set the recurrence relation,
\begin{gather}\label{eq:g-frontmutation}
 G^{B; t_0}_{t'}=G^{B; t_0}_t\big(J_\ell+[\varepsilon B_t]^{\bullet \ell}_+\big)-B\big[\varepsilon C^{B; t_0}_t\big]^{\bullet \ell}_+.
\end{gather}
\end{itemize}
The matrices $C^{B; t_0}_t$ and $G^{B; t_0}_t$ are called the \emph{$C$-matrix} and the \emph{$G$-matrix} at~$t$.
\end{Definition}
\begin{Remark}\label{chooseep} Because of \eqref{eq:b--b}, the right hand side of~\eqref{eq:c-frontmutation} does not depend on $\varepsilon$.
Meanwhile, the right hand side of~\eqref{eq:g-frontmutation} does not depend on $\varepsilon$ due to the following equality \cite[equation~(6.14)]{fziv}:
\begin{gather} \label{GB=BC}
G^{B; t_0}_tB_t=BC^{B; t_0}_t.
\end{gather}
We have two different expressions for the recursion \eqref{eq:c-frontmutation} and \eqref{eq:g-frontmutation} because they will be useful in different situations.
\end{Remark}

\begin{Definition} [{\cite[Proposition 5.1]{fziv}}] \label{Frec}
Let $B$ be any initial exchange matrix at $t_0$ and $\mathbf{y}=(y_1,\dots,y_n)$ be formal variables.
Then, the family of polynomials $F^{B; t_0}_{j; t}(\mathbf{y})\in \ZZ[y_1, \dots, y_n]$ indexed by $j\in \{1, \dots, n\}$ and $t\in \TT_n$ are recursively defined as follows: We set the initial condition,
\begin{gather}
F^{B; t_0}_{j; t_0}(\mathbf{y}) = 1, \qquad j = 1, \dots, n,\label{eq:f-initialcondition}
\end{gather}
and for any edge \begin{xy}(0,1)*+{t}="A",(10,1)*+{t'}="B",\ar@{-}^{\ell}"A";"B" \end{xy} in $\TT_n$, we set the recurrence relation,
\begin{gather}\label{eq:f-frontmutation}
	F^{B; t_0}_{j; t'}(\mathbf{y})=\begin{cases} \displaystyle
F^{B; t_0}_{\ell; t}(\mathbf{y})^{-1}\left(\prod_{i = 1}^n y_i^{[c^{B; t_0}_{i\ell; t}]_+} \prod_{i = 1}^n F^{B;t_0}_{i;t}(\mathbf{y})^{[b_{i\ell; t}]_+}\right. \\
\qquad\left.\displaystyle{} + \prod_{i = 1}^n y_i^{[-c^{B; t_0}_{i\ell; t}]_+} \prod_{i = 1}^n F^{B;t_0}_{i;t}(\mathbf{y})^{[-b_{i\ell; t}]_+}\right) \quad &\text{if} \ j = \ell,\\
F^{B; t_0}_{j, t}(\mathbf{y}) & \text{if} \ j \neq \ell,
\end{cases}
\end{gather}
where $c^{B; t_0}_{i\ell; t}$ is the $(i, \ell)$ entry of $C^{B; t_0}_t$ and $b_{i\ell; t}$ is the $(i, \ell)$ entry of $B_t$.
The polynomials $F^{B; t_0}_{j,t}(\yy)$ are called the \emph{$F$-polynomials} at $t$.
\end{Definition}
The fact that seemingly rational functions $F^{B; t_0}_{j; t} (\yy)$ are polynomials follows from the Laurent phenomenon of cluster variables \cite[Proposition~3.6]{fziv}.
We also remark that for any $j\in\{1, \dots, n\}$ and $t\in \TT_n$, $F^{B; t_0}_{j; t} (\yy)$ is an element of $\QQsf(y_1, \dots, y_n)$ because of~\eqref{eq:f-initialcondition} and~\eqref{eq:f-frontmutation}.

In this paper, we refer to the recurrence relations (``mutations'') \eqref{eq:c-frontmutation}, \eqref{eq:g-frontmutation} and \eqref{eq:f-frontmutation} as the \emph{final-seed mutations} in contrast to the initial-seed mutations appearing later. Abusing of notation, we denote $C_{t'}^{B;{t_0}}=\mu_\ell\big(C_{t}^{B;{t_0}}\big)$, $G_{t'}^{B;{t_0}}=\mu_\ell\big(G_{t}^{B;{t_0}}\big)$, and $F_{j:t'}^{B;{t_0}}(\yy)=\mu_\ell\big(F_{j;t}^{B;{t_0}}(\yy)\big) $.

The $C$-matrices, the $G$-matrices, and the $F$-polynomials are important because they factorize cluster variables and coefficients by the following formulas:

\begin{Proposition}[separation formulas {\cite[Proposition 3.13, Corollary 6.3]{fziv}}]\label{pr:separation}
Let $\{\Sigma_t\}_{t\in\TT_n}$ be a cluster pattern with coefficient in~$\PP$ with the initial seed~\eqref{initialseed}. Then, for any $t \in \TT_n$ and $j \in \{1, \dots, n\}$, we have
\begin{gather}\label{eq:xjt=F/F}
x_{j;t} = \left( \prod_{k=1}^n x_k^{g^{B; t_0}_{kj;t}} \right) \frac{F_{j;t}^{B;t_0}|_\Fcal(\hat y_1, \dots, \hat y_n)}{F_{j;t}^{B;t_0}|_\PP (y_1, \dots, y_n)}, \\
\label{eq:Y-F}
y_{j;t}=\prod_{k=1}^n y_{k}^{c^{B; t_0}_{kj;t}}\prod_{k=1}^n \big(F_{k;t}^{B;t_0}|_{\mathbb{P}}(y_{1}, \dots, y_{n})\big)^{b_{kj;t}},
\end{gather}
where
\begin{gather*}
\hat y_i = y_i \mathop{\prod}\limits_{j=1}^n x_j^{b_{ji}},
\end{gather*}
and $g_{ij;t}^{B;t_0}$ and $c_{ij;t}^{B;t_0}$ are the $(i,j)$ entry of $G_t^{B;t_0}$ and $C_t^{B;t_0}$, respectively. Also, the rational function $F_{j;t}^{B;t_0}|_\Fcal(\hat y_1, \dots, \hat y_n)$ is the element of $\Fcal$ obtained by substituting $\hat{y}_i$ for $y_i$ in $F_{j;t}^{B;t_0}(y_1, \dots, y_n)$.
\end{Proposition}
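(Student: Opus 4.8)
The plan is to prove the two identities \eqref{eq:xjt=F/F} and \eqref{eq:Y-F} by induction on the distance $d(t_0,t)$ from the initial vertex in $\TT_n$, establishing the coefficient formula \eqref{eq:Y-F} first and then using it to derive the cluster-variable formula \eqref{eq:xjt=F/F}. At $t=t_0$ both sides reduce to the initial data: since $C^{B;t_0}_{t_0}=G^{B;t_0}_{t_0}=I_n$ and $F^{B;t_0}_{j;t_0}=1$ by \eqref{eq:f-initialcondition}, the right-hand sides collapse to $x_j$ and $y_j$, matching \eqref{initialseed}. For the inductive step, let $t'$ be joined to $t$ by an edge labelled $\ell$ and assume both formulas hold at $t$. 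In each of the two formulas one case ($j=\ell$ for coefficients, $j\neq\ell$ for cluster variables) is immediate, and the complementary case carries the content.

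For \eqref{eq:Y-F} the easy case is $j=\ell$: reading off the $\ell$-th column of \eqref{eq:c-frontmutation} gives $c^{B;t_0}_{k\ell;t'}=-c^{B;t_0}_{k\ell;t}$ (the second term of \eqref{eq:c-frontmutation} drops out because its only nonzero column is met by the factor $b_{\ell\ell;t}=0$), while \eqref{eq:matrix-mutation} gives $b_{k\ell;t'}=-b_{k\ell;t}$ and the mutated polynomial $F^{B;t_0}_{\ell;t'}$ enters only with exponent $b_{\ell\ell;t'}=0$; combined with $y_{\ell;t'}=y_{\ell;t}^{-1}$ this reproduces the inductive hypothesis. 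The substantial case is $j\neq\ell$, where \eqref{eq:y-mutation} produces the factor $(y_{\ell;t}\oplus1)^{-b_{\ell j;t}}$. The key is to derive, from the $F$-recurrence \eqref{eq:f-frontmutation} evaluated in $\PP$ together with the inductive hypothesis and the splitting \eqref{eq:b--b}, the identity
\[
y_{\ell;t}\oplus 1=\frac{F^{B;t_0}_{\ell;t'}\big|_\PP(\yy)\,F^{B;t_0}_{\ell;t}\big|_\PP(\yy)}{\prod_{k}y_k^{[-c^{B;t_0}_{k\ell;t}]_+}\prod_k\big(F^{B;t_0}_{k;t}\big|_\PP(\yy)\big)^{[-b_{k\ell;t}]_+}},
\]
which follows because writing $y_{\ell;t}=P/Q$ with $P,Q$ the two subtraction-free monomials coming from the inductive hypothesis gives $y_{\ell;t}\oplus1=(P\oplus Q)/Q$, and $P\oplus Q=F^{B;t_0}_{\ell;t'}|_\PP F^{B;t_0}_{\ell;t}|_\PP$ is precisely the $\PP$-evaluation of \eqref{eq:f-frontmutation}. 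Substituting this and the inductive hypotheses for $y_{j;t}$ and $y_{\ell;t}$ into \eqref{eq:y-mutation}, the remaining verification is bookkeeping of exponents using the $j$-th columns of \eqref{eq:c-frontmutation} and \eqref{eq:matrix-mutation}; the appearance of $F^{B;t_0}_{\ell;t'}$ in the displayed identity is exactly what produces the mutated polynomial on the right of \eqref{eq:Y-F}.

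For \eqref{eq:xjt=F/F} the immediate case is $j\neq\ell$: the cluster variable, the $F$-polynomial, and (as one checks from \eqref{eq:g-frontmutation}, whose two correction terms alter only the $\ell$-th column) the $j$-th column of the $G$-matrix are all unchanged, so the formula at $t'$ coincides with the one at $t$. The crux is $j=\ell$. Here I would substitute the inductive hypotheses for the $x_{i;t}$, the coefficient formula \eqref{eq:Y-F} for $y_{\ell;t}$, and the identity above for $y_{\ell;t}\oplus1$ into the exchange relation \eqref{eq:x-mutation}, then reorganise into the shape $\big(\prod_k x_k^{g^{B;t_0}_{k\ell;t'}}\big)F^{B;t_0}_{\ell;t'}|_\Fcal(\hat y)\big/F^{B;t_0}_{\ell;t'}|_\PP(\yy)$. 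The decisive tool is \eqref{GB=BC}, $G^{B;t_0}_tB_t=BC^{B;t_0}_t$: since $\hat y_i=y_i\prod_j x_j^{b_{ji}}$, a Laurent monomial in the $\hat y$ contributes to the $x$-monomial prefactor with exponents given by $B$ applied to a $c$-vector, and \eqref{GB=BC} rewrites these as the $G$-entries dictated by the $\ell$-th column of \eqref{eq:g-frontmutation}.

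The main obstacle is this last computation. One must show that the ordinary sum in the numerator of \eqref{eq:x-mutation}, taken in the field $\Fcal$, matches monomial by monomial the evaluation $F^{B;t_0}_{\ell;t'}|_\Fcal(\hat y)$ coming from \eqref{eq:f-frontmutation}, while the denominators $(y_{\ell;t}\oplus1)$ and $F^{B;t_0}_{\ell;t}|_\PP$ conspire to leave exactly $F^{B;t_0}_{\ell;t'}|_\PP$. Keeping the $\Fcal$-additions (numerator) and the $\PP$-additions ($\oplus$, denominator) cleanly separated throughout, and converting $G$- into $C$-exponents via \eqref{GB=BC}, is the delicate part; the distributivity and cancellation used to derive the displayed identity must be invoked with care, since $\PP$ is only a semifield.
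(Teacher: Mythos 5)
The paper does not prove this proposition at all: it is imported verbatim from Fomin--Zelevinsky~IV (Proposition~3.13 and Corollary~6.3), so there is no in-paper argument to compare against. Your blind reconstruction is essentially the proof from that source, and it is sound. The base case is trivial, the ``easy'' cases are correctly identified (for $j=\ell$ in \eqref{eq:Y-F} the second term of \eqref{eq:c-frontmutation} and the exponent of $F^{B;t_0}_{\ell;t'}$ both vanish because $b_{\ell\ell;t}=0$; for $j\neq\ell$ in \eqref{eq:xjt=F/F} both correction terms of \eqref{eq:g-frontmutation} live only in the $\ell$th column), and your key identity for $y_{\ell;t}\oplus 1$ is correct: it is exactly the $\PP$-evaluation of \eqref{eq:f-frontmutation} combined with the semifield law $(P/Q)\oplus 1=(P\oplus Q)/Q$. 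The computation you defer as ``the main obstacle'' does close as you describe: substituting $x_{i;t}=\big(\prod_k x_k^{g_{ki;t}}\big)F_{i;t}|_\Fcal(\hat y)/F_{i;t}|_\PP$ into the numerator of \eqref{eq:x-mutation}, using the inductive \eqref{eq:Y-F} for $y_{\ell;t}$ and \eqref{eq:b--b} pulls out a common factor so that the bracket becomes $\prod_k\hat y_k^{-[-c_{k\ell;t}]_+}F_{\ell;t'}|_\Fcal(\hat y)F_{\ell;t}|_\Fcal(\hat y)$ times $x$-powers, where \eqref{GB=BC} is used precisely to trade $\prod_k y_k^{c_{k\ell;t}}$ for $\prod_k\hat y_k^{c_{k\ell;t}}$ at the cost of the $x$-exponent $-G_tB_t^{\bullet\ell}$; after dividing by $(y_{\ell;t}\oplus1)x_{\ell;t}$ the surviving $x$-exponent is $-G_t^{\bullet\ell}+G_t[-B_t]_+^{\bullet\ell}-B\big[{-}C_t^{B;t_0}\big]_+^{\bullet\ell}$, which is the $\ell$th column of \eqref{eq:g-frontmutation} at $\varepsilon=-1$, and the denominator collapses to $F_{\ell;t'}|_\PP$. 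Note also that there is no circularity in invoking \eqref{GB=BC}, since that identity follows by its own elementary induction directly from \eqref{eq:c-frontmutation} and \eqref{eq:g-frontmutation}. The only criticism is that the decisive $j=\ell$ step of \eqref{eq:xjt=F/F} is described rather than executed; as written it is a correct and complete plan rather than a finished proof.
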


The following fact is well-known:
\begin{Proposition} \label{pr:F=1,coefc}\quad
\begin{enumerate}\itemsep=0pt
\item[$(1)$] {\rm \cite[equation (5.5)]{fziv}} For any $j \in\{1, \dots, n\}$ and $t\in \TT_n$, we have
\begin{gather}
\label{eq:F=1} F_{j;t}^{B;t_0}|_{\text{\rm Trop}(y_1, \dots, y_n)}(y_1, \dots, y_n)=1.
\end{gather}
\item[$(2)$] {\rm \cite[equation (2.13)]{fziv}} Let $\{\Sigma_t\}_{t\in\TT_n}$ be a cluster pattern which has principal coefficients at $t_0$.
Then, for any $j \in\{1, \dots, n\}$ and $t\in \TT_n$, we have
\begin{gather}
\label{eq:c-coef} y_{j;t}=\prod_{k=1}^n y_k^{c^{B; t_0}_{kj;t}}.
\end{gather}
\end{enumerate}
\end{Proposition}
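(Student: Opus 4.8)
The plan is to prove part~(1) first, by induction on the graph distance $d(t_0,t)$ in $\TT_n$, and then to deduce part~(2) from it. The only ingredients are the recursion \eqref{eq:f-initialcondition}--\eqref{eq:f-frontmutation} for the $F$-polynomials, the definition of tropical addition, and the elementary identity $\min\big([c]_+,[-c]_+\big)=0$, valid for every integer $c$ since at most one of $[c]_+$, $[-c]_+$ is nonzero.

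For part~(1), the first observation is that the evaluation $F\mapsto F|_{\trop(y_1,\dots,y_n)}(y_1,\dots,y_n)$ is exactly the semifield homomorphism $\pi$ sending each $y_i$ to the generator $y_i$; being a homomorphism of semifields, it carries the products and the sum appearing in \eqref{eq:f-frontmutation} to products and the tropical sum $\oplus$ in $\trop(y_1,\dots,y_n)$. Hence the tropicalised polynomials $\overline F_{j;t}:=F^{B;t_0}_{j;t}|_{\trop}$ obey the same recursion with $+$ replaced by $\oplus$. The base case follows from \eqref{eq:f-initialcondition}, giving $\overline F_{j;t_0}=1$. For the inductive step along an edge $t\overset{\ell}{-}t'$ with $t$ the vertex closer to $t_0$, the case $j\neq\ell$ is trivial since then $\overline F_{j;t'}=\overline F_{j;t}=1$. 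For $j=\ell$, the hypothesis $\overline F_{i;t}=1$ collapses each product $\prod_i\overline F_{i;t}^{[\pm b_{i\ell;t}]_+}$ to $1$, so the bracket in \eqref{eq:f-frontmutation} tropicalises to
\begin{gather*}
\prod_{i=1}^n y_i^{[c^{B;t_0}_{i\ell;t}]_+}\oplus\prod_{i=1}^n y_i^{[-c^{B;t_0}_{i\ell;t}]_+}=\prod_{i=1}^n y_i^{\min([c^{B;t_0}_{i\ell;t}]_+,\,[-c^{B;t_0}_{i\ell;t}]_+)}=\prod_{i=1}^n y_i^{0}=1,
\end{gather*}
and as $\overline F_{\ell;t}^{-1}=1$ as well, this yields $\overline F_{\ell;t'}=1$, proving \eqref{eq:F=1}.

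For part~(2) I would give a direct argument showing that, in the principal case, the tropical coefficients carry precisely the $C$-matrix recursion. Writing $y_{j;t}=\prod_k y_k^{\tilde c_{kj;t}}$ in $\trop(y_1,\dots,y_n)$ defines integers $\tilde c_{kj;t}$ with $\tilde c_{kj;t_0}=\delta_{kj}$, matching $C^{B;t_0}_{t_0}=I_n$. Tropicalising \eqref{eq:y-mutation} along an edge $t\overset{\ell}{-}t'$, and using $y_{\ell;t}\oplus 1=\prod_k y_k^{\min(\tilde c_{k\ell;t},0)}=\prod_k y_k^{-[-\tilde c_{k\ell;t}]_+}$, gives $\tilde c_{k\ell;t'}=-\tilde c_{k\ell;t}$ on the $\ell$th column and
\begin{gather*}
\tilde c_{kj;t'}=\tilde c_{kj;t}+[b_{\ell j;t}]_+\,\tilde c_{k\ell;t}+b_{\ell j;t}\,[-\tilde c_{k\ell;t}]_+\qquad(j\neq\ell)
\end{gather*}
otherwise. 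An entrywise comparison shows this is exactly \eqref{eq:c-frontmutation} with $\varepsilon=1$, so $\tilde c$ and $C^{B;t_0}$ solve the same recursion from the same initial datum; since $\TT_n$ is a tree, this determines both uniquely, whence $\tilde c_{kj;t}=c^{B;t_0}_{kj;t}$ and \eqref{eq:c-coef} follows. (Alternatively, part~(2) is immediate from the separation formula \eqref{eq:Y-F}: in the principal case $\PP=\trop(y_1,\dots,y_n)$, part~(1) makes every factor $F^{B;t_0}_{k;t}|_\PP$ equal to~$1$, leaving $y_{j;t}=\prod_k y_k^{c^{B;t_0}_{kj;t}}$.)

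The genuinely essential point is the collapse in part~(1): the $\ell$th columns $[C^{B;t_0}_t]^{\bullet\ell}_+$ and $[-C^{B;t_0}_t]^{\bullet\ell}_+$ have coordinatewise disjoint support, so their tropical sum (the coordinatewise minimum) vanishes; everything else is bookkeeping. For part~(2) the only mild obstacle is the faithful tropicalisation of \eqref{eq:y-mutation}, in particular rewriting $(y_{\ell;t}\oplus 1)^{-b_{\ell j;t}}$ as $\prod_k y_k^{b_{\ell j;t}[-\tilde c_{k\ell;t}]_+}$ and checking that it reproduces the $[-\varepsilon C^{B;t_0}_t]^{\bullet\ell}_+B_t$ term of \eqref{eq:c-frontmutation}.
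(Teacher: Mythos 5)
Your proof is correct. Part~(1) is essentially the paper's argument: the authors likewise observe that the tropicalization homomorphism $\pi\colon \QQsf(y_1,\dots,y_n)\to\trop(y_1,\dots,y_n)$ carries the recursion \eqref{eq:f-initialcondition}--\eqref{eq:f-frontmutation} to the tropical side, where everything collapses to $1$; they state this in one line, while you supply the key computation $\min\big([c]_+,[-c]_+\big)=0$ that makes the two monomials $\prod_i y_i^{[c_{i\ell;t}]_+}$ and $\prod_i y_i^{[-c_{i\ell;t}]_+}$ tropically sum to $1$. For part~(2) your primary route differs from the paper's: the paper simply sets $\PP=\trop(y_1,\dots,y_n)$ in the separation formula \eqref{eq:Y-F} and invokes \eqref{eq:F=1} to kill the $F$-polynomial factors (this is exactly your parenthetical alternative), whereas your main argument tropicalizes the coefficient mutation \eqref{eq:y-mutation} directly and checks entrywise that the resulting exponent matrices satisfy the $C$-matrix recursion \eqref{eq:c-frontmutation} with $\varepsilon=1$ from the initial datum $I_n$, hence coincide with $C^{B;t_0}_t$ by uniqueness on the tree. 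Your entrywise comparison is accurate (including the sign bookkeeping $y_{\ell;t}\oplus 1=\prod_k y_k^{-[-\tilde c_{k\ell;t}]_+}$ and the vanishing of the diagonal of $B_t$ in the $j=\ell$ column). The direct route has the virtue of not relying on the separation formula of Proposition~\ref{pr:separation} and of exhibiting \eqref{eq:c-frontmutation} as literally the tropicalization of \eqref{eq:y-mutation}; the paper's route is shorter given that \eqref{eq:Y-F} is already available. Both are valid, and you in fact supply both.
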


\begin{proof}Because of \eqref{eq:f-initialcondition} and \eqref{eq:f-frontmutation}, by using the tropicalization
\begin{align*}
	\pi\colon \ \QQsf(y_1, \dots, y_n) &\longrightarrow \text{Trop}(y_1, \dots, y_n),\\
	y_i &\longmapsto y_i,
\end{align*}
we have \eqref{eq:F=1}.
Moreover, setting $\PP=\text{Trop}(y_1, \dots, y_n)$ in \eqref{eq:Y-F}, we obtain \eqref{eq:c-coef} by \eqref{eq:F=1}.
\end{proof}

Next, we introduce another family of matrices, which are the ``degree matrices'' of the $F$-polynomials.

\begin{Definition}Let $B$ be any initial exchange matrix at $t_0$. For $i\in\{1, \dots, n\}$ and $t\in\TT_n$, let $f^{B; t_0}_{1i;t},\dots, f^{B; t_0}_{ni;t}$ be the maximal degrees of $y_1,\dots,y_n$ in the $i$th $F$-polynomial $F^{B; t_0}_{i;t}(y_1,\dots,y_n)$, respectively. We call the nonnegative integer vectors $\mathbf{f}^{B;t_0}_{i;t}=\left[\begin{smallmatrix}
f_{1i;t}^{B;t_0} \\
\vdots\\
f_{ni;t}^{B;t_0}
\end{smallmatrix}\right]$ the \emph{$\ff$-vectors} at $t$.
We also call the nonnegative integer $n\times n$ matrix $F_{t}^{B;t_0}$ with columns $\ff_{1;t}^{B;t_0}, \dots, \ff_{n;t}^{B;t_0}$ the \emph{$F$-matrix} at $t$.
\end{Definition}
To avoid confusion the notation of $F$-matrices and $F$-polynomials, when we write $F$-po\-ly\-no\-mials, we always write it with arguments.

We have the following description of the $F$-matrices: Consider a semifield homomorphism
\begin{align*}
	\pi\colon \ \QQsf(y_1, \dots, y_n) &\longrightarrow \text{Trop}\big(y_1^{-1}, \dots, y_n^{-1}\big),\\
	y_i &\longmapsto y_i.
\end{align*}
Then, we have
\begin{gather}
\pi\big(F^{B; t_0}_{\ell; t}(y_1,\dots,y_n)\big)=F^{B; t_0}_{\ell; t}|_{\text{Trop}\left(y_1^{-1}, \dots, y_n^{-1}\right)}(y_1, \dots, y_n) \nonumber\\
\hphantom{\pi\big(F^{B; t_0}_{\ell; t}(y_1,\dots,y_n)\big)}{} =\prod_{i=1}^n \big(y_i^{-1}\big)^{-f_{i\ell;t}^{B;t_0}}=\prod_{i=1}^n y_i^{f_{i\ell;t}^{B;t_0}}.\label{eq:tropicalmaximaldegree}
\end{gather}

The $F$-matrices are uniquely determined by the following recurrence relations:

\begin{Proposition} \label{pr:ffnc}
Let $B$ be any initial exchange matrix at $t_0$.
Then, the $F$-matrices have the following recurrence: The initial condition is
\begin{align} \label{eq:fmat-initialcondition}
F^{B; t_0}_{t_0}={O_n},
\end{align}
{where $O_n$ is the zero matrix}. For any edge \begin{xy}(0,1)*+{t}="A",(10,1)*+{t'}="B",\ar@{-}^{\ell}"A";"B" \end{xy} in $\TT_n$, we have the recurrence relation,
\begin{align}\label{eq:fmat-frontmutation}
	F^{B;t_0}_{t'}=F^{B;t_0}_t J_{\ell}+\max\big(\big[C^{B; t_0}_t\big]^{\bullet \ell}_+ +F^{B; t_0}_t[B_t]^{\bullet \ell}_+, \big[{-}C^{B; t_0}_t\big]^{\bullet \ell}_+ +F^{B; t_0}_t[-B_t]^{\bullet \ell}_+\big).
\end{align}
\end{Proposition}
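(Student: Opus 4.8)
The plan is to read off the maximal degrees directly from the $F$-polynomial mutation~\eqref{eq:f-frontmutation} by applying the tropicalization homomorphism
\[
\pi\colon \ \QQsf(y_1, \dots, y_n) \longrightarrow \trop\big(y_1^{-1}, \dots, y_n^{-1}\big), \qquad y_i \longmapsto y_i,
\]
which, through~\eqref{eq:tropicalmaximaldegree}, sends each $F$-polynomial to the monomial whose $y_j$-exponent is exactly the maximal degree $f_{ji;t}^{B;t_0}$. The initial condition~\eqref{eq:fmat-initialcondition} is immediate, since by~\eqref{eq:f-initialcondition} every $F_{j;t_0}^{B;t_0}(\yy)=1$ has all maximal degrees equal to zero; and once the recurrence~\eqref{eq:fmat-frontmutation} is established, uniqueness is automatic because every vertex of $\TT_n$ is joined to $t_0$ by a path.

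For the recurrence along an edge with label $\ell$ joining $t$ and $t'$, I would first check that the two matrices inside the $\max$ in~\eqref{eq:fmat-frontmutation} are supported in column $\ell$ alone. Indeed $\big[{\pm}C_t^{B;t_0}\big]^{\bullet \ell}_+$ vanishes outside column $\ell$ by definition, and so does $F_t^{B;t_0}[{\pm}B_t]^{\bullet \ell}_+=\big(F_t^{B;t_0}[{\pm}B_t]_+\big)^{\bullet \ell}$ by the identity $AB^{\bullet \ell}=(AB)^{\bullet \ell}$; since $\max(0,0)=0$, the entire $\max$ term is zero outside column $\ell$. For $j\neq\ell$ the polynomial is unchanged, $F_{j;t'}^{B;t_0}=F_{j;t}^{B;t_0}$, hence $\ff_{j;t'}^{B;t_0}=\ff_{j;t}^{B;t_0}$, which matches the right-hand side because $F_t^{B;t_0}J_\ell$ fixes every column except the $\ell$th. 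The whole statement thus reduces to column $\ell$.

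For that column I would apply $\pi$ to the $j=\ell$ branch of~\eqref{eq:f-frontmutation}. As $\pi$ is a semifield homomorphism it carries the ordinary sum to $\oplus$ and inversion to inversion, so by~\eqref{eq:tropicalmaximaldegree} (applied to each $F_{i;t}^{B;t_0}$) the two summands become the monomials $\prod_j y_j^{a_j}$ and $\prod_j y_j^{b_j}$, where
\[
a_j = \big[c_{j\ell;t}^{B;t_0}\big]_+ + \sum_{i=1}^n [b_{i\ell;t}]_+\, f_{ji;t}^{B;t_0}, \qquad b_j = \big[{-}c_{j\ell;t}^{B;t_0}\big]_+ + \sum_{i=1}^n [{-}b_{i\ell;t}]_+\, f_{ji;t}^{B;t_0}
\]
are, respectively, the $j$th entries of the $\ell$th columns of $\big[C_t^{B;t_0}\big]^{\bullet \ell}_+ + F_t^{B;t_0}[B_t]^{\bullet \ell}_+$ and of $\big[{-}C_t^{B;t_0}\big]^{\bullet \ell}_+ + F_t^{B;t_0}[{-}B_t]^{\bullet \ell}_+$. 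The key point is that in $\trop\big(y_1^{-1}, \dots, y_n^{-1}\big)$ one has $\prod_j y_j^{a_j} \oplus \prod_j y_j^{b_j} = \prod_j y_j^{\max(a_j,b_j)}$, because $\min(-a_j,-b_j)=-\max(a_j,b_j)$; meanwhile the factor $F_{\ell;t}^{B;t_0}(\yy)^{-1}$ tropicalizes to $\prod_j y_j^{-f_{j\ell;t}^{B;t_0}}$, which is exactly the $\ell$th column of $F_t^{B;t_0}J_\ell$. Comparing the $y_j$-exponent of $\pi\big(F_{\ell;t'}^{B;t_0}(\yy)\big)$, read off via~\eqref{eq:tropicalmaximaldegree}, then yields $f_{j\ell;t'}^{B;t_0}=-f_{j\ell;t}^{B;t_0}+\max(a_j,b_j)$, which is precisely the $\ell$th column of~\eqref{eq:fmat-frontmutation}.

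The one genuinely delicate point is the choice of target semifield: tropicalizing into $\trop\big(y_1^{-1}, \dots, y_n^{-1}\big)$ rather than $\trop(y_1, \dots, y_n)$ is what converts $\oplus$ into a \emph{maximum} of $y_i$-exponents and thereby produces the $\max$ in the recurrence. Everything else is bookkeeping, relying on the identity $AB^{\bullet \ell}=(AB)^{\bullet \ell}$ to guarantee that the columns other than $\ell$ decouple cleanly.
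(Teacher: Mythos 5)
Your proof is correct and follows exactly the paper's (much terser) argument: the paper also obtains \eqref{eq:fmat-initialcondition} from \eqref{eq:f-initialcondition} and derives \eqref{eq:fmat-frontmutation} by applying the tropicalization \eqref{eq:tropicalmaximaldegree} to the recurrence \eqref{eq:f-frontmutation}. Your write-up merely makes explicit the bookkeeping (the decoupling of columns $j\neq\ell$ and the conversion of $\oplus$ in $\trop\big(y_1^{-1},\dots,y_n^{-1}\big)$ into a maximum of exponents) that the paper leaves to the reader.
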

\begin{proof}
Clearly, \eqref{eq:fmat-initialcondition} is obtained by \eqref{eq:f-initialcondition}.
Moreover, applying \eqref{eq:tropicalmaximaldegree} to \eqref{eq:f-frontmutation}, we have \eqref{eq:fmat-frontmutation}.
\end{proof}

We call the recurrence relation~\eqref{eq:fmat-frontmutation} the \emph{final-seed mutations} for the $F$-matrices. As with the $C$-, $G$-matrices and the $F$-polynomials, we denote $F^{B; t_0}_{t'}=\mu_\ell\big(F^{B; t_0}_t\big)$.

Under the exchange of the initial exchange matrices $B$ and $-B$ at $t_0$,
we have the simple relations between the $C$-, $G$- and $F$-matrices.

\begin{Theorem} \label{thm:C=C+FB}
We have the following relations:
\begin{gather} \label{eq:c-}
C^{-B; t_0}_t=C^{B; t_0}_t+F^{B; t_0}_tB_t,\\
\label{eq:G=G+BF}
G_t^{-B;t_0}=G_t^{B;t_0}+BF_t^{B;t_0},\\
\label{eq:F=F}
F_t^{-B;t_0}=F_t^{B;t_0}.
\end{gather}
\end{Theorem}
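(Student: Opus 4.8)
The plan is to prove the three identities \eqref{eq:c-}, \eqref{eq:G=G+BF} and \eqref{eq:F=F} simultaneously by induction on the distance from $t_0$ in $\TT_n$. A preliminary observation is that matrix mutation anticommutes with the sign change, $B^{-B;t_0}_t=-B^{B;t_0}_t$, which follows by a one-line induction from \eqref{eq:matrix-mutation} and the identity \eqref{eq:b--b}; I therefore write $B_t$, $C_t$, $G_t$, $F_t$ for the $B;t_0$-quantities and use $B^{-B;t_0}_t=-B_t$ freely. The base case at $t_0$ is immediate, since $C^{\pm B;t_0}_{t_0}=G^{\pm B;t_0}_{t_0}=I_n$ and $F^{\pm B;t_0}_{t_0}=O_n$. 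For the inductive step along an edge $t\stackrel{\ell}{-}t'$ I assume all three relations at $t$ and verify them at $t'$. Throughout I abbreviate the $\ell$th columns by $c=C_t^{\bullet\ell}$, $p=F_t[B_t]^{\bullet\ell}_+$ and $q=F_t[-B_t]^{\bullet\ell}_+$, so that $F_tB_t^{\bullet\ell}=p-q$ and, by the inductive hypothesis \eqref{eq:c-}, the $\ell$th column of $C^{-B;t_0}_t$ equals $a:=c+p-q$; all maxima and all $[\,\cdot\,]_+$ below are taken entrywise.

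A convenient device is the $\varepsilon$-freedom of Remark~\ref{chooseep}: I evaluate the $-B$ recursions with $\varepsilon=-1$ and the $B$ recursions with $\varepsilon=1$, so that in both cases the mutation factor becomes $J_\ell+[B_t]^{\ell\bullet}_+$ for $C$ and $J_\ell+[B_t]^{\bullet\ell}_+$ for $G$, and the relevant positive parts of the $C$-matrices line up. I treat \eqref{eq:F=F} first. Since \eqref{eq:fmat-frontmutation} changes only the $\ell$th column, the identity is automatic for columns $j\neq\ell$, and for the $\ell$th column it reduces---after substituting the inductive hypotheses, so that $F^{-B;t_0}_t=F_t$, the $\ell$th column of $C^{-B;t_0}_t$ is $a$, and the two summands of the $-B$ recurrence contribute $F_t[-B_t]^{\bullet\ell}_+=q$ and $F_t[B_t]^{\bullet\ell}_+=p$---to the purely formal equality $\max([a]_++q,[-a]_++p)=\max([c]_++p,[-c]_++q)$. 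Writing $[x]_+=\max(x,0)$, both sides equal $\max(c+p,\,p,\,q-c,\,q)$, so \eqref{eq:F=F} holds at $t'$.

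With \eqref{eq:F=F} in hand at $t'$, I substitute the three inductive hypotheses into the re-indexed recursions to obtain \eqref{eq:c-} and \eqref{eq:G=G+BF}. The $G$-mutation \eqref{eq:g-frontmutation} again alters only the $\ell$th column, so \eqref{eq:G=G+BF} is automatic for $j\neq\ell$, and its $\ell$th column, after cancelling the common left factor $B$, reduces to $p+[-a]_++[c]_+=\max([c]_++p,[-c]_++q)$. The $C$-mutation \eqref{eq:c-frontmutation} changes every column: its $\ell$th column is automatic, since both $C^{-B;t_0}_{t'}-C^{B;t_0}_{t'}$ and $F^{B;t_0}_{t'}B_{t'}$ have $\ell$th column $q-p$ (using that right multiplication by $J_\ell+[B_t]^{\ell\bullet}_+$ negates the $\ell$th column and that $b_{\ell\ell;t}=0$); and for each $j\neq\ell$, expanding $F^{B;t_0}_{t'}B_{t'}$ by the matrix form of mutation $b_{ij;t'}=b_{ij;t}+[b_{i\ell;t}]_+[b_{\ell j;t}]_+-[-b_{i\ell;t}]_+[-b_{\ell j;t}]_+$ (with $b_{i\ell;t'}=-b_{i\ell;t}$ and $b_{\ell j;t'}=-b_{\ell j;t}$), the scalar $b_{\ell j;t}$ factors out and one is left with the single identity $\max([c]_++p,[-c]_++q)=[a]_++[-c]_++q$.

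I expect the main obstacle to be the term $[\,C_t+F_tB_t\,]^{\bullet\ell}_+$, the positive part of a sum, which does not split additively and where one would naively expect to invoke the sign-coherence of the $C$-matrices. The content of the reduction above is that this term is controlled entirely by the three elementary identities just displayed, each of which is verified at once by splitting into the cases $c\ge0$ and $c<0$ and using $[c]_+-[-c]_+=c$. Consequently, in contrast to the duality results of Section~\ref{section3}, Theorem~\ref{thm:C=C+FB} requires no sign-coherence and is a formal consequence of the defining recurrences; the only remaining labour is the bookkeeping of which columns each recursion modifies.
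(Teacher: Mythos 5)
Your proof is correct, but it proceeds along a genuinely different route from the paper's. The paper derives Theorem~\ref{thm:C=C+FB} structurally: it compares the cluster pattern with initial seed $(\xx,\yy,B)$ to the one with initial seed $\big(\xx,\yy^{-1},-B\big)$, uses the fact that these satisfy $\xx'_t=\xx_t$, $\yy'_t=\yy_t^{-1}$, $B'_t=-B_t$, and then extracts \eqref{eq:c-}, \eqref{eq:G=G+BF}, \eqref{eq:F=F} from the separation formulas \eqref{eq:xjt=F/F}--\eqref{eq:Y-F} by evaluating in $\trop\big(y_1^{-1},\dots,y_n^{-1}\big)$ and comparing tropical degrees and $\ZZ^n$-gradings. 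You instead run a simultaneous induction on the distance from $t_0$ directly on the defining recurrences \eqref{eq:c-frontmutation}, \eqref{eq:g-frontmutation}, \eqref{eq:fmat-frontmutation}, reducing everything to three entrywise piecewise-linear identities in $c$, $p$, $q$ with $a=c+p-q$; I have checked all three (each side of the first equals $\max(c+p,\,p,\,q-c,\,q)$, and the other two follow from $[x]_+ +y=\max(x+y,y)$ and $[c]_+-c=[-c]_+$), as well as the column bookkeeping, including the $j\neq\ell$ columns of the $C$-recursion where the mutation rule for $B_{t'}$ enters. Two small points: your ``cancelling the common left factor $B$'' in the $G$-step is logically sound only because you prove the stronger identity \emph{before} left-multiplying by $B$ (no invertibility of $B$ is used), and your choice $\varepsilon=-1$ for the $-B$-recursions silently invokes the $\varepsilon$-independence of \eqref{eq:g-frontmutation}, which rests on \eqref{GB=BC}; both are fine but worth making explicit. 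What each approach buys: the paper's argument is shorter and exhibits the theorem as a shadow of the symmetry $\yy\mapsto\yy^{-1}$, $B\mapsto -B$ of cluster patterns (and yields \eqref{Fpoly=Fpoly} along the way, which is reused later in the proof of Proposition~\ref{pr:frear}), whereas your argument is self-contained at the level of the recursions and makes transparent that no sign-coherence is needed --- consistent with the theorem's placement in Section~\ref{section2.2}.
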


\begin{proof}Let $\big\{\Sigma_t^{B}=(\xx_t, \yy_t, B_t)\big\}_{t\in\TT_n}$ be a cluster pattern with coefficients in any semifield $\PP$ with the initial seed $(\xx, \yy, B)$. Also, let $\big\{\Sigma_t^{-B}=(\xx'_t, \yy'_t, B'_t)\big\}_{t\in\TT_n}$ be a cluster pattern with coefficients in~$\PP$ with the initial seed $\big(\xx, \yy^{-1}, -B\big)$. Then, by the definition of the mutation~\eqref{eq:matrix-mutation},~\eqref{eq:y-mutation} and~\eqref{eq:x-mutation}, we have \cite[Proof of Proposition~5.3]{fziv}
\begin{gather} \label{x=x,y=y^{-1},B=-B}
\xx'_t=\xx_t, \qquad \yy'_t=\yy_t^{-1}, \qquad B'_t=-B_t.
\end{gather}
We also note that for the initial seed $ \Sigma^{-B}_{t_0}=(\xx', \yy', B')=\big(\xx, \yy^{-1}, -B\big)$, we have
\begin{gather*}
\hat{y'_i}:=y_i'\prod_{j=1}^{n} {x'_j}^{b_{ji}'}=y_i^{-1}\prod_{j=1}^n {x_j}^{-b_{ji}}=\hat{y}_i^{-1}.
\end{gather*}
Now we set $\PP=\trop\big(y_1^{-1}, \dots, y_n^{-1}\big)$ and apply the separation formulas \eqref{eq:xjt=F/F} and \eqref{eq:Y-F} to~\eqref{x=x,y=y^{-1},B=-B}. Then, we obtain
\begin{gather}
\left(\prod_{k=1}^n x_k^{g^{-B; t_0}_{kj;t}}\right) F_{j;t}^{-B;t_0}|_\Fcal\big(\hat{y}^{-1}_1, \dots, \hat{y}^{-1}_n\big)\nonumber\\
\qquad{} =\left(\prod_{k=1}^n x_k^{g^{B; t_0}_{kj;t}}\right) \frac{F_{j;t}^{B;t_0}|_\Fcal(\hat y_1, \dots, \hat y_n)}{F_{j;t}^{B;t_0}|_{\text{Trop}\left(y^{-1}_1, \dots, y^{-1}_n\right)} (y_1, \dots, y_n)}, \label{eq:degree-rescaling}\\
\label{eq:Y-F-1}
\left(\prod_{k=1}^n \big(y^{-1}_k\big)^{c^{-B; t_0}_{kj;t}}\right)^{-1}
=\prod_{k=1}^n y_{k}^{c^{B; t_0}_{kj;t}}\prod_{k=1}^n \big(F_{k;t}^{B;t_0}|_{\text{Trop}\left(y_1^{-1}, \dots, y_n^{-1}\right)}(y_{1}, \dots, y_{n})\big)^{b_{kj;t}},
\end{gather}
by \eqref{eq:F=1} and \eqref{eq:c-coef}.
Applying \eqref{eq:tropicalmaximaldegree} to \eqref{eq:Y-F-1}, we obtain
\begin{gather*}
\cc_{j;t}^{-B;t_0} = \cc_{j;t}^{B;t_0}+\sum_{i=1}^n b_{ij; t}\ff^{B;t_0}_{i;t} ,
\end{gather*}
thus we have \eqref{eq:c-}.
To show \eqref{eq:G=G+BF} from \eqref{eq:degree-rescaling}, let us set the $\ZZ^n$-gradings in $\ZZ[x_1^{\pm1}, \dots, x_n^{\pm1}, \allowbreak y_1, \dots, y_n]$ as follows \cite[equations~(6.1) and~(6.2)]{fziv}:
\begin{gather*}
\deg(x_i)=\mathbf{e}_i, \qquad \deg(y_i)=-{\mathbf b}_i,
\end{gather*}
where $\mathbf{e}_i$ is the $i$th column vector of $I_n$ and ${\mathbf b}_i$ is the $i$th column vector of $B$.
Then, we have
\begin{gather*}
\deg(\hat{y}_i)=0.
\end{gather*}
Hence comparing the $\ZZ^n$-gradings of both sides of~\eqref{eq:degree-rescaling}, we obtain
\begin{gather*} 
\gg_{j;t}^{-B;t_0} = \gg_{j;t}^{B;t_0}+\sum_{i=1}^n f^{B; t_0}_{ij; t}\mathbf{b}_i .
\end{gather*}
Therefore, we have \eqref{eq:G=G+BF}. Let us prove \eqref{eq:F=F}. Substituting $x_i=1$ $(i=1, \dots, n)$ for~\eqref{eq:degree-rescaling}, we have \cite[equation~(5.6)]{fziv}
\begin{gather}\label{Fpoly=Fpoly}
F_{j;t}^{-B;t_0}\big(y^{-1}_1,\dots,y^{-1}_n\big)
= \frac{F_{j;t}^{B;t_0}(y_1,\dots,y_n)}
{F_{j;t}^{B;t_0}|_{\text{Trop}\left(y^{-1}_1, \dots, y^{-1}_n\right)}(y_1, \dots,y_n)}.
\end{gather}
Under the exchange of $B$ and $-B$ in \eqref{Fpoly=Fpoly}, we also have
\begin{gather}\label{Fpoly=Fpoly2}
F_{j;t}^{B;t_0}(y_1,\dots,y_n)
= \frac{F_{j;t}^{-B;t_0}\big(y^{-1}_1,\dots,y^{-1}_n\big)}
{F_{j;t}^{-B;t_0}|_{\text{Trop}(y_1, \dots, y_n)}\big(y^{-1}_1, \dots,y^{-1}_n\big)}.
\end{gather}
Hence combining \eqref{Fpoly=Fpoly} with \eqref{Fpoly=Fpoly2}, we obtain
\begin{gather}\label{eq:FF=1}
F_{j;t}^{-B;t_0}|_{\text{Trop}(y_1, \dots, y_n)}\big(y_1^{-1},\dots,y_n^{-1}\big)F_{j;t}^{B;t_0}|_{\text{Trop}\left(y_1^{-1}, \dots, y_n^{-1}\right)}(y_1,\dots,y_n)=1.
\end{gather}
Comparing exponents of $y_i$ of both sides of \eqref{eq:FF=1}, we have
\begin{gather*}
-\ff_{j;t}^{-B;t_0}+\ff_{j;t}^{B;t_0}=\mathbf0,
\end{gather*}
thus we obtain \eqref{eq:F=F}.
\end{proof}

Thanks to the relation \eqref{eq:c-}, we have the following alternating expression of the final-seed mutations of the $F$-matrices:

\begin{Proposition}\label{pr:ffr}Let $\varepsilon\in\{\pm 1\}$.
For any edge \begin{xy}(0,1)*+{t}="A",(10,1)*+{t'}="B",\ar@{-}^{\ell}"A";"B" \end{xy} in $\TT_n$, the matrices $F^{B; t_0}_t$ and $F^{B; t_0}_{t'}$ are related by
\begin{gather}\label{eq:ffr}
	F^{B; t_0}_{t'}=F^{B; t_0}_t\big(J_{\ell}+[-\varepsilon B_t]^{\bullet \ell}_+\big)+\big[{-}\varepsilon C^{B; t_0}_t\big]^{\bullet \ell}_+ +\big[\varepsilon C^{-B; t_0}_t\big]^{\bullet\ell}_+.
\end{gather}
\end{Proposition}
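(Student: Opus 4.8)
The plan is to begin from the maximum form of the final-seed mutation, equation~\eqref{eq:fmat-frontmutation} of Proposition~\ref{pr:ffnc}, and to convert the entrywise maximum into the asserted $\varepsilon$-dependent expression using the elementary scalar identity $\max(a,b)=b+[a-b]_+=a+[b-a]_+$, applied entrywise. Both \eqref{eq:fmat-frontmutation} and the target~\eqref{eq:ffr} contain the common term $F^{B;t_0}_tJ_\ell$, while all the remaining terms are supported on their $\ell$th column; so after cancelling $F^{B;t_0}_tJ_\ell$ it suffices to verify an identity between $\ell$th columns. I would write $\cc$ for the $\ell$th column of $C^{B;t_0}_t$ and $\mathbf b$ for the $\ell$th column of $B_t$, so that the two arguments of the maximum in~\eqref{eq:fmat-frontmutation} become the vectors $P=[\cc]_++F^{B;t_0}_t[\mathbf b]_+$ and $Q=[-\cc]_++F^{B;t_0}_t[-\mathbf b]_+$.

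The key observation is that $P-Q$ is precisely the $\ell$th column of $C^{-B;t_0}_t$. Indeed, applying \eqref{eq:b--b} entrywise gives $[\cc]_+-[-\cc]_+=\cc$ and $[\mathbf b]_+-[-\mathbf b]_+=\mathbf b$, whence $P-Q=\cc+F^{B;t_0}_t\mathbf b$, and by the relation~\eqref{eq:c-} of Theorem~\ref{thm:C=C+FB} this is the $\ell$th column of $C^{B;t_0}_t+F^{B;t_0}_tB_t=C^{-B;t_0}_t$; call it $\cc'$. For $\varepsilon=1$ I would then use $\max(P,Q)=Q+[P-Q]_+=[-\cc]_++F^{B;t_0}_t[-\mathbf b]_++[\cc']_+$, which is exactly the $\ell$th column of $F^{B;t_0}_t[-B_t]^{\bullet\ell}_++[-C^{B;t_0}_t]^{\bullet\ell}_++[C^{-B;t_0}_t]^{\bullet\ell}_+$; restoring $F^{B;t_0}_tJ_\ell$ yields~\eqref{eq:ffr} for $\varepsilon=1$. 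For $\varepsilon=-1$ the dual form $\max(P,Q)=P+[Q-P]_+$ together with $Q-P=-\cc'$ gives the remaining case in the same way.

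The computation is essentially mechanical, so the only point I would treat as the crux is the bookkeeping that converts the maximum into a sign-split sum, namely the identification of $P-Q$ with a column of $C^{-B;t_0}_t$ via~\eqref{eq:c-}. This is exactly what forces the two choices of $\varepsilon$ to produce the same matrix, in the same spirit as the $\varepsilon$-independence recorded in Remark~\ref{chooseep} for the $C$- and $G$-recurrences; and I note that no sign-coherence of the $C$-matrices is required, since \eqref{eq:c-} itself was derived from the separation formulas alone.
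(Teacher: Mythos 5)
Your proposal is correct and follows essentially the same route as the paper: both start from the max-form recurrence \eqref{eq:fmat-frontmutation}, use \eqref{eq:b--b} to extract the common term $F^{B;t_0}_t[-\varepsilon B_t]^{\bullet\ell}_+$, and invoke \eqref{eq:c-} to identify $F^{B;t_0}_tB_t^{\bullet\ell}$ with $\big(C^{-B;t_0}_t\big)^{\bullet\ell}-\big(C^{B;t_0}_t\big)^{\bullet\ell}$, which turns the maximum into the sign-split sum. Your column-wise bookkeeping via $\max(P,Q)=Q+[P-Q]_+$ is just a notational repackaging of the paper's matrix computation, and your remark that no sign-coherence is needed is consistent with the paper.
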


\begin{proof}Firstly, we prove the case of $\varepsilon=1$ in \eqref{eq:ffr}. By \eqref{eq:b--b}, \eqref{eq:fmat-frontmutation} and \eqref{eq:c-}, we have
\begin{align*}
	F^{B;t_0}_{t'}&=F^{B;t_0}_t\big(J_{\ell}+[-B_t]^{\bullet \ell}_+\big)+\max\big(\big[C^{B; t_0}_t\big]^{\bullet \ell}_+ +F^{B; t_0}_tB_t^{\bullet \ell}, \big[{-}C^{B; t_0}_t\big]^{\bullet \ell}_+\big)\\
	&=F^{B;t_0}_t\big(J_{\ell}+[-B_t]^{\bullet \ell}_+\big)+\max\big(\big[C^{B; t_0}_t\big]^{\bullet \ell}_+ +\big(C^{-B; t_0}_t\big)^{\bullet \ell}-\big(C^{B; t_0}_t\big)^{\bullet\ell} ,\big[{-}C^{B; t_0}_t\big]^{\bullet \ell}_+\big)\\
	&=F^{B;t_0}_t\big(J_{\ell}+[-B_t]^{\bullet \ell}_+\big)+\max\big(\big[{-}C^{B; t_0}_t\big]^{\bullet \ell}_++\big(C^{-B; t_0}_t\big)^{\bullet \ell},\big[{-}C^{B; t_0}_t\big]^{\bullet \ell}_+\big)\\
	&=F^{B; t_0}_t\big(J_{\ell}+[-B_t]^{\bullet \ell}_+\big)+\big[{-}C^{B; t_0}_t\big]^{\bullet \ell}_+ +\big[ C^{-B; t_0}_t\big]^{\bullet\ell}_+
\end{align*}
as desired.
Secondly, we prove the case of $\varepsilon=-1$ in \eqref{eq:ffr}. In the same way as $\varepsilon=1$, we have
\begin{align*}
	F^{B;t_0}_{t'}&=F^{B;t_0}_t\big(J_{\ell}+[B_t]^{\bullet \ell}_+\big)+\max\big(\big[C^{B; t_0}_t\big]^{\bullet \ell}_+, \big[{-}C^{B; t_0}_t\big]^{\bullet \ell}_+ -F^{B; t_0}_tB_t^{\bullet \ell}\big)\\
	&=F^{B;t_0}_t\big(J_{\ell}+[B_t]^{\bullet \ell}_+\big)+\max\big(\big[C^{B; t_0}_t\big]^{\bullet \ell}_+ ,\big[C^{B; t_0}_t\big]^{\bullet \ell}_+-\big(C^{-B; t_0}_t\big)^{\bullet \ell}\big)\\
	&=F^{B; t_0}_t\big(J_{\ell}+[B_t]^{\bullet \ell}_+\big)+\big[C^{B; t_0}_t\big]^{\bullet \ell}_+ +\big[{-} C^{-B; t_0}_t\big]^{\bullet\ell}_+
\end{align*}
as desired.
\end{proof}

\subsection[Final-seed mutations with sign-coherence of $C$-matrices]{Final-seed mutations with sign-coherence of $\boldsymbol{C}$-matrices}\label{section2.3}

In this subsection, we reduce the final-seed mutation formulas by applying the sign-coherence of the $C$-matrices.

\begin{Definition}Let $A$ be an (not necessarily square) integer matrix. We say that $A$ is \emph{column sign-coherent} (resp.\ \emph{row sign-coherent}) if for any column (resp.\ row) of $A$, its entries are either all nonnegative, or all nonpositive, and not all zero.
\end{Definition}

When $A$ is column sign-coherent (resp.\ row sign-coherent), we can define its $\ell$th \emph{column sign}~$\varepsilon_{\bullet\ell}(A)$ (resp.\ \emph{row sign} $\varepsilon_{\ell\bullet}(A)$) as the sign of nonzero entries of the $\ell$th column (resp. row) of $A$. We have the following fundamental and nontrivial result:

\begin{Theorem}[{\cite[Corollary 5.5]{ghkk}}]\label{thm:signs-ci} For any initial exchange matrix $B$, every $C$-matrix $C_t^{B;t_0}$ $(t\in \TT_n)$ is column sign-coherent.
\end{Theorem}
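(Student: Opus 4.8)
The plan is to invoke the theory of cluster scattering diagrams, since no elementary induction on $\TT_n$ is known and, at the level of generality needed here (arbitrary skew-symmetrizable $B$), the only available route is the one of \cite{ghkk}; what follows is a description of that strategy rather than a self-contained argument. The key point to keep in mind is that sign-coherence is genuinely deep input, not a formal consequence of the mutation rule \eqref{eq:c-frontmutation}.

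First I would fix a skew-symmetrizer $D$ of $B$ and pass to the lattice $N=\ZZ^n$ equipped with the skew-symmetric form induced by $DB$, together with the dual lattice $M$. On a suitable completed monoid algebra one builds the \emph{cluster scattering diagram} $\mathfrak{D}$: a locally finite collection of codimension-one rational cones (\emph{walls}), each decorated with a wall-crossing automorphism that is a power series in a single monomial $z^{m_0}$, where $m_0$ is the primitive direction attached to the wall. The crucial input is the existence and uniqueness, up to equivalence, of a \emph{consistent} such diagram with prescribed initial walls $e_i^{\perp}$ --- consistency meaning that the path-ordered product of wall-crossings around any small loop is trivial --- supplied by the Kontsevich--Soibelman / Gross--Siebert algorithm in the form used in \cite{ghkk}.

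Next I would locate the cluster data inside $\mathfrak{D}$. There is a distinguished positive chamber $\sigma^{+}$, and each $t\in\TT_n$ obtained from $t_0$ by a sequence of mutations corresponds to a chamber $\sigma_t$ reached by crossing the matching sequence of walls; the extremal generators of the cone $\sigma_t$ are the columns of $G^{B;t_0}_t$, while the wall separating $\sigma_t$ from its $k$-th mutation has primitive direction equal, after the $D$-rescaling relating $M$ to the $c$-vector lattice, to the $k$-th $c$-vector $\cc^{B;t_0}_{k;t}$. Thus every $c$-vector is realised as the primitive direction of a single wall of $\mathfrak{D}$.

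The payoff step is that sign-coherence becomes a structural feature of the scattering construction: starting from the initial walls, the recursion produces only walls whose direction $m_0$ is a nonnegative integral combination of the initial directions $e_1,\dots,e_n$, so each such $m_0$ --- and hence each $c$-vector, up to the overall sign recording on which side of $\sigma^{+}$ the chamber $\sigma_t$ lies --- has all coordinates nonnegative or all nonpositive; they are not all zero because $C^{B;t_0}_t$ is invertible. Transporting this back through the dictionary yields that every column of $C^{B;t_0}_t$ is entirely nonnegative or entirely nonpositive, as claimed. The hard part is exactly the existence--consistency of $\mathfrak{D}$ together with the precise matching of its chambers and walls with the $G$- and $C$-matrices; this technical core lies outside the elementary framework of the present paper, which is why the statement is imported from \cite{ghkk}. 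I note that when $B$ is skew-symmetric one may instead realise, up to a global sign, each $c$-vector as the dimension vector of an $E$-rigid indecomposable decorated representation of a quiver with potential (after Derksen--Weyman--Zelevinsky, Nagao and Plamondon), reducing sign-coherence to nonnegativity of dimension vectors together with a positive/negative splitting --- but this does not reach the skew-symmetrizable generality required here.
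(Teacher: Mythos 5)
The paper offers no proof of this statement: it is imported verbatim from \cite[Corollary~5.5]{ghkk}, exactly as you indicate, so your treatment agrees with the paper's. Your outline of the scattering-diagram route (existence and consistency of the cluster scattering diagram, identification of cluster chambers with $G$-matrices and of wall directions with $c$-vectors, and the structural positivity of wall directions) is a faithful summary of that external argument, and your closing remark on the representation-theoretic alternative in the skew-symmetric case is also accurate.
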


The column signs of the $C$-matrix $C_t^{B;t_0}$ are called the \emph{tropical signs} due to \eqref{eq:c-coef}. Using them, the following reduced expression of the final-seed mutations of the $C$- and $G$-matrices are obtained:

\begin{Proposition} [{\cite[Proposition 1.3]{nz}}] \label{pr:cgfs}
For any edge \begin{xy}(0,1)*+{t}="A",(10,1)*+{t'}="B",\ar@{-}^{\ell}"A";"B" \end{xy} in $\TT_n$, we have
\begin{gather}\label{eq:c-frontmutationsign}
C^{B; t_0}_{t'}=C^{B; t_0}_t\big(J_{\ell}+\big[\varepsilon_{\bullet\ell}\big(C^{B; t_0}_t\big) B_t\big]^{\ell\bullet}_+\big), \\
\label{eq:g-frontmutationsign}
G^{B; t_0}_{t'}=G^{B; t_0}_t\big(J_{\ell}+\big[{-}\varepsilon_{\bullet\ell}(C^{B; t_0}_t) B_t\big]^{\bullet \ell}_+\big).
\end{gather}
\end{Proposition}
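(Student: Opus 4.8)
The plan is to derive both reduced formulas directly from the sign-free recurrences \eqref{eq:c-frontmutation} and \eqref{eq:g-frontmutation} by exploiting the freedom to choose $\varepsilon\in\{\pm1\}$ (Remark~\ref{chooseep}) together with the sign-coherence of $C_t^{B;t_0}$ (Theorem~\ref{thm:signs-ci}). The key observation is that, for the appropriate choice of $\varepsilon$, the second (``correction'') term in each recurrence becomes the zero matrix $O_n$, leaving precisely the desired one-term expression.

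First I would treat the $C$-matrix. By Theorem~\ref{thm:signs-ci} the $\ell$th column of $C_t^{B;t_0}$ is sign-coherent, so its column sign $\varepsilon_{\bullet\ell}\big(C_t^{B;t_0}\big)$ is well defined. I would then set $\varepsilon=\varepsilon_{\bullet\ell}\big(C_t^{B;t_0}\big)$ in \eqref{eq:c-frontmutation}. With this choice every entry of the $\ell$th column of $\varepsilon C_t^{B;t_0}$ is nonnegative, hence every entry of the $\ell$th column of $-\varepsilon C_t^{B;t_0}$ is nonpositive; since $[\,\cdot\,]^{\bullet\ell}_+$ retains only the $\ell$th column and then takes positive parts, we obtain $\big[{-}\varepsilon C_t^{B;t_0}\big]^{\bullet\ell}_+=O_n$. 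Thus the correction term $\big[{-}\varepsilon C_t^{B;t_0}\big]^{\bullet\ell}_+B_t$ vanishes and \eqref{eq:c-frontmutation} collapses to \eqref{eq:c-frontmutationsign}.

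Next I would treat the $G$-matrix analogously, but with the opposite sign. In \eqref{eq:g-frontmutation} the correction term is $-B\big[\varepsilon C_t^{B;t_0}\big]^{\bullet\ell}_+$, so to annihilate it I would choose $\varepsilon=-\varepsilon_{\bullet\ell}\big(C_t^{B;t_0}\big)$. With this choice the $\ell$th column of $\varepsilon C_t^{B;t_0}$ is nonpositive, so $\big[\varepsilon C_t^{B;t_0}\big]^{\bullet\ell}_+=O_n$ and the correction term disappears. The surviving first term $G_t^{B;t_0}\big(J_\ell+[\varepsilon B_t]^{\bullet\ell}_+\big)$ is then exactly $G_t^{B;t_0}\big(J_\ell+\big[{-}\varepsilon_{\bullet\ell}\big(C_t^{B;t_0}\big)B_t\big]^{\bullet\ell}_+\big)$, which is \eqref{eq:g-frontmutationsign}.

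There is essentially no deep obstacle once sign-coherence is available: the whole content is that the $\varepsilon$-freedom in \eqref{eq:c-frontmutation} and \eqref{eq:g-frontmutation} can be used to force the correction term to zero. The only point requiring care is the bookkeeping — verifying that the operation $[\,\cdot\,]^{\bullet\ell}_+$ and multiplication by the column sign interact as claimed, and noting that the $C$- and $G$-cases demand \emph{opposite} signs of $\varepsilon$, because one correction term carries the sign matrix on the right, $\big[{-}\varepsilon C_t^{B;t_0}\big]^{\bullet\ell}_+B_t$, while the other carries it on the left, $-B\big[\varepsilon C_t^{B;t_0}\big]^{\bullet\ell}_+$. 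The genuinely hard input, namely the sign-coherence of the $C$-matrices, is supplied externally by Theorem~\ref{thm:signs-ci}.
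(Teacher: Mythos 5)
Your proposal is correct and is exactly the paper's own argument: the authors state immediately after the proposition that \eqref{eq:c-frontmutationsign} and \eqref{eq:g-frontmutationsign} are obtained from \eqref{eq:c-frontmutation} and \eqref{eq:g-frontmutation} by setting $\varepsilon=\varepsilon_{\bullet\ell}\big(C_{t}^{B;t_0}\big)$ and $\varepsilon=-\varepsilon_{\bullet\ell}\big(C_{t}^{B;t_0}\big)$ respectively, which is precisely your mechanism of using sign-coherence to kill the correction term $\big[{-}\varepsilon C^{B;t_0}_t\big]^{\bullet\ell}_+B_t$, resp.\ $-B\big[\varepsilon C^{B;t_0}_t\big]^{\bullet\ell}_+$. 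Your bookkeeping, including the observation that the two cases require opposite choices of $\varepsilon$, matches the paper.
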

They are obtained from \eqref{eq:c-frontmutation} and \eqref{eq:g-frontmutation} by setting $\varepsilon=\varepsilon_{\bullet\ell}\big(C_{t}^{B;t_0}\big)$ and $\varepsilon=-\varepsilon_{\bullet\ell}\big(C_{t}^{B;t_0}\big)$, respectively.

The following fact is shown by \cite{fziv}:

\begin{Proposition}[{\cite[Proposition 5.6]{fziv}}] \label{eqfc} For any initial exchange matrix $B$, the following are equivalent:
\begin{enumerate}\itemsep=0pt
\item[$(i)$] The sign-coherence of the $C$-matrices holds.
\item[$(ii)$] Every polynomial $F_{\ell;t}^{B;t_0}(\yy)$ has constant term~$1$.
\item[$(iii)$] Every polynomial $F_{\ell;t}^{B;t_0}(\yy)$ has a unique monomial of maximal degree. Furthermore, this monomial has coefficient~$1$, and it is divisible by all the other occurring monomials.
\end{enumerate}
\end{Proposition}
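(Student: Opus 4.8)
The plan is to separate the two genuinely different ingredients. Condition (ii), about the \emph{constant term}, will be tied to sign-coherence (i) directly through the recurrence \eqref{eq:f-frontmutation}; condition (iii), about the \emph{maximal-degree monomial}, will be extracted from (ii) by the $B\leftrightarrow -B$ duality of Theorem~\ref{thm:C=C+FB}. Throughout, the formal device is to clear the denominator in \eqref{eq:f-frontmutation}, writing $F^{B;t_0}_{\ell;t'}\,F^{B;t_0}_{\ell;t}=P_++P_-$ with $P_\pm=\prod_i y_i^{[\pm c^{B;t_0}_{i\ell;t}]_+}\prod_i\big(F^{B;t_0}_{i;t}\big)^{[\pm b_{i\ell;t}]_+}$, and then to read off either the value at $\yy=\mathbf 0$ (for (ii)) or the leading exponent (for (iii)).

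First I would prove (i)$\Leftrightarrow$(ii). For (i)$\Rightarrow$(ii) I argue by induction on the distance from $t_0$, the base case being $F^{B;t_0}_{j;t_0}=1$ by \eqref{eq:f-initialcondition}. At an edge labelled $\ell$ joining $t$ and $t'$ with $t$ closer to $t_0$, sign-coherence of the $\ell$-th column of $C^{B;t_0}_t$ makes exactly one of the monomials $\prod_i y_i^{[c^{B;t_0}_{i\ell;t}]_+}$, $\prod_i y_i^{[-c^{B;t_0}_{i\ell;t}]_+}$ equal to $1$ and the other nonconstant; hence $(P_++P_-)|_{\yy=\mathbf 0}=1$ (each factor $F^{B;t_0}_{i;t}$ having constant term $1$ by induction), and dividing by $F^{B;t_0}_{\ell;t}|_{\yy=\mathbf 0}=1$ gives constant term $1$ for $F^{B;t_0}_{\ell;t'}$. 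Conversely, for (ii)$\Rightarrow$(i) I use the contrapositive edge by edge: if the $\ell$-th column of $C^{B;t_0}_t$ is not sign-coherent, then \emph{both} monomials $\prod_i y_i^{[\pm c^{B;t_0}_{i\ell;t}]_+}$ are nonconstant, so $(P_++P_-)|_{\yy=\mathbf 0}=0$, forcing $F^{B;t_0}_{\ell;t'}|_{\yy=\mathbf 0}=0$ and contradicting (ii). Since every column of every $C^{B;t_0}_t$ is the $\ell$-th column for some such edge, sign-coherence follows everywhere.

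For the remaining equivalence I would use duality. Combining \eqref{Fpoly=Fpoly} with \eqref{eq:tropicalmaximaldegree} gives $F^{-B;t_0}_{\ell;t}(\yy)=\yy^{\,\ff^{B;t_0}_{\ell;t}}\,F^{B;t_0}_{\ell;t}(\yy^{-1})$, so by \eqref{eq:F=F} the polynomial $F^{-B;t_0}_{\ell;t}$ has the same coefficients as $F^{B;t_0}_{\ell;t}$ with exponent vectors reflected by $\mathbf a\mapsto \ff^{B;t_0}_{\ell;t}-\mathbf a$, the exponents ranging over $\mathbf 0\le\mathbf a\le \ff^{B;t_0}_{\ell;t}$ thanks to \eqref{eq:F=1}. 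This reflection sends the monomial $\yy^{\mathbf 0}$ to the maximal-degree monomial $\yy^{\,\ff^{B;t_0}_{\ell;t}}$, and therefore interchanges ``$F^{B;t_0}_{\ell;t}$ has constant term $1$'' with ``$F^{-B;t_0}_{\ell;t}$ has a unique maximal-degree monomial, of coefficient $1$ and divisible by all the others.'' Thus (ii) for $B$ coincides with (iii) for $-B$, and (iii) for $B$ with (ii) for $-B$. Chaining these identifications with the equivalence (i)$\Leftrightarrow$(ii) applied to $-B$ reduces everything to the identity of (i) for $B$ and for $-B$, which holds because Theorem~\ref{thm:signs-ci} guarantees sign-coherence for every initial exchange matrix; this completes the equivalence.

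The main obstacle is exactly this $B\leftrightarrow -B$ passage: for a single matrix $B$ the conditions (ii) and (iii) are not directly comparable, since the constant term and the maximal monomial of $F^{B;t_0}_{\ell;t}$ sit at opposite corners of its Newton polytope, and only the reflection identity above—which swaps the roles of $B$ and $-B$—relates them. The same difficulty appears if one tries to prove (i)$\Rightarrow$(iii) by the inductive bookkeeping used for (ii): the step that is \emph{not} a routine computation is showing that the leading monomials of $P_+$ and $P_-$ are comparable, and their leading exponents differ precisely by the $\ell$-th column of $C^{-B;t_0}_t$ (a direct consequence of \eqref{eq:c-}), so their comparability is exactly the sign-coherence of $C^{-B;t_0}_t$. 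Setting up and exploiting this duality is the crux, whereas the constant-term analysis underlying (i)$\Leftrightarrow$(ii) is elementary.
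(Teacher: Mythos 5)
The paper does not actually prove this proposition; it is quoted from Fomin--Zelevinsky~IV (Proposition~5.6 for the link between (i) and (ii), and Proposition~5.3 --- via the reflection identity that appears here as \eqref{Fpoly=Fpoly} --- for the link between (ii) and (iii)), so there is no in-paper argument to compare against. Your proposal reconstructs essentially that source's proof: the constant-term induction through the cleared recurrence $F^{B;t_0}_{\ell;t'}F^{B;t_0}_{\ell;t}=P_++P_-$ for (i)$\Leftrightarrow$(ii), and the reflection $\mathbf a\mapsto \ff^{B;t_0}_{\ell;t}-\mathbf a$ of Newton polytopes to trade constant terms for top monomials. Two caveats. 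First, in (ii)$\Rightarrow$(i) you claim that failure of sign-coherence of the $\ell$-th column makes \emph{both} monomials $\prod_i y_i^{[\pm c^{B;t_0}_{i\ell;t}]_+}$ nonconstant; under this paper's definition a zero column also fails sign-coherence, and there both monomials equal $1$, so the contradiction with (ii) comes from constant term $2\neq 1$ rather than $0$ --- a one-sentence fix, and immaterial for the Fomin--Zelevinsky definition, which (as the paper's own Remark notes) does not exclude zero columns. Second, your reflection only yields (ii) for $B$ $\Leftrightarrow$ (iii) for $-B$, and you close the loop by invoking Theorem~\ref{thm:signs-ci} to identify (i) for $B$ with (i) for $-B$. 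That is logically sound in the context of this paper, where the sign-coherence theorem is already available, but it collapses the proposition to ``all three statements are true''; Fomin--Zelevinsky avoid this by stating (ii) and (iii) as conjectures quantified over all initial $B$, for which the $B\leftrightarrow -B$ swap costs nothing. Either way, the statement as printed is established, and your identification of the $B\leftrightarrow -B$ passage as the crux is accurate.
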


{In proposition \ref{eqfc}, the equivalence of (ii) and (iii) is proved by \cite[Proposition 5.3]{fziv} (see \cite[Conjectures 5.4 and 5.5]{fziv}).}

\begin{Remark}
In the definition of the (column) sign-coherence in \cite{fziv}, the nonzero vector property of column vectors are not assumed. However, this property can be easily recovered, since $\det C_t^{B;t_0}=\pm1$ due to \eqref{eq:c-frontmutationsign}.
\end{Remark}

By Theorem \ref{thm:signs-ci} and Proposition \ref{eqfc}, we have the following description of the $\ff$-vectors:

\begin{Corollary}
The $\ff$-vector $\ff_{i;t}^{B;t_0}$ is the exponent vector of the unique monomial with maximal degree of $F_{i;t}^{B;t_0}(\yy)$. In other words, the unique monomial with maximal degree of $F_{i;t}^{B;t_0}(\yy)$ is given by $y_1^{f_{1i;t}^{B;t_0}}\cdots y_n^{f_{ni;t}^{B;t_0}}$.
\end{Corollary}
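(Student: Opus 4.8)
The plan is to deduce the statement directly from the characterization of the maximal monomial furnished by Proposition~\ref{eqfc}. First I would apply Theorem~\ref{thm:signs-ci}, which guarantees that every $C$-matrix $C_t^{B;t_0}$ is column sign-coherent, so that condition~(i) of Proposition~\ref{eqfc} holds. Invoking the equivalence of (i) and~(iii) in that proposition, each $F$-polynomial $F_{i;t}^{B;t_0}(\yy)$ therefore has a \emph{unique} monomial of maximal degree; moreover this monomial has coefficient $1$ and is divisible by every other monomial occurring in $F_{i;t}^{B;t_0}(\yy)$. I would denote this distinguished monomial by $M=\prod_{j=1}^n y_j^{a_j}$.

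The remaining task is to show that $a_j=f_{ji;t}^{B;t_0}$ for every $j$, that is, that the exponents of $M$ are exactly the per-variable maximal degrees recorded in $\ff_{i;t}^{B;t_0}$. This follows from two opposing inequalities. On one hand, $M$ is itself a monomial of $F_{i;t}^{B;t_0}(\yy)$, so the degree $a_j$ of $y_j$ in $M$ is at most the maximal degree of $y_j$ over the whole polynomial, giving $a_j\le f_{ji;t}^{B;t_0}$. On the other hand, by definition of $f_{ji;t}^{B;t_0}$ there is some monomial $M'$ of $F_{i;t}^{B;t_0}(\yy)$ whose $y_j$-degree equals $f_{ji;t}^{B;t_0}$; since $M$ is divisible by $M'$, its exponent $a_j$ dominates the $y_j$-exponent of $M'$, whence $a_j\ge f_{ji;t}^{B;t_0}$. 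Combining the two gives $a_j=f_{ji;t}^{B;t_0}$, so that $M=y_1^{f_{1i;t}^{B;t_0}}\cdots y_n^{f_{ni;t}^{B;t_0}}$, exactly the asserted monomial, and its uniqueness and maximality are inherited from Proposition~\ref{eqfc}(iii).

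The only conceptual point, rather than a genuine obstacle, is the essential use of the divisibility clause in Proposition~\ref{eqfc}(iii). A priori the individual maximal degrees $f_{ji;t}^{B;t_0}$ could be attained in different monomials, in which case no single monomial would carry all of them simultaneously; it is precisely the divisibility of the maximal monomial by all the others that forces these per-variable maxima to coincide with the exponents of one and the same monomial. Once this is observed, the proof is immediate and requires no computation.
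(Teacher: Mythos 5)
Your proof is correct and follows the same route the paper intends: the paper simply states the Corollary as an immediate consequence of Theorem~\ref{thm:signs-ci} and Proposition~\ref{eqfc} without writing out a proof, and your argument supplies exactly the implicit details. In particular, your use of the divisibility clause of Proposition~\ref{eqfc}(iii) to show that the per-variable maximal degrees are all attained by the single maximal monomial is precisely the point the paper leaves to the reader.
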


Now, let us give the reduced expression of the final-seed mutations of the $F$-matrices by using the tropical signs.

\begin{Proposition} \label{pr:ffront}For any edge \begin{xy}(0,1)*+{t}="A",(10,1)*+{t'}="B",\ar@{-}^{\ell}"A";"B" \end{xy} in $\TT_n$, we have
{\begin{align}
	F^{B; t_0}_{t'}&=F^{B; t_0}_t \big(J_{\ell}+\big[\varepsilon_{\bullet\ell}\big(C^{-B; t_0}_t\big) B_t\big]^{\bullet \ell}_+\big)+\big[\varepsilon_{\bullet\ell}\big(C^{-B; t_0}_t\big)C^{B; t_0}_t\big]^{\bullet {\ell}}_+ \notag\\
	&=F^{B; t_0}_t \big(J_{\ell}+\big[{-}\varepsilon_{\bullet\ell}\big(C^{B; t_0}_t\big)B_t\big]^{\bullet \ell}_+\big)+\big[\varepsilon_{\bullet\ell}\big(C^{B; t_0}_t\big)C^{-B; t_0}_t\big]^{\bullet {\ell}}_+. \label{eq:ffront}
\end{align}}
\end{Proposition}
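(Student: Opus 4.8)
The plan is to specialize the $\varepsilon$-independent formula \eqref{eq:ffr} of Proposition~\ref{pr:ffr} to two judiciously chosen values of $\varepsilon$, and then to invoke the sign-coherence of the $C$-matrices (Theorem~\ref{thm:signs-ci}) to make one of the two positive-part terms vanish. Recall that \eqref{eq:ffr} asserts, for either sign $\varepsilon\in\{\pm1\}$,
\begin{gather*}
F^{B; t_0}_{t'}=F^{B; t_0}_t\big(J_{\ell}+[-\varepsilon B_t]^{\bullet \ell}_+\big)+\big[{-}\varepsilon C^{B; t_0}_t\big]^{\bullet \ell}_+ +\big[\varepsilon C^{-B; t_0}_t\big]^{\bullet\ell}_+.
\end{gather*}
The crucial observation is that not only $C^{B; t_0}_t$ but also $C^{-B; t_0}_t$ is a genuine $C$-matrix, namely the one attached to the initial exchange matrix $-B$, so Theorem~\ref{thm:signs-ci} applies to both: the $\ell$th column of $C^{B; t_0}_t$ (resp.\ $C^{-B; t_0}_t$) is sign-coherent with sign $\varepsilon_{\bullet\ell}\big(C^{B; t_0}_t\big)$ (resp.\ $\varepsilon_{\bullet\ell}\big(C^{-B; t_0}_t\big)$).

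For the first equality in \eqref{eq:ffront} I would set $\varepsilon=-\varepsilon_{\bullet\ell}\big(C^{-B; t_0}_t\big)$. The leading term then becomes $F^{B; t_0}_t\big(J_{\ell}+\big[\varepsilon_{\bullet\ell}(C^{-B; t_0}_t) B_t\big]^{\bullet \ell}_+\big)$, and the two positive-part terms become $\big[\varepsilon_{\bullet\ell}(C^{-B; t_0}_t) C^{B; t_0}_t\big]^{\bullet \ell}_+ +\big[-\varepsilon_{\bullet\ell}(C^{-B; t_0}_t) C^{-B; t_0}_t\big]^{\bullet \ell}_+$. By the sign-coherence of $C^{-B; t_0}_t$, every entry of the $\ell$th column of $-\varepsilon_{\bullet\ell}(C^{-B; t_0}_t) C^{-B; t_0}_t$ is nonpositive, so its positive part vanishes, i.e.\ $\big[-\varepsilon_{\bullet\ell}(C^{-B; t_0}_t) C^{-B; t_0}_t\big]^{\bullet \ell}_+=O_n$. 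This leaves exactly the first line of \eqref{eq:ffront}.

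For the second equality I would instead take $\varepsilon=\varepsilon_{\bullet\ell}\big(C^{B; t_0}_t\big)$. Now the leading term reads $F^{B; t_0}_t\big(J_{\ell}+\big[-\varepsilon_{\bullet\ell}(C^{B; t_0}_t) B_t\big]^{\bullet \ell}_+\big)$, while the remaining terms are $\big[-\varepsilon_{\bullet\ell}(C^{B; t_0}_t) C^{B; t_0}_t\big]^{\bullet \ell}_+ +\big[\varepsilon_{\bullet\ell}(C^{B; t_0}_t) C^{-B; t_0}_t\big]^{\bullet \ell}_+$. Here the sign-coherence of $C^{B; t_0}_t$ forces the $\ell$th column of $-\varepsilon_{\bullet\ell}(C^{B; t_0}_t) C^{B; t_0}_t$ to be entrywise nonpositive, so $\big[-\varepsilon_{\bullet\ell}(C^{B; t_0}_t) C^{B; t_0}_t\big]^{\bullet \ell}_+=O_n$, leaving precisely the second line of \eqref{eq:ffront}.

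The argument is essentially routine once these two substitutions are identified; the only genuine ingredient is the dual use of sign-coherence, applied to $C^{-B; t_0}_t$ in the first case and to $C^{B; t_0}_t$ in the second, to annihilate the ``wrong-sign'' positive-part term. I expect the only point requiring care to be the sign bookkeeping in matching $[-\varepsilon B_t]^{\bullet\ell}_+$ against the target coefficients $\big[\pm\varepsilon_{\bullet\ell}(\,\cdot\,) B_t\big]^{\bullet\ell}_+$, so I would verify directly that the choices $\varepsilon=-\varepsilon_{\bullet\ell}\big(C^{-B; t_0}_t\big)$ and $\varepsilon=\varepsilon_{\bullet\ell}\big(C^{B; t_0}_t\big)$ reproduce the stated coefficients on both $B_t$ and the $C$-matrix terms.
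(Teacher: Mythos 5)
Your proposal is correct and is exactly the paper's argument: the authors prove Proposition~\ref{pr:ffront} by substituting $\varepsilon=-\varepsilon_{\bullet\ell}\big(C^{-B;t_0}_t\big)$ and $\varepsilon=\varepsilon_{\bullet\ell}\big(C^{B;t_0}_t\big)$ into \eqref{eq:ffr}, with sign-coherence (applied to $C^{-B;t_0}_t$ and $C^{B;t_0}_t$ respectively) killing the unwanted positive-part term. Your write-up merely makes explicit the vanishing step that the paper leaves implicit.
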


\begin{proof}Substituting $\varepsilon=-\varepsilon_{\bullet \ell} \big(C^{-B; t_0}_t\big)$ or $\varepsilon=\varepsilon_{\bullet \ell}\big(C^{B; t_0}_t\big)$ for~\eqref{eq:ffr}, we obtain~\eqref{eq:ffront}.
\end{proof}

\section{Initial-seed mutations}\label{section3}
\subsection[Initial-seed mutations of functions $\Ycal$ and $\Xcal$]{Initial-seed mutations of functions $\boldsymbol{\Ycal}$ and $\boldsymbol{\Xcal}$}\label{section3.1}

We introduce the concept of the \emph{initial-seed mutations} which appears in \cite{fziv, nz, rs} in the following way. Let $\QQ_{\text{sf}}(\yy)$ be the universal semifield with formal variables $\yy=(y_1,\dots,y_n)$ in Section~\ref{section2.1}. Let $\{\Sigma_t\}_{t\in\TT_n}$ be the cluster pattern with coefficients in $\QQ_{\text{sf}}(\yy)$ where the initial coefficients $\yy_{t_0}$ are taken as the above formal variables $\yy$. Then, recursively applying the mutations~\eqref{eq:y-mutation} or~\eqref{eq:y-mutation2} from the initial coefficients, $y_{i;t}$ are written as a rational function of~$\yy$:
\begin{gather*}
y_{i;t}=\Ycal_{i;t}^{B;t_0}(\yy)\in \QQ_{\text{sf}}(\yy).
\end{gather*}
Similarly, recursively applying the mutations \eqref{eq:x-mutation}, $x_{i;t}\in\Fcal$ are written as a rational function of the initial cluster variables $\xx_{t_0}=\xx$ with coefficients in $\QQ(\QQ_{\text{sf}}(\yy))$:
\begin{gather*}
x_{i;t}=\Xcal_{i;t}^{B;t_0}(\xx)\in \QQ(\QQ_{\text{sf}}(\yy))(\xx).
\end{gather*}
Then, for any cluster pattern $\{\Sigma_t\}_{t\in\TT_n}$ with coefficients in $\PP$, we recover $x_{i;t}$ and $y_{i;t}$ by the specialisation $\pi\colon \QQ_{\text{sf}}(\yy)\rightarrow\PP$ with $y_i$ setting to be the initial coefficients of $\Sigma_{t_0}$. Let $t_1\in\TT_n$ be the vertex with \begin{xy}(0,1)*+{t_0}="A",(10,1)*+{t_1}="B",\ar@{-}^{k}"A";"B" \end{xy}and let $B_1=\mu_k(B)$. Then, the rational functions $\Ycal_{i;t}^{B;t_0}(\yy)$ and~$\Ycal_{i;t}^{B_1;t_1}(\yy)$ are related by
\begin{gather*}
\Ycal_{i;t}^{B_1;t_1}(\yy)=\rho_k\big(\Ycal_{i;t}^{B;t_0}(\yy)\big),
\end{gather*}
where $\rho_k$ is the semifield automorphism of $\QQ_{\text{sf}}(\yy)$ defined by
\begin{align*}
\rho_k\colon \ \QQ_{\text{sf}}(\yy)&\rightarrow \QQ_{\text{sf}}(\yy), \notag \\
y_j&\mapsto
\begin{cases}
y_{k}^{-1} &\text{if $j=k$,} \\
y_j y_k^{[b_{kj}]_+}(y_k \oplus 1)^{-b_{kj}} &\text{otherwise.}
\end{cases}
\end{align*}
Similarly, the rational functions $\Xcal_{i;t}^{B;t_0}(\xx)$ and $\Xcal_{i;t}^{B_1;t_1}(\xx)$ are related by
\begin{gather*}
\Xcal_{i;t}^{B_1;t_1}(\xx)=\rho_k\big(\Xcal_{i;t}^{B;t_0}(\xx)\big),
\end{gather*}
where $\rho_k$ is the field automorphism of $\QQ(\QQ_{\text{sf}}(\yy))(\xx)$ defined by
\begin{align*}
\rho_k\colon \ \QQ(\QQ_{\text{sf}}(\yy))(\xx)&\rightarrow \QQ(\QQ_{\text{sf}}(\yy))(\xx), \notag \\
y_j&\mapsto
\begin{cases}
y_{k}^{-1} &\text{if $j=k$}, \\
y_j y_k^{[b_{kj}]_+}(y_k \oplus 1)^{-b_{kj}} &\text{otherwise},
\end{cases}\\
x_j&\mapsto \begin{cases}\dfrac{y_k\mathop{\prod}\limits_{i=1}^n x_i^{[b_{ik}]_+}+\mathop{\prod}\limits_{i=1}^n x_i^{[-b_{ik}]_+}}{(y_k\oplus 1)x_k} &\text{if $j=k$},\\
x_j &\text{otherwise}.
\end{cases}
\end{align*}
We call them the \emph{initial-seed mutations of the functions $\Ycal$ and $\Xcal$}.

\subsection[Initial-seed mutations without sign-coherence of $C$-matrices]{Initial-seed mutations without sign-coherence of $\boldsymbol{C}$-matrices}\label{section3.2}
We use the notations in Section 3.1 continuously. By Proposition \ref{pr:separation}, we have
\begin{gather*}
\Xcal_{j;t}^{B;t_0}(\xx) = \left(\prod_{k=1}^n x_k^{g^{B;t_0}_{kj;t}}\right) \frac{F_{j;t}^{B;t_0}(\hat y_1, \dots, \hat y_n)}{F_{j;t}^{B;t_0}(y_1, \dots, y_n)}, \\
\Ycal_{j;t}^{B;t_0}(\yy) =\prod_{k=1}^n y_{k}^{c^{B;t_0}_{kj;t}}
\prod_{k=1}^n \big(F_{k;t}^{B;t_0}(y_{1}, \dots, y_{n})\big)^{b_{kj;t}}.
\end{gather*}
As with the final-seed mutation, we will define \emph{the initial-seed mutations in direction $k$ of the $C$-matrices} (resp.\ \emph{the $G$-matrices, the $F$-polynomials, the $F$-matrices}) as transformations from~$C_t^{B;t_0}$ to $C_t^{B_1;t_1}$ (resp.\ from $G_t^{B;t_0}$ to $G_t^{B_1;t_1}$, from $F_{j;t}^{B;t_0}$ to $F_{j;t}^{B_1;t_1}$, from~$F_t^{B;t_0}$ to~$F_t^{B_1;t_1}$).
We will deduce the initial-seed mutations of the $C$-, $G$-matrices, the $F$-polynomials and the $F$-matrices. In order to describe these initial-seed mutations, we introduce the \emph{$H$-matrices} according to~\cite{fziv}.

\begin{Definition} [{\cite[equation~(6.16)]{fziv}}] Let $B$ be any initial exchange matrix at $t_0$. Then, for any $t$, the $(i, j)$ entry of $H^{B; t_0}_t=\big(h_{ij; t}^{B;t_0}\big)$ is given by
\begin{gather*}
	u^{h_{ij; t}^{B;t_0}}=F^{B; t_0}_{j;t}|_{\text{Trop}(u)}\big(u^{[-b_{i1}]_+}, \dots, u^{-1}, \dots, u^{[-b_{in}]_+}\big) \qquad \big(\text{$u^{-1}$ in the $i$th position}\big).
\end{gather*}
The matrix $H^{B; t_0}_t$ is called the \emph{$H$-matrix} at $t$. \end{Definition}The following fact holds \cite[Proof of Proposition~6.8]{fziv}.
\begin{Lemma} \label{lem:horigin} We have the following equality:
\begin{align}\label{eq:horigin}
 {y'_k}^{h_{kj; t}^{B;t_0}}=F^{B; t_0}_{j;t}|_{{\rm Trop}(y'_1, \dots, y'_n)}(y_1, \dots, y_n),
\end{align}
where $(y'_1, \dots, y'_n)$ are the coefficients at $t_1$ connected with $t_0$ by an edge labeled $k$ in $\TT_n.$
\end{Lemma}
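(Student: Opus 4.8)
The plan is to carry out the whole computation inside the tropical semifield $\trop\big(y'_1,\dots,y'_n\big)$, reading the $y'_i$ as the principal coefficients \emph{at} $t_1$ and the initial coefficients $y_1,\dots,y_n$ at $t_0$ as the elements of this semifield produced by mutating back in direction $k$. First I would make that inverse mutation explicit. Since $B_{t_1}=\mu_k(B)$ has $k$th row $(-b_{k1},\dots,-b_{kn})$ by \eqref{eq:matrix-mutation}, the coefficient mutation \eqref{eq:y-mutation} from $t_1$ to $t_0$ gives $y_k=(y'_k)^{-1}$ and $y_j=y'_j(y'_k)^{[-b_{kj}]_+}(y'_k\oplus 1)^{b_{kj}}$ for $j\neq k$. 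The decisive simplification is that in $\trop\big(y'_1,\dots,y'_n\big)$ one has $y'_k\oplus 1=1$, whence
\[
y_k=(y'_k)^{-1},\qquad y_j=y'_j(y'_k)^{[-b_{kj}]_+}\quad(j\neq k).
\]
I would emphasise that this differs from the expression of the same group elements inside $\trop(y_1,\dots,y_n)$, where instead $y'_k\oplus 1=y_k^{-1}\neq 1$; keeping the two tropical structures apart is the heart of the matter.

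Next I would evaluate $F^{B;t_0}_{j;t}(y_1,\dots,y_n)$ tropically in $\trop(y')$. Writing the $F$-polynomial as a sum of monomials $\prod_m y_m^{\alpha_m}$ (with positive integer coefficients, which collapse under $\oplus$), the substitution above turns each monomial into $\big(\prod_{m\neq k}(y'_m)^{\alpha_m}\big)(y'_k)^{-\alpha_k+\sum_{j\neq k}\alpha_j[-b_{kj}]_+}$, and the tropical sum replaces every exponent by its minimum over the occurring $\alpha$. Hence $F^{B;t_0}_{j;t}|_{\trop(y')}(y_1,\dots,y_n)=\prod_m(y'_m)^{\gamma_m}$ is a single monomial, and it remains to identify the exponents $\gamma_m$.

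To finish I would treat the $k$th exponent and the others separately. For $m\neq k$ the $y'_m$-exponent of $\prod_m y_m^{\alpha_m}$ is simply $\alpha_m$, so $\gamma_m=\min_\alpha\alpha_m=0$ by \eqref{eq:F=1}, which says that the evaluation of $F^{B;t_0}_{j;t}$ in $\trop(y_1,\dots,y_n)$ is $1$. For $m=k$ I would apply the semifield homomorphism $\trop(y'_1,\dots,y'_n)\to\trop(u)$ sending $y'_k\mapsto u$ and $y'_m\mapsto 1$ $(m\neq k)$. By the displayed formulas it sends $y_k\mapsto u^{-1}$ and $y_j\mapsto u^{[-b_{kj}]_+}$, i.e.\ exactly the substitution defining the $H$-matrix, so the image of $F^{B;t_0}_{j;t}|_{\trop(y')}(y_1,\dots,y_n)$ equals $u^{h^{B;t_0}_{kj;t}}$; as that image is also $u^{\gamma_k}$, we get $\gamma_k=h^{B;t_0}_{kj;t}$. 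Together with $\gamma_m=0$ $(m\neq k)$ this yields $F^{B;t_0}_{j;t}|_{\trop(y')}(y_1,\dots,y_n)=(y'_k)^{h^{B;t_0}_{kj;t}}$, which is \eqref{eq:horigin}.

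The step I expect to be most delicate is the very first one: pinning the exponents to $[-b_{kj}]_+$ rather than $[b_{kj}]_+$. That sign is governed entirely by evaluating $y'_k\oplus 1$ in the right semifield — it is $1$ in $\trop(y')$ but $y_k^{-1}$ in $\trop(y)$ — and an unnoticed slip between the two tropical structures would flip the exponent and destroy the agreement with the $H$-matrix. Once the inverse mutation is recorded correctly, the remainder is a bookkeeping of exponents resting on the two standard inputs, \eqref{eq:F=1} and the definition of $H^{B;t_0}_t$.
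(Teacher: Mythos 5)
Your argument is correct and follows essentially the same route as the paper's proof: derive the backward coefficient mutation $y_k=(y'_k)^{-1}$, $y_j=y'_j(y'_k)^{[-b_{kj}]_+}$ inside $\trop(y'_1,\dots,y'_n)$, then use \eqref{eq:F=1} to eliminate the $y'_m$-exponents for $m\neq k$ and identify the remaining $y'_k$-exponent with the defining substitution of $h^{B;t_0}_{kj;t}$. Your monomial-by-monomial bookkeeping and the projection $\trop(y'_1,\dots,y'_n)\to\trop(u)$ simply make explicit the paper's one-line appeal to \eqref{eq:F=1} and the definition of the $H$-matrix.
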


\begin{proof}Consider the cluster pattern with coefficients in $\trop(y'_1, \dots, y'_n)$. Let $\yy=(y_1, \dots, y_n)$ be the coefficients at $t_0$. Then, $\yy$ and $\yy'$ have the following relation:
\begin{align} \label{eq:y=y'}
	y_i = \begin{cases}
		{y'_k}^{-1} & \text{if} \ i=k, \\
		{y'_i}{y'_k}^{[-b_{ki}]_+} & \text{if} \ i\neq k.
	\end{cases}
\end{align}
Therefore, for any $j\in\{1, \dots, n\}$, we have
\begin{align*}
F^{B; t_0}_{j;t}|_{\text{Trop}(y'_1, \dots, y'_n)}(y_1, \dots, y_n)&\;\:=F^{B; t_0}_{j;t}|_{\text{Trop}(y'_1, \dots, y'_n)}\big(y'_1{y'_k}^{[-b_{k1}]_+}, \dots, {y'_k}^{-1}, \dots, y'_n{y'_k}^{[-b_{kn}]_+}\big)\\
&\overset{\eqref{eq:F=1}}{=}F^{B; t_0}_{j;t}|_{\text{Trop}(y'_k)}\big({y'_k}^{[-b_{k1}]_+}, \dots, {y'_k}^{-1}, \dots, {y'_k}^{[-b_{kn}]_+}\big)\\
&\;\:={y'_k}^{h_{kj; t}^{B;t_0}}.\tag*{\qed}
\end{align*}\renewcommand{\qed}{}
\end{proof}

The initial-seed mutations of the $C$- and $G$-matrices are given as follows, where the latter was given in \cite[Proposition 6.8]{fziv}:

\begin{Proposition} \label{pr:crear}
Let \begin{xy}(0,1)*+{t_0}="A",(10,1)*+{t_1}="B",\ar@{-}^{k}"A";"B" \end{xy} in $\TT_n$, $\mu_k (B)=B_1$ and $\varepsilon \in\{\pm 1\}$. Then, for any $t$, we have
\begin{gather}\label{eq:crear}
C^{B_1; t_1}_t=\big(J_k+[-\varepsilon B]^{k\bullet}_+\big)C^{B; t_0}_t+H_t(\varepsilon)^{k\bullet} B_t, \\
\label{eq:grear}
G^{B_1; t_1}_t=\big(J_k+[\varepsilon B]^{\bullet k}_+\big)G^{B; t_0}_t-BH_t(\varepsilon)^{k\bullet},
\end{gather}
where $H_t(+)=H^{B; t_0}_t$, $H_t(-)=H^{B_1; t_1}_t$.
\end{Proposition}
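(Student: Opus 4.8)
The plan is to establish the two recurrences \eqref{eq:crear} and \eqref{eq:grear} by relating the initial-seed mutation (the change $B,t_0 \rightsquigarrow B_1,t_1$) to the already-understood final-seed recurrences, using the $H$-matrices as the bookkeeping device that records how the $F$-polynomials transform under the initial-seed mutation. The key input is Lemma~\ref{lem:horigin}, which expresses the entries $h_{kj;t}^{B;t_0}$ as the tropical evaluation of $F_{j;t}^{B;t_0}$ in the coefficients $(y_1',\dots,y_n')$ at $t_1$. Since the recurrence must be proved for all $t\in\TT_n$, I expect to argue by induction on the distance from $t$ to $t_0$ in $\TT_n$, checking the base case $t=t_0$ (where $C^{B;t_0}_{t_0}=I_n$ and $H_{t_0}$ is explicit) and then the inductive step along an edge \begin{xy}(0,1)*+{t}="A",(10,1)*+{t'}="B",\ar@{-}^{\ell}"A";"B"\end{xy}.

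For the inductive step I would proceed as follows. First, apply the final-seed recurrence \eqref{eq:c-frontmutation} to both $C^{B;t_0}_t$ and $C^{B_1;t_1}_t$, keeping in mind that the exchange matrices along $\TT_n$ differ between the two cluster patterns only through their initial value ($B$ versus $B_1=\mu_k(B)$), while the matrix mutation \eqref{eq:matrix-mutation} propagates identically. Substituting the purported formula \eqref{eq:crear} for $C^{B_1;t_1}_t$ into the left-hand side and the final-seed recurrence for $C^{B;t_0}_{t'}$ on the right, the problem reduces to verifying a matrix identity that involves the transformation of the $H$-matrix along the edge $\ell$. The crucial ingredient is therefore a mutation rule for $H_t(\varepsilon)$ obtained by tropicalizing the $F$-polynomial recurrence \eqref{eq:f-frontmutation}: applying the homomorphism to $\trop(y_1',\dots,y_n')$ as in Lemma~\ref{lem:horigin} converts the recurrence \eqref{eq:f-frontmutation} into an additive recurrence for the entries $h_{kj;t}^{B;t_0}$, with the $[b_{i\ell;t}]_+$ and $[c_{i\ell;t}^{B;t_0}]_+$ exponents governing the outcome. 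The freedom in $\varepsilon\in\{\pm1\}$ should then be absorbed exactly as in Remark~\ref{chooseep}, via the identity \eqref{eq:b--b}, so that the two choices $H_t(+)=H^{B;t_0}_t$ and $H_t(-)=H^{B_1;t_1}_t$ give equivalent formulas.

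The relation \eqref{eq:grear} for the $G$-matrices I would obtain in parallel, starting from the final-seed recurrence \eqref{eq:g-frontmutation} and using the compatibility \eqref{GB=BC} between $G$- and $C$-matrices (that is, $G^{B;t_0}_tB_t=BC^{B;t_0}_t$) to convert the $C$-based correction term into a $B$-multiplied $H$-term; the $\ZZ^n$-grading argument used elsewhere in the paper (with $\deg x_i=\ee_i$, $\deg y_i=-\mathbf b_i$, hence $\deg\hat y_i=0$) provides the cleanest route, since comparing gradings in the separation formula \eqref{eq:xjt=F/F} turns the multiplicative $F$-polynomial data directly into the additive $G$-matrix correction. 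I expect the main obstacle to be the correct handling of the sign parameter $\varepsilon$ and the identification of which $H$-matrix ($H^{B;t_0}_t$ or $H^{B_1;t_1}_t$) appears for each sign: because the statement packages both signs into the single symbol $H_t(\varepsilon)$, one must verify that the tropicalized recurrence genuinely yields $\varepsilon$-independence of the right-hand sides, which is where the interplay between the initial-seed change of $B$ and the tropical evaluation at $t_1$ is most delicate. Once that bookkeeping is in place, the remaining manipulations are routine applications of the matrix-notation identities $AB^{\bullet k}=(AB)^{\bullet k}$ and $A^{k\bullet}B=(AB)^{k\bullet}$ recorded at the start of Section~\ref{section2.2}.
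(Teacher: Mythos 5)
Your plan diverges from the paper's proof and, as written, has two genuine gaps. The paper does not induct on the distance from $t$ to $t_0$ at all: for \eqref{eq:crear} it works with the cluster pattern with coefficients in $\operatorname{Trop}(y'_1,\dots,y'_n)$ (principal at $t_1$), writes the single coefficient $y_{j;t}$ once via \eqref{eq:c-coef} at the initial vertex $t_1$ and once via the separation formula \eqref{eq:Y-F} at the initial vertex $t_0$, substitutes the relation \eqref{eq:y=y'} and Lemma~\ref{lem:horigin}, and compares exponents of the $y'_i$; this yields \eqref{eq:crear} for all $t$ simultaneously, with no final-seed recurrences and no mutation rule for the $H$-matrices. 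Your inductive scheme instead hinges entirely on ``a mutation rule for $H_t(\varepsilon)$ obtained by tropicalizing \eqref{eq:f-frontmutation}'' and on the compatibility of that rule with the final-seed recurrences for $C^{B;t_0}_t$ and $C^{B_1;t_1}_t$. That rule is never written down, and deriving it plus verifying the resulting matrix identity is the whole content of the proof under your approach; as it stands the proposal is a plan, not a proof.

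The second gap is the claim that the freedom in $\varepsilon$ is ``absorbed exactly as in Remark~\ref{chooseep}, via the identity \eqref{eq:b--b}.'' That cannot work here: the two cases of \eqref{eq:crear} involve two \emph{different} matrices, $H_t(+)=H^{B;t_0}_t$ and $H_t(-)=H^{B_1;t_1}_t$, so equating the $\varepsilon=+1$ and $\varepsilon=-1$ right-hand sides is equivalent to the nontrivial relation $\big(H^{B_1;t_1}_t-H^{B;t_0}_t\big)^{k\bullet}B_t=-\big(G^{B;t_0}_tB_t\big)^{k\bullet}$, which does not follow from $b=[b]_+-[-b]_+$ alone. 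The paper sidesteps this by proving the $\varepsilon=+1$ case directly and obtaining the $\varepsilon=-1$ case by interchanging the roles of $t_0$ and $t_1$ (which is what naturally produces $H^{B_1;t_1}_t$); you would need the same symmetry argument, or an independent proof of the displayed relation. Your sketch for \eqref{eq:grear} (the $\ZZ^n$-grading comparison on \eqref{eq:xjt=F/F}) is essentially the route of \cite[Proposition~6.8]{fziv}, which the paper simply cites, but note that carrying it out requires the initial-seed transformation \eqref{eq:fpolyrear} of the $F$-polynomials to supply the prefactor whose grading is the $-BH_t(\varepsilon)^{k\bullet}$ term; the bare statement that $\deg\hat y_i=0$ is not enough.
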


\begin{proof}We denote $C^{B; t_0}_t=(c_{ij;t})$, $ C^{B_1; t_1}_t=(c'_{ij;t})$, $H^{B; t_0}_t=(h_{ij; t})$, and $H^{B_1; t_1}_t=(h'_{ij; t})$. The equation~\eqref{eq:grear} is just \cite[Proposition~6.8]{fziv}, rewritten in matrix form.

Let us show \eqref{eq:crear}. Consider the same cluster pattern as in the proof of Lemma~\ref{lem:horigin}. Then, applying~\eqref{eq:Y-F} and~\eqref{eq:c-coef} to any coefficient $y_{j;t}$, we have
\begin{gather} \label{eq:y'=y}
\prod_{i=1}^n {y'_i}^{c'_{ij; t}}=\prod_{i=1}^n {y_i}^{c_{ij; t}} \prod_{i=1}^n F^{B; t_0}_{i; t}|_{\text{Trop}(y'_1, \dots, y'_n)}(y_1, \dots, y_n)^{b_{ij; t}}.
\end{gather}
Substituting \eqref{eq:y=y'} for \eqref{eq:y'=y} and using \eqref{eq:horigin}, we have
\begin{gather}
	\prod_{i=1}^n {y'_i}^{c'_{ij; t}}=\left(\prod_{i\neq k} {y'_i}^{c_{ij; t}}{y'_k}^{[-b_{ki}]_+c_{ij; t}}\right) {y'_k}^{-c_{kj; t}} \prod_{i=1}^n {y'_k}^{h_{ki; t}b_{ij; t}}.\label{eq:y=yyyy}
\end{gather}
Comparing exponents of $y'_i$ of the both sides of \eqref{eq:y=yyyy}, we obtain
\begin{gather*}
	c'_{ij; t}=\begin{cases}
\displaystyle -c_{kj; t}+\mathop{\sum}\limits_{\ell=1}^n [-b_{k\ell}]_+c_{\ell j;t} + \mathop{\sum}\limits_{\ell=1}^n h_{k\ell; t}b_{\ell j; t} & \text{if} \ i=k, \\
c_{ij; t} &\text{if} \ i \neq k.
\end{cases}
\end{gather*}
Also, by interchanging $t_0$ and $t_1$, we get
\begin{gather*}
	c'_{ij; t}=\begin{cases}
\displaystyle -c_{kj; t}+\mathop{\sum}\limits_{\ell=1}^n [b_{k\ell}]_+c_{\ell j; t} + \mathop{\sum}\limits_{\ell=1}^n h'_{k\ell; t}b_{\ell j; t} & \text{if} \ i=k, \\
c_{ij; t} &\text{if} \ i\neq k.
\end{cases}
\end{gather*}
Thus we have \eqref{eq:crear}.
\end{proof}

The initial-seed mutations of the $F$-polynomials were given in \cite[Proof of Proposition 6.8]{fziv} as follows:
\begin{Proposition}[{\cite[equation~(6.21)]{fziv}}]\label{pr:fpolyrear}
Let \begin{xy}(0,1)*+{t_0}="A",(10,1)*+{t_1}="B",\ar@{-}^{k}"A";"B" \end{xy} in $\TT_n$ and $\mu_k(B)=B_1$.
Then, for any $j\in\{1, \dots, n\}$ and $t\in\TT_n$, the polynomials $F^{B; t_0}_{j;t}(\yy)$ and $F^{B_1; t_1}_{j;t}(\yy)$ are related by
\begin{align}
F^{B_1; t_1}_{j;t}(y_1, \dots, y_n)&=(1+y_k)^{g_{kj;t}^{B;t_0}}{y_k}^{-h_{kj;t}^{B;t_0}} \label{eq:fpolyrear}\\
& \quad \times F^{B; t_0}_{j;t}\big(y_1{y_k}^{[-b_{k1}]_+}(y_k+1)^{b_{k1}}, \dots, {y_k}^{-1}, \dots, y_n{y_k}^{[-b_{kn}]_+}(y_k+1)^{b_{kn}}\big),\nonumber
\end{align}
where ${y_k}^{-1}$ is in the $k$th position.
\end{Proposition}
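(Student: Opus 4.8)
The plan is to realise both $F$-polynomials as specialisations of one and the same cluster variable in a cluster pattern with principal coefficients, and then to read off the identity by applying the separation formula \eqref{eq:xjt=F/F} at the two initial vertices $t_0$ and $t_1$. Concretely, I would fix the cluster pattern $\{\Sigma_t\}_{t\in\TT_n}$ with principal coefficients at $t_1$, so that $\PP=\trop(y_1,\dots,y_n)$, the coefficient tuple at $t_1$ is the free generating family $\yy=(y_1,\dots,y_n)$, and $B_{t_1}=B_1$, whence $B_{t_0}=\mu_k(B_1)=B$. The point of choosing $t_1$ (rather than $t_0$) as the principal vertex is that it makes $F^{B_1;t_1}_{j;t}$ the object extracted ``for free'', while the superscripts $B;t_0$ attached to the $G$- and $H$-matrices appear on the other side, exactly as in the target formula.

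First I would extract the left-hand side. Applying \eqref{eq:xjt=F/F} at the initial vertex $t_1$ and using $F^{B_1;t_1}_{j;t}|_{\PP}=1$ from \eqref{eq:F=1}, the cluster variable becomes $x_{j;t}=\big(\prod_i x_{i;t_1}^{g^{B_1;t_1}_{ij;t}}\big)F^{B_1;t_1}_{j;t}(\hat y_{1;t_1},\dots,\hat y_{n;t_1})$; specialising the initial cluster at $t_1$ to $(1,\dots,1)$ sends each $\hat y_{i;t_1}$ to $y_i$ and each $x_{i;t_1}$ to $1$, so $x_{j;t}$ specialises to $F^{B_1;t_1}_{j;t}(y_1,\dots,y_n)$.

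Next I would compute the same specialisation of $x_{j;t}$ through the separation formula at $t_0$, where it reads $x_{j;t}=\big(\prod_i x_{i;t_0}^{g^{B;t_0}_{ij;t}}\big)F^{B;t_0}_{j;t}(\hat y_{1;t_0},\dots,\hat y_{n;t_0})/F^{B;t_0}_{j;t}|_{\PP}(\yy_{t_0})$. Three evaluations at the all-ones cluster of $t_1$ are needed. Mutating in direction $k$ from $t_1$ to $t_0$ via \eqref{eq:x-mutation}, only $x_{k;t_0}$ is nontrivial and, since $y_k\oplus 1=1$ in $\trop(\yy)$, it specialises to $1+y_k$; thus the factor $\prod_i x_{i;t_0}^{g^{B;t_0}_{ij;t}}$ becomes $(1+y_k)^{g^{B;t_0}_{kj;t}}$. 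The tropical coefficient mutation \eqref{eq:y-mutation} produces $\yy_{t_0}$ exactly as in \eqref{eq:y=y'}, and combining this with $\prod_l x_{l;t_0}^{b_{li}}\mapsto(1+y_k)^{b_{ki}}$ shows that each $\hat y_{i;t_0}$ specialises to $y_i y_k^{[-b_{ki}]_+}(y_k+1)^{b_{ki}}$ (and to $y_k^{-1}$ for $i=k$), i.e.\ to precisely the arguments occurring in the statement. Finally the denominator $F^{B;t_0}_{j;t}|_{\PP}(\yy_{t_0})$ equals $y_k^{h^{B;t_0}_{kj;t}}$ by Lemma~\ref{lem:horigin} (equation \eqref{eq:horigin}). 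Equating the two specialisations of $x_{j;t}$ then yields \eqref{eq:fpolyrear}.

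The step I expect to be the main obstacle is the denominator identity $F^{B;t_0}_{j;t}|_{\PP}(\yy_{t_0})=y_k^{h^{B;t_0}_{kj;t}}$: this is where the $H$-matrix genuinely enters, and one must be careful that the tropical evaluation is taken in $\trop(\yy)=\trop(\yy_{t_1})$ while the coefficients being substituted are $\yy_{t_0}$, so that \eqref{eq:y=y'} and \eqref{eq:horigin} apply verbatim. A secondary point to watch is the bookkeeping of the two additions — tropical $\oplus$ (forcing $y_k\oplus 1=1$) in the denominators produced by \eqref{eq:x-mutation} and \eqref{eq:y-mutation}, versus ordinary $+$ inside the $F$-polynomial evaluated in $\Fcal$ — but once this distinction is respected the remaining computation is routine.
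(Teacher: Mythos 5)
The paper does not actually prove Proposition~\ref{pr:fpolyrear}: it is quoted from \cite[equation~(6.21)]{fziv} (and \cite[Proof of Proposition~6.8]{fziv}) without an argument, so there is no ``paper proof'' to match. Your proposal is a correct, self-contained reconstruction of the Fomin--Zelevinsky derivation using only ingredients already present in the paper. The key points all check out: the separation formula \eqref{eq:xjt=F/F} may indeed be applied with either $t_0$ or $t_1$ as reference vertex of the same pattern with principal coefficients at $t_1$; applying it at $t_1$ and setting $\xx_{t_1}=(1,\dots,1)$ gives $F^{B_1;t_1}_{j;t}(\yy)$ because the denominator is $1$ by \eqref{eq:F=1}; applying it at $t_0$, the mutation \eqref{eq:x-mutation} gives $x_{k;t_0}\mapsto 1+y_k$ (since $y_k\oplus 1=1$ in $\trop(\yy)$), the coefficients $\yy_{t_0}$ are given by \eqref{eq:y=y'} so that $\hat y_{i;t_0}\mapsto y_iy_k^{[-b_{ki}]_+}(1+y_k)^{b_{ki}}$ with $\hat y_{k;t_0}\mapsto y_k^{-1}$ (using $b_{kk}=0$), and the denominator $F^{B;t_0}_{j;t}|_{\PP}(\yy_{t_0})=y_k^{h^{B;t_0}_{kj;t}}$ is exactly Lemma~\ref{lem:horigin}, whose setup (coefficients in $\trop(y'_1,\dots,y'_n)$ with $\yy'$ the coefficients at $t_1$) coincides with yours. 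The specialisation $x_{i;t_1}\mapsto 1$ is legitimate since all quantities are Laurent polynomials in $\xx_{t_1}$ over $\ZZ[y_1^{\pm1},\dots,y_n^{\pm1}]$ and no denominator vanishes, and the resulting identity of rational functions in the formal variables $y_i$ is the asserted polynomial identity. The only caveat worth recording is that Proposition~\ref{pr:separation} is stated in the paper for the fixed initial vertex $t_0$, so you should say explicitly that it holds for any reference vertex of the pattern (which is immediate, since restricting the pattern to a new base vertex again yields a cluster pattern); with that remark added, the argument is complete.
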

The initial-seed mutation of the $F$-matrices are deduced from Proposition \ref{pr:fpolyrear} as follows:
\begin{Proposition}\label{pr:frear}
Let \begin{xy}(0,1)*+{t_0}="A",(10,1)*+{t_1}="B",\ar@{-}^{k}"A";"B" \end{xy} in $\TT_n$, $\mu_k(B)=B_1$ and $\varepsilon \in \{\pm 1\}$. Then for any $t$, the matrices $F^{B; t_0}_t$ and $F^{B_1; t_1}_t$ are related by
\begin{align}\label{eq:frear}
	F^{B_1; t_1}_t&=\big(J_k+[\varepsilon B]^{k\bullet}_+\big)F^{B; t_0}_t + (\varepsilon G^{B; t_0}_t)^{k\bullet} -H^{-B; t_0}_t(\varepsilon)^{k\bullet}-H^{B; t_0}_t(\varepsilon)^{k\bullet}\\
\label{eq:frear2}
&=\big(J_k+[-\varepsilon B]^{k\bullet}_+\big)F^{B; t_0}_t + (\varepsilon G^{-B; t_0}_t)^{k\bullet} -H^{-B; t_0}_t(\varepsilon)^{k\bullet}-H^{B; t_0}_t(\varepsilon)^{k\bullet}.
\end{align}
\end{Proposition}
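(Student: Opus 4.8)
The plan is to deduce the statement from the initial-seed mutation of the $F$-polynomials, Proposition~\ref{pr:fpolyrear}, by tropicalizing. Concretely, I would apply the semifield homomorphism $\pi\colon\QQsf(y_1,\dots,y_n)\to\trop\big(y_1^{-1},\dots,y_n^{-1}\big)$, $y_i\mapsto y_i$, to both sides of~\eqref{eq:fpolyrear}. By~\eqref{eq:tropicalmaximaldegree} the left-hand side becomes $\prod_i y_i^{f_{ij;t}^{B_1;t_1}}$, so reading off the exponent of each $y_i$ recovers the $j$th column $\ff_{j;t}^{B_1;t_1}$ of $F_t^{B_1;t_1}$. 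Thus it suffices to tropicalize the right-hand side of~\eqref{eq:fpolyrear} and compare exponents. Since Section~\ref{section3.2} works without sign-coherence, note that \eqref{eq:tropicalmaximaldegree} is an unconditional statement about maximal degrees, so no appeal to Proposition~\ref{eqfc} is needed.

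On the right-hand side, $\pi$ is a semifield homomorphism and every factor is subtraction-free, so $\pi(1+y_k)=1\oplus y_k=y_k$ (addition in $\trop\big(y_1^{-1},\dots,y_n^{-1}\big)$ is the coordinatewise maximum of the $y_i$-exponents), and the prefactor $(1+y_k)^{g_{kj;t}^{B;t_0}}y_k^{-h_{kj;t}^{B;t_0}}$ contributes $y_k^{g_{kj;t}^{B;t_0}-h_{kj;t}^{B;t_0}}$. The substituted $F$-polynomial becomes the tropical evaluation $F^{B;t_0}_{j;t}|_{\trop(y_1^{-1},\dots,y_n^{-1})}(z_1,\dots,z_n)$, where $\pi$ sends the $k$th argument to $z_k=y_k^{-1}$ and, using~\eqref{eq:b--b} and $\pi(y_k+1)=y_k$, the $i$th argument to $z_i=y_iy_k^{[b_{ki}]_+}$ for $i\neq k$. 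Writing $F^{B;t_0}_{j;t}=\sum_{\mathbf m}c_{\mathbf m}\prod_i y_i^{m_i}$, each monomial maps to $\prod_{i\neq k}y_i^{m_i}\cdot y_k^{-m_k+\sum_{i'\neq k}[b_{ki'}]_+m_{i'}}$, and because $\oplus$ is the coordinatewise maximum the evaluation factorizes coordinate by coordinate: the $y_i$-exponent is $\max_{\mathbf m}m_i=f_{ij;t}^{B;t_0}$ for $i\neq k$, while the $y_k$-exponent is $P:=\max_{\mathbf m}\big(-m_k+\sum_{i'\neq k}[b_{ki'}]_+m_{i'}\big)$.

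The crux is to identify $P$ with an $H$-matrix entry, and I expect this to be the main obstacle, since it is what forces the $H$-matrix of $-B$ to appear alongside that of $B$. For this I would invoke Theorem~\ref{thm:C=C+FB}: by~\eqref{Fpoly=Fpoly} the monomials of $F^{-B;t_0}_{j;t}$ are exactly $\ff_{j;t}^{B;t_0}-\mathbf m$ as $\mathbf m$ runs over the monomials of $F^{B;t_0}_{j;t}$. Substituting this reflection into the defining minimum of $h^{-B;t_0}_{kj;t}$ (the $-B$ instance of the $H$-matrix definition) turns it into $\sum_{i'\neq k}[b_{ki'}]_+f_{i'j;t}^{B;t_0}-f_{kj;t}^{B;t_0}-P$, whence $P=\sum_{i'\neq k}[b_{ki'}]_+f_{i'j;t}^{B;t_0}-f_{kj;t}^{B;t_0}-h^{-B;t_0}_{kj;t}$. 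Collecting the three contributions, the $(k,j)$ entry of the tropicalized right-hand side equals $-f_{kj;t}^{B;t_0}+\sum_{i'\neq k}[b_{ki'}]_+f_{i'j;t}^{B;t_0}+g_{kj;t}^{B;t_0}-h^{-B;t_0}_{kj;t}-h^{B;t_0}_{kj;t}$, while the $i\neq k$ entries satisfy $f_{ij;t}^{B_1;t_1}=f_{ij;t}^{B;t_0}$; since $[b_{kk}]_+=0$, this is precisely~\eqref{eq:frear} with $\varepsilon=1$.

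Finally, the remaining forms follow formally. Formula~\eqref{eq:frear2} with $\varepsilon=1$ is obtained from~\eqref{eq:frear} by rewriting $[b_{ki'}]_+=[-b_{ki'}]_++b_{ki'}$ and absorbing the resulting $B$-term via $G_t^{-B;t_0}=G_t^{B;t_0}+BF_t^{B;t_0}$ from~\eqref{eq:G=G+BF}. The cases $\varepsilon=-1$ are then obtained from the symmetry between the two patterns, that is, by interchanging the roles of $t_0$ and $t_1$ (equivalently $B$ and $B_1=\mu_k(B)$) and re-solving, exactly as in the proof of Proposition~\ref{pr:crear}; under this interchange $H_t^{B;t_0}$ and $H_t^{-B;t_0}$ are replaced by $H_t^{B_1;t_1}=H_t^{B;t_0}(-)$ and $H_t^{-B_1;t_1}=H_t^{-B;t_0}(-)$, which yields the $\varepsilon=-1$ instances of~\eqref{eq:frear} and~\eqref{eq:frear2}.
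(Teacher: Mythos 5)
Your proposal is correct and follows essentially the same route as the paper's proof: both tropicalize \eqref{eq:fpolyrear} in $\trop\big(y_1^{-1},\dots,y_n^{-1}\big)$ and use the $B\leftrightarrow -B$ reflection of the $F$-polynomials (\eqref{Fpoly=Fpoly}, \eqref{Fpoly=Fpoly2}) to identify the extra row-$k$ contribution with $h^{-B;t_0}_{kj;t}$, then compare exponents. The only (harmless) deviations are that you bookkeep with explicit monomial maxima rather than semifield identities, derive \eqref{eq:frear2} via \eqref{eq:b--b} and \eqref{eq:G=G+BF} instead of the substitution $B\mapsto -B$ combined with \eqref{eq:F=F}, and spell out the $\varepsilon=-1$ case by the $t_0\leftrightarrow t_1$ symmetry, which the paper leaves implicit.
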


\begin{proof}Let us show \eqref{eq:frear} in case of $\varepsilon=1$. Substituting \eqref{eq:fpolyrear} for $y_i=y'_i$ and evaluating \eqref{eq:fpolyrear} at $\trop\big({y'_1}^{-1}, \dots, {y'_n}^{-1}\big)$, we have
\begin{align*}
&F^{B_1; t_1}_{j;t}|_{\trop({y'_1}^{-1}, \dots, {y'_n}^{-1})}(y'_1, \dots, y'_n)\\
&\;\:=(1\oplus y'_k)^{g^{B; t_0}_{kj}}{y'_k}^{-h^{B;t_0}_{kj}}\\
&\qquad \times F^{B; t_0}_{j;t}|_{\trop\left({y'_1}^{-1}, \dots, {y'_n}^{-1}\right)}\big(y'_1{y'_k}^{[-b_{k1}]_+}(y'_k\oplus 1)^{b_{k1}}, \dots, {y'_k}^{-1}, \dots, y'_n{y'_k}^{[-b_{kn}]_+}(y'_k\oplus1)^{b_{kn}}\big) \\
&\,\overset{\eqref{eq:y-mutation2}}{=}(1\oplus y'_k)^{g^{B; t_0}_{kj}}{y'_k}^{-h^{B;t_0}_{kj}}\\
&\qquad \times F^{B; t_0}_{j;t}|_{\trop\left({y'_1}^{-1}, \dots, {y'_n}^{-1}\right)}\big(y'_1{y'_k}^{[b_{k1}]_+}\big({y'_k}^{-1}\oplus 1\big)^{b_{k1}}, \dots,\\
& \qquad\qquad {y'_k}^{-1}, \dots, y'_n{y'_k}^{[b_{kn}]_+}({y'_k}^{-1}\oplus1)^{b_{kn}}\big) \\
&\;\:={y'_k}^{g^{B; t_0}_{kj}}{y'_k}^{-h^{B;t_0}_{kj}}F^{B; t_0}_{j;t}|_{\trop\left({y'_1}^{-1}, \dots, {y'_n}^{-1}\right)}\big({y'_1}{y'_k}^{[b_{k1}]_+}, \dots, {y'_k}^{-1}, \dots, y'_n{y'_k}^{[b_{kn}]_+}\big)
\\
&\overset{\eqref{Fpoly=Fpoly2}}{=}{y'_k}^{g^{B; t_0}_{kj}}{y'_k}^{-h^{B;t_0}_{kj}}\frac{F_{j;t}^{-B;t_0}|_{\trop\left({y'_1}^{-1}, \dots, {y'_n}^{-1}\right)}\big({y'_1}^{-1}{y'_k}^{-[b_{k1}]_+}, \dots, y'_k, \dots, {y'_n}^{-1}{y'_k}^{-[b_{kn}]_+}\big)}{F_{j;t}^{-B;t_0}|_{\text{Trop}(y_1, \dots, y_n)}\big(y^{-1}_1, \dots,y^{-1}_n\big)|_{y_i\mapsto {y'_i}{y'_k}^{[b_{ki}]_+}, y_k\mapsto {y'_k}^{-1}}} \\
&\;\;={y'_k}^{g^{B; t_0}_{kj}}{y'_k}^{-h^{B;t_0}_{kj}}{y'_k}^{-h^{-B;t_0}_{kj}}\prod_{i\neq k}\big({y'_i}^{f_{ij}}{y'_k}^{[b_{ki}]_+f_{ij}}\big){y'_k}^{-f_{kj}}.
\end{align*}
Comparing the exponent of both sides, we have
\begin{gather*}
f^{B_1; t_1}_{ij;t}=\begin{cases}
\displaystyle	g^{B; t_0}_{kj;t}-h^{B;t_0}_{kj;t}-h^{-B;t_0}_{kj;t}+\mathop{\sum}\limits_{i=1}^n[b_{ki}]_+ f^{B;t_0}_{ij;t}-f^{B; t_0}_{kj;t} \quad &\text{if} \ i = k,\\
	f^{B; t_0}_{ij;t} \quad &\text{if} \ i \neq k.
\end{cases}
\end{gather*}
Hence we obtain the desired equality \eqref{eq:frear}. Also replacing $B$ with $-B$ in \eqref{eq:frear} and app\-lying~\eqref{eq:F=F} to it, we get \eqref{eq:frear2}.
\end{proof}

\subsection[Initial-seed mutations with sign-coherence of $C$-matrices]{Initial-seed mutations with sign-coherence of $\boldsymbol{C}$-matrices}\label{section3.3}
In this subsection, we reduce the initial-seed mutation formulas by applying the sign-coherence of the $C$-matrices. Let us introduce a duality between the $C$-matrices and the $G$-matrices, which is a result in~\cite{nz},
and give the reduced form of the initial-seed mutations of the $C$- and $G$-matrices.

Under the sign-coherence of the $C$-matrices (Theorem \ref{thm:signs-ci}), we have the following result:
\begin{Proposition} \label{pr:cgrs}\quad
\begin{enumerate}\itemsep=0pt
\item[$(1)$] For any exchange matrix~$B$ and $t_0, t \in \TT_n$, we have
\begin{gather}\label{eq:CG-opponent}
\big(G^{B:t_0}_t\big)^T=C^{B^T_t; t}_{t_0}.
\end{gather}
\item[$(2)$] Let \begin{xy}(0,1)*+{t_0}="A",(10,1)*+{t_1}="B",\ar@{-}^{k}"A";"B" \end{xy} in $\TT_n$, $\mu_k(B)=B_1$ and $\varepsilon\in\{\pm1\}$. Then, we have
\begin{align}
C^{B_1; t_1}_t&=\big(J_k+[-\varepsilon B]^{k\bullet}_+\big)C^{B; t_0}_t-\big[{-}\varepsilon G^{B; t_0}_t\big]^{k\bullet}_+ B_t, \label{eq:crs}\\
G^{B_1; t_1}_t&=\big(J_k+[\varepsilon B]^{\bullet k}_+\big)G^{B; t_0}_t + B\big[{-}\varepsilon G^{B; t_0}_t\big]^{k\bullet}_+. \label{eq:grs}
\end{align}
\item[$(3)$] We have the reduced forms of the initial-seed mutations as follows:
\begin{align}
C^{B_1; t_1}_t&=\big(J_k+\big[{-}\varepsilon_{k\bullet}\big(G^{B; t_0}_t\big) B\big]^{k\bullet}_+\big)C^{B; t_0}_t, \label{eq:crwc}\\
G^{B_1; t_1}_t&=\big(J_k+\big[\varepsilon_{k\bullet}\big(G^{B; t_0}_t\big) B\big]^{\bullet k}_+\big)G^{B; t_0}_t. \label{eq:grwc}
\end{align}
\end{enumerate}
\end{Proposition}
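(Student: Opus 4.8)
The overall plan is to treat part~(1) as the essential deep input, cited from \cite{nz}, and to reduce parts~(2) and~(3) to it by reinterpreting each initial-seed mutation (which moves the \emph{initial} vertex $t_0\mapsto t_1$) as an ordinary final-seed mutation of a transposed cluster pattern. Part~(1) is the Nakanishi--Zelevinsky tropical duality: one proves it by simultaneous induction on the distance $d(t_0,t)$ in $\TT_n$, comparing the recursions \eqref{eq:c-frontmutationsign} and \eqref{eq:g-frontmutationsign} under sign-coherence (Theorem~\ref{thm:signs-ci}); since this is precisely their result I would simply cite it. Two elementary facts are then used repeatedly. First, matrix mutation commutes with transposition, $\mu_k(B^T)=(\mu_k B)^T$ (a direct check on \eqref{eq:matrix-mutation}), so the cluster pattern with initial exchange matrix $B^T_t$ at $t$ has exchange matrix $(B_s)^T$ at every vertex $s$, and in particular $B^T$ at $t_0$. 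Second, transposition swaps the two truncation conventions, $\big([X]^{k\bullet}_+\big)^T=[X^T]^{\bullet k}_+$, and the column and row sign conventions, $\varepsilon_{\bullet k}(M^T)=\varepsilon_{k\bullet}(M)$. Combining \eqref{eq:CG-opponent} with Theorem~\ref{thm:signs-ci} shows that $G^{B;t_0}_t=\big(C^{B^T_t;t}_{t_0}\big)^T$ is \emph{row} sign-coherent, so the row sign $\varepsilon_{k\bullet}\big(G^{B;t_0}_t\big)$ in part~(3) is well defined.

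For part~(3) I would read \eqref{eq:CG-opponent} in the two forms $C^{B;t_0}_t=\big(G^{B^T_t;t}_{t_0}\big)^T$ and $C^{B_1;t_1}_t=\big(G^{B^T_t;t}_{t_1}\big)^T$. The crucial point is that the two $G$-matrices on the right belong to a \emph{single} pattern, namely the one with initial data $\big(B^T_t,t\big)$, and are attached to the adjacent vertices $t_0,t_1$ joined by the edge labelled $k$; hence they are related by the final-seed mutation \eqref{eq:g-frontmutationsign}. Substituting $B^{B^T_t;t}_{t_0}=B^T$ and $\varepsilon_{\bullet k}\big(C^{B^T_t;t}_{t_0}\big)=\varepsilon_{\bullet k}\big(\big(G^{B;t_0}_t\big)^T\big)=\varepsilon_{k\bullet}\big(G^{B;t_0}_t\big)$, and then transposing, turns \eqref{eq:g-frontmutationsign} into exactly \eqref{eq:crwc}. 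The companion formula \eqref{eq:grwc} is obtained symmetrically: write $\big(G^{B;t_0}_t\big)^T=C^{B^T_t;t}_{t_0}$ and $\big(G^{B_1;t_1}_t\big)^T=C^{B^T_t;t}_{t_1}$, relate them by the final-seed mutation \eqref{eq:c-frontmutationsign} of the transposed pattern, and transpose back. Note that this route reaches part~(3) directly, without ever invoking the $H$-matrices.

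Part~(2) then follows from part~(3) by un-reducing the tropical sign, using \eqref{GB=BC}. Taking $\varepsilon=\varepsilon_{k\bullet}\big(G^{B;t_0}_t\big)$ in \eqref{eq:crs} makes the $k$th row of $-\varepsilon G^{B;t_0}_t$ nonpositive, so $\big[{-}\varepsilon G^{B;t_0}_t\big]^{k\bullet}_+=O_n$ and \eqref{eq:crs} collapses to \eqref{eq:crwc}. For the opposite sign, the $k$th row of $-\varepsilon G^{B;t_0}_t$ is nonnegative, so $\big[{-}\varepsilon G^{B;t_0}_t\big]^{k\bullet}_+=-\varepsilon\big(G^{B;t_0}_t\big)^{k\bullet}$; applying \eqref{eq:b--b} to combine the two positive-part coefficients of $C^{B;t_0}_t$, the equality of \eqref{eq:crs} with \eqref{eq:crwc} reduces precisely to the $k$th row $\big(G^{B;t_0}_t\big)^{k\bullet}B_t=B^{k\bullet}C^{B;t_0}_t$ of the identity \eqref{GB=BC}. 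The $G$-matrix formula \eqref{eq:grs} is handled identically, the only change being that the reconciliation now uses the elementary column/row identity $B^{\bullet k}G^{B;t_0}_t=B\big(G^{B;t_0}_t\big)^{k\bullet}$ in place of \eqref{GB=BC}.

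The main obstacle is concentrated entirely in part~(1): once the duality \eqref{eq:CG-opponent} is granted, parts~(2) and~(3) are bookkeeping with transposition, sign-coherence, and the two-term decomposition \eqref{eq:b--b}. The only points demanding care are verifying that the transposed pattern really does carry $B^T$ as its exchange matrix at $t_0$ (so that \eqref{eq:g-frontmutationsign} and \eqref{eq:c-frontmutationsign} apply with the correct matrix), and tracking the interchange of $\bullet k$ with $k\bullet$ and of column sign with row sign under transposition. As an alternative to the transposition argument for part~(2), one could instead substitute the identification of the $k$th row of the $H$-matrix with the corresponding truncated row of $G^{B;t_0}_t$ into Proposition~\ref{pr:crear}, which is the route suggested by \cite[Conjecture~6.10]{fziv}.
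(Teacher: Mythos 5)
Your proposal is correct, and it rests on the same two pillars as the paper's own proof: the Nakanishi--Zelevinsky duality \eqref{eq:CG-opponent} (cited from \cite{nz} rather than reproved, in both cases) and the reinterpretation of an initial-seed mutation as a final-seed mutation along the edge joining $t_0$ and $t_1$ in the transposed pattern with initial data $\big(B_t^T,t\big)$. The difference is the order of derivation through the four formulas. The paper imports \eqref{eq:grs} verbatim from \cite[equation~(4.1)]{nz} and obtains \eqref{eq:crwc} from \cite[equations~(1.16) and~(2.7)]{nz} combined with the duality; it then gets \eqref{eq:crs} by transposing the \emph{unreduced} recursion \eqref{eq:g-frontmutation} and gets \eqref{eq:grwc} by specialising $\varepsilon=\varepsilon_{k\bullet}\big(G^{B;t_0}_t\big)$ in \eqref{eq:grs} --- so part~(2) comes first and part~(3) is a specialisation. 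You go the other way: you prove the reduced formulas \eqref{eq:crwc} and \eqref{eq:grwc} first by transposing \eqref{eq:g-frontmutationsign} and \eqref{eq:c-frontmutationsign}, and then recover \eqref{eq:crs} and \eqref{eq:grs} by checking both values of $\varepsilon$ against the reduced forms. I checked the un-reduction step: for $\varepsilon=\varepsilon_{k\bullet}\big(G^{B;t_0}_t\big)$ the truncated term vanishes by row sign-coherence, and for the opposite sign the truncation becomes the full $k$th row, after which \eqref{GB=BC} converts $\big(G^{B;t_0}_t\big)^{k\bullet}B_t$ into $B^{k\bullet}C^{B;t_0}_t$ (respectively the elementary identity $B^{\bullet k}G^{B;t_0}_t=B\big(G^{B;t_0}_t\big)^{k\bullet}$ applies) and \eqref{eq:b--b} closes the gap; this is all correct. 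What your route buys is self-containedness --- only \cite[equation~(1.13)]{nz} is needed as external input --- at the cost of two extra reconciliation computations; the paper's route is shorter on the page because it leans on more of \cite{nz}.
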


\begin{proof} Equalities \eqref{eq:CG-opponent} and \eqref{eq:grs} are the results in \cite[equations~(1.13) and (4.1)]{nz}. Also, \eqref{eq:crwc} is obtained by combining~\eqref{eq:CG-opponent} with~\cite[equations~(1.16) and (2.7)]{nz}. Using~\eqref{eq:CG-opponent}, we have~\eqref{eq:crs} by~\eqref{eq:g-frontmutation}. We note that the $G$-matrices have the row sign-coherence by~\eqref{eq:CG-opponent}. By substituting $\varepsilon =\varepsilon_{k\bullet}\big(G^{B; t_0}_t\big)$ for~\eqref{eq:grs}, we obtain~\eqref{eq:grwc}.
\end{proof}

Through the duality \eqref{eq:CG-opponent}, we can find out the dual equalities between the unreduced form of the final-seed and initial-seed mutations, \eqref{eq:g-frontmutation} and \eqref{eq:crs}, \eqref{eq:c-frontmutation} and \eqref{eq:grs}, respectively. Similarly, the reduced form of the final-seed and initial-seed mutations \eqref{eq:c-frontmutationsign} and \eqref{eq:grwc}, \eqref{eq:g-frontmutationsign} and \eqref{eq:crwc} are dual equalities, respectively.

Using Proposition \ref{pr:cgrs}, we prove the conjecture \cite[Conjecture~6.10]{fziv}, which is the relation between the $H$-matrices and the $G$-matrices as follows:
\begin{Theorem}[{\cite[Conjecture 6.10]{fziv}}]\label{thm:H=G}
For any $t\in\TT_n$, we have the following relation:
\begin{align} \label{eq:H=G}
H^{B; t_0}_t=-\big[{-}G^{B; t_0}_t\big]_+.
\end{align}
\end{Theorem}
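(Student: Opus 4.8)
The plan is to prove \eqref{eq:H=G} one row at a time by comparing the two \emph{independent} descriptions of the initial-seed mutation of the $G$-matrix that are already at hand. Fix an edge of $\TT_n$ joining $t_0$ to an adjacent vertex $t_1$ in direction $k$, and write $\mu_k(B)=B_1$. Specialising \eqref{eq:grear} of Proposition~\ref{pr:crear} to $\varepsilon=1$, so that $H_t(+)=H^{B;t_0}_t$, expresses $G^{B_1;t_1}_t$ through $H^{B;t_0}_t$; specialising \eqref{eq:grs} of Proposition~\ref{pr:cgrs} to $\varepsilon=1$ expresses the \emph{same} matrix through $G^{B;t_0}_t$ alone. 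The first is the $H$-matrix formula of~\cite{fziv}, whereas the second rests only on the $C$--$G$ duality~\eqref{eq:CG-opponent} of~\cite{nz} and on sign-coherence, so the two are derived by different means and no circularity arises. Both share the summand $\big(J_k+[B]^{\bullet k}_+\big)G^{B;t_0}_t$, so subtracting them and using the linearity of $A\mapsto A^{k\bullet}$ gives
\[
B\,M^{k\bullet}=O_n,\qquad\text{where}\quad M:=H^{B;t_0}_t+\big[{-}G^{B;t_0}_t\big]_+ .
\]
Since $-\big[{-}G^{B;t_0}_t\big]_+$ is exactly the right-hand side of~\eqref{eq:H=G}, it remains to deduce $M=O_n$.

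The step that removes the factor $B$ is the elementary observation that, because $M^{k\bullet}$ has only its $k$th row nonzero, one has $\big(B\,M^{k\bullet}\big)_{ij}=b_{ik}\,M_{kj}$. Hence $b_{ik}M_{kj}=0$ for all $i,j$, so the entire $k$th row of $M$ vanishes as soon as the $k$th column of $B$ contains a single nonzero entry; that is, $h^{B;t_0}_{kj;t}=-\big[{-}g^{B;t_0}_{kj;t}\big]_+$ for every $j$. Letting $k$ range over all mutation directions at $t_0$ and recalling that $B$, $t_0$, $t$ are arbitrary, this settles every row of~\eqref{eq:H=G} indexed by a nonzero column of $B$. (The $C$-matrix comparison \eqref{eq:crear} versus \eqref{eq:crs} only yields $M^{k\bullet}B_t=O_n$, i.e.\ cancellation against the possibly singular $B_t$, which is why I prefer the $G$-version, where $B M^{k\bullet}$ isolates a single column of $B$ against the single nonzero row of $M^{k\bullet}$.)

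The only case left is an index $k$ whose $k$th column of $B$ is identically zero; by skew-symmetrizability its $k$th row is then zero as well, so $k$ is an isolated index and the comparison above is vacuous. Here I would argue directly: the sub-pattern on $\{k\}$ has rank one, its polynomials $F^{B;t_0}_{j;t}$ involve $y_k$ only for $j=k$ and are then equal to $1$ or $1+y_k$, so the definition of $H^{B;t_0}_t$ (equivalently Lemma~\ref{lem:horigin}) together with~\eqref{eq:tropicalmaximaldegree} gives $h^{B;t_0}_{kj;t}=-f^{B;t_0}_{kj;t}$, and a direct check of the rank-one $g$-vectors shows $-f^{B;t_0}_{kj;t}=-\big[{-}g^{B;t_0}_{kj;t}\big]_+$ entry by entry. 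I expect this isolated-index bookkeeping to be the only genuinely separate piece, and it is entirely routine. The main obstacle is precisely the cancellation of the possibly singular $B$, and it is resolved by the special shape $A^{k\bullet}$ of the matrices entering the comparison, which lets one nonzero entry of a column of $B$ annihilate a whole row of $M$.
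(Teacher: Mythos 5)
Your proposal is correct and follows essentially the same route as the paper: the paper likewise subtracts \eqref{eq:grs} from \eqref{eq:grear} at $\varepsilon=1$ to get $B\big[{-}G^{B;t_0}_t\big]^{k\bullet}_+=-B\big(H^{B;t_0}_t\big)^{k\bullet}$, cancels $B$ row by row using a nonzero entry $b_{ik}$ of the $k$th column, and handles the zero columns of $B$ separately via the block (direct-sum) decomposition of the pattern and a direct rank-one check. Your isolated-index bookkeeping is just a lighter version of the paper's explicit block computation, so no essential difference.
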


\begin{proof}We assume the sign-coherence of the $C$-matrices. Then, comparing \eqref{eq:grear} with \eqref{eq:grs} and setting $\varepsilon=1$, we get
\begin{gather}\label{eq:BG=BH}
	B\big[{-}G^{B; t_0}_t\big]^{k\bullet}_+=-B\big(H^{B; t_0}_t\big)^{k\bullet}.
\end{gather}
Since $k$ is arbitrary, we have
 \begin{gather}\label{eq:allBG=BH}
	B\big[{-}G^{B; t_0}_t\big]_+=-BH^{B; t_0}_t.
\end{gather}
If $B$ have no zero column vector, then choosing $i$ which satisfies $b_{ik}\neq 0$, we have $[-g_{kj;t}]_+=-h_{kj;t}$ and thus we have \eqref{eq:H=G} as desired. We prove the case that $B$ have $m(\neq0)$ zero column vectors. Permuting labels of $n$-regular tree $\TT_n$, we can assume
\begin{gather*}
B=\begin{bmatrix}
	B' & O \\
	O & O
\end{bmatrix},
\end{gather*}
where $B'$ is $(n-m)\times (n-m)$ matrix without zero column vector. Under this assumption, for \begin{xy}(0,1)*+{t_0}="A",(13.5,1)*+{\cdots}="B",(26.5,1)*+{t}="C",\ar@{-}^{i_1} "A";"B",\ar@{-}^{\ \ i_{s}} "B";"C" \end{xy} in $\TT_n$, we have
\begin{gather*}
\mu_{i_{s}}\cdots\mu_{i_2}\mu_{i_1}\big(G^{B;t_0}_{t_0}\big)|_{n-m} =\begin{bmatrix}\mu_{i_{\ell'}}\cdots\mu_{i'_{2}}\mu_{i'_{1}}\big(G_{t_0}^{B';t_0}\big) \\ O
\end{bmatrix},\\
\mu_{i_{s}}\cdots\mu_{i_2}\mu_{i_1}\big(H^{B;t_0}_{t_0}\big)|_{n-m} =\begin{bmatrix}\mu_{i'_{\ell}}\cdots\mu_{i'_{2}}\mu_{i'_{1}}\big(H_{t_0}^{B';t_0}\big) \\ O
\end{bmatrix},
\end{gather*}
where $(i'_{1},\dots,i'_{\ell})$ is a sequence which is obtained by removing $n-m+1,\dots,n $ from $(i_{1},\dots,i_{s})$, and $|_{n-m}$ means taking the left $n\times(n-m)$ submatrix. Also about the other diagonal entries of $B$, we have the similar equalities. Thus we have
\begin{gather*}
G_t^{B;t_0} =G_{t'}^{B';t_0}\oplus G_{t_1}^{(0);t_0}\oplus\cdots\oplus G_{t_m}^{(0);t_0},\\
H_t^{B;t_0} =H_{t'}^{B';t_0}\oplus H_{t_1}^{(0);t_0}\oplus\cdots\oplus H_{t_m}^{(0);t_0},
\end{gather*}
where $t'$ is a vertex of $\TT_{m-n}$ which satisfies $\Sigma_{t'}=\mu_{i'_{\ell}}\cdots\mu_{i'_{1}}(\Sigma_{t_0})$ and
\begin{gather*}
t_j=\begin{cases}
t_0 &\text{if the number of $n-m+j$ in $(i_{1},\dots,i_{s})$ is even},\\
t'_0 &\text{if the number of $n-m+j$ in $(i_{1},\dots,i_{s})$ is odd},
\end{cases}
\end{gather*}
in $\TT_1$: \begin{xy}(0,1)*+{t_0}="A",(12,1)*+{t'_0}="B",\ar@{-}^{j} "A";"B" \end{xy} for any $j\in\{1,\dots,m\}$.
Explicitly, we have
\begin{gather*}
G^{B;t_0}_{t} =
\begin{bmatrix}
	G^{B';t_0}_{t'} & & &O \\
	 & (-1)^{N_1} & & \\
	 & & \ddots & \\
	 O& & & (-1)^{N_m}
\end{bmatrix},\\
H^{B;t_0}_{t} =
\begin{bmatrix}
	H^{B';t_0}_{t'} & & &O \\
	 & -[(-1)^{N_1+1}]_+ & & \\
	 & & \ddots & \\
	 O& & & -[(-1)^{N_m+1}]_+
\end{bmatrix},
\end{gather*}
where $N_j$ is the number of $n-m+j$ in $(i_{1},\dots,i_{s})$. Since $B'$ has no zero column vectors and
\begin{gather*}
\begin{bmatrix}B'\big[{-}G_{t'}^{B';t_0}\big]_+&O\\O&O\end{bmatrix}=\begin{bmatrix}-B'H_{t'}^{B';t_0}&O\\O&O\end{bmatrix}
\end{gather*}
holds by \eqref{eq:allBG=BH}, we have $\big[{-}G_{t'}^{B';t_0}\big]_+=-H_{t'}^{B';t_0}$. By direct calculation, we also have $\big[{-}G_{t_{j}}^{(0);t_0}\big]_+\allowbreak =-H_{t_j}^{(0);t_0}$ for all $j$. Therefore, we have \eqref{eq:H=G} as desired.
\end{proof}

We point out the following equivalence.
\begin{Proposition} \label{th:cgsign}The following statements are equivalent:
\begin{enumerate}\itemsep=0pt
\item[$(i)$] The $C$-matrices have the column sign-coherence.
\item[$(ii)$] The $G$-matrices have the row sign-coherence, and equality \eqref{eq:H=G} holds.
\end{enumerate}
\end{Proposition}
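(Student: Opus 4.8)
The plan is to prove the two implications separately. The implication $(i)\Rightarrow(ii)$ is essentially a repackaging of results already established under the hypothesis of sign-coherence, whereas $(ii)\Rightarrow(i)$ is the substantive direction, and I would reduce it to the characterisation of sign-coherence in terms of $F$-polynomials given in Proposition~\ref{eqfc}, which is stated there as a genuine equivalence and so may be used without presupposing $(i)$.

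For $(i)\Rightarrow(ii)$, assume the $C$-matrices are column sign-coherent. Then Theorem~\ref{thm:H=G}, whose proof uses only this hypothesis, immediately gives \eqref{eq:H=G}. For the row sign-coherence of the $G$-matrices I would invoke the opposite-side duality \eqref{eq:CG-opponent} of Proposition~\ref{pr:cgrs}(1), valid once the $C$-matrices are sign-coherent: since $\big(G^{B;t_0}_t\big)^T=C^{B^T_t;t}_{t_0}$ is itself a $C$-matrix, and every $C$-matrix is column sign-coherent by assumption, its transpose $G^{B;t_0}_t$ is row sign-coherent. This settles $(i)\Rightarrow(ii)$ with no new computation.

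For $(ii)\Rightarrow(i)$, the strategy is to verify condition $(ii)$ of Proposition~\ref{eqfc}, namely that every $F^{B;t_0}_{\ell;t}(\yy)$ has constant term~$1$, which is equivalent to $(i)$. First I would unwind the definition of the $H$-matrix: because evaluation in $\text{Trop}(u)$ takes minima of exponents, $h^{B;t_0}_{ij;t}$ equals the minimum of the linear form $-v_i+\sum_{\ell\neq i}[-b_{i\ell}]_+v_\ell$ over the exponent vectors $v$ occurring in $F^{B;t_0}_{j;t}(\yy)$. The assumed identity \eqref{eq:H=G} then reads $h^{B;t_0}_{ij;t}=\min\big(g^{B;t_0}_{ij;t},0\big)$, and the assumed row sign-coherence of $G^{B;t_0}_t$ lets me split this by the sign of each row: a nonnegative row forces $h^{B;t_0}_{ij;t}=0$, while a nonpositive row forces $h^{B;t_0}_{ij;t}=g^{B;t_0}_{ij;t}$. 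Feeding these prescribed minima back into the minimisation over the Newton polytope of $F^{B;t_0}_{j;t}(\yy)$ constrains that polytope; combining this with the symmetry $F^{-B;t_0}_t=F^{B;t_0}_t$ of \eqref{eq:F=F}, which through \eqref{Fpoly=Fpoly} identifies the constant term of the $F$-polynomial for $-B$ with the top coefficient of the one for $B$, I would conclude that each $F$-polynomial has a single monomial realising all of its maximal degrees, hence a unique maximal monomial and constant term~$1$. Proposition~\ref{eqfc} then yields $(i)$.

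The hard part will be this last reduction: converting the purely exponent-level data carried by \eqref{eq:H=G} and the row signs of $G$ into the coefficient statement ``constant term~$1$''. Since the $H$- and $G$-matrices record only tropical (minimal- and maximal-degree) information, the delicate step is to exclude a second maximal monomial and any cancellation, that is, to show that the minima prescribed by \eqref{eq:H=G} can be attained simultaneously only by the degenerate Newton polytope of condition $(iii)$ of Proposition~\ref{eqfc}. I expect to control this by applying \eqref{eq:H=G} to both $B$ and $-B$ and using \eqref{eq:F=F}, so that the lower and upper tropical faces of the Newton polytope are pinned from both sides and must collapse onto a single dominating vertex.
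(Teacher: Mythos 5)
Your direction $(i)\Rightarrow(ii)$ coincides with the paper's: \eqref{eq:H=G} comes from Theorem~\ref{thm:H=G}, and the row sign-coherence of the $G$-matrices comes from transposing \eqref{eq:CG-opponent}. The problem is the converse direction, where your route through Proposition~\ref{eqfc} has a genuine gap. Conditions $(ii)$ and $(iii)$ of Proposition~\ref{eqfc} are \emph{coefficient} statements: the constant term equals~$1$, respectively the unique maximal monomial has coefficient~$1$. All of the data you propose to feed into the argument --- the entries $h^{B;t_0}_{ij;t}$ (minima of linear forms over the support of $F^{B;t_0}_{j;t}$), the row signs of $G^{B;t_0}_t$, the componentwise maxima recorded in $F^{B;t_0}_t$, and the symmetry \eqref{eq:F=F} --- is tropical, i.e., it sees only exponent vectors and forgets coefficients entirely. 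No amount of pinning the Newton polytope can rule out, say, a constant term equal to~$2$, so the final step ``hence \dots constant term~$1$'' cannot follow from these inputs. Even the weaker geometric claim is unsupported: knowing $n$ support-function values of the Newton polytope together with its componentwise maxima and minima does not collapse it onto a ``single dominating vertex'' (a support like that of $1+y_1+y_2$ has the same componentwise extrema as one containing $y_1y_2$), and your identification of $h^{B;t_0}_{ij;t}$ with a minimum over occurring monomials already presupposes positivity of the $F$-polynomial coefficients, which is not available in this paper.

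The paper's proof of $(ii)\Rightarrow(i)$ avoids $F$-polynomials altogether and is much shorter: substituting the assumed identity $H_t(\varepsilon)^{k\bullet}=-\big[{-}\varepsilon G^{B;t_0}_t\big]^{k\bullet}_+$ into the \emph{unconditional} initial-seed mutation formulas \eqref{eq:crear} and \eqref{eq:grear} yields \eqref{eq:crs} and \eqref{eq:grs} without assuming sign-coherence; one then proves the duality $\big(G^{B;t_0}_t\big)^T=C^{B_t^T;t}_{t_0}$ by induction on the distance from $t_0$ to $t$, comparing \eqref{eq:crs} with the final-seed recursion \eqref{eq:g-frontmutation}. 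Since every $C$-matrix is thereby the transpose of a $G$-matrix, the assumed row sign-coherence of the $G$-matrices delivers the column sign-coherence of the $C$-matrices directly. If you want to keep your own architecture, you would have to replace the Newton-polytope step by an argument that actually tracks coefficients (for example through the recursion \eqref{eq:f-frontmutation}), at which point you are essentially reproving Proposition~\ref{eqfc} rather than using it.
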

\begin{proof}We proved (i) $\Rightarrow$ (ii) by \eqref{eq:CG-opponent} and Theorem~\ref{thm:H=G}. We give the proof of~(ii) $\Rightarrow$~(i).
To prove it, let us show~\eqref{eq:CG-opponent}. We note that we can not use \eqref{eq:CG-opponent} directly because we do not assume the sign-coherence of the $C$-matrices now. By assumption, we have $H_t(\varepsilon)^{k\bullet}=-\big[{-}\varepsilon G^{B; t_0}_t\big]^{k\bullet}_+$. Substituting it for~\eqref{eq:crear} and~\eqref{eq:grear}, we have~\eqref{eq:crs} and~\eqref{eq:grs}.
Let us show \eqref{eq:CG-opponent} by the induction on the distance between $t$ and $t_0$. If $\big(G^{B:t_0}_t\big)^T=C^{B^T_t; t}_{t_0}$ holds for some $t \in \TT_n$, then for $t'\in \TT_n$ such that \begin{xy}(0,1)*+{t}="A",(10,1)*+{t'}="B",\ar@{-}^{\ell}"A";"B" \end{xy}, using~\eqref{eq:crs} and~\eqref{eq:g-frontmutation}, we have
\begin{align*}
C^{B^T_{t'}; t'}_{t_0}&=\big(J_{\ell}+\big[{-}\varepsilon B_t^T\big]^{\ell\bullet}_+\big)C^{B_t^T; t}_{t_0}-\big[{-}\varepsilon G^{B_t^T; t}_{t_0}\big]^{\ell\bullet}_+ B^T \\
&=\big(G^{B; t_0}_t\big(J_{\ell}+[-\varepsilon B_t]^{\bullet \ell}_+\big)-B\big[{-}\varepsilon C^{B; t_0}_t\big]^{\bullet \ell}_+\big)^T =\big(G^{B; t_0}_{t'}\big)^T.
\end{align*}
Thus by the row sign-coherence of the $G$-matrices, we get the sign-coherence of the $C$-matrices immediately.
\end{proof}

By Theorem \ref{thm:signs-ci} and Proposition \ref{th:cgsign}, the (row) sign-coherence the $G$-matrices and Theorem~\ref{thm:H=G} hold. Furthermore, applying the sign-coherence of the $C$-matrices is equivalent to applying the sign-coherence of the $G$-matrices and \eqref{eq:H=G}.
 Using it, let us give a reduced expression of the initial-seed mutations of $F$-matrices.
\begin{Proposition}\label{F-rearsigncoherence}\quad
\begin{enumerate}\itemsep=0pt
\item[$(1)$] Let \begin{xy}(0,1)*+{t_0}="A",(10,1)*+{t_1}="B",\ar@{-}^{k}"A";"B" \end{xy} in $\TT_n$, $\mu_k(B)=B_1$ and $\varepsilon \in \{\pm1\}$.
Then, we have
\begin{gather} \label{eq:frs}
F^{B_1; t_1}_t=\big(J_k+[\varepsilon B]^{k\bullet}_+\big)F^{B; t_0}_t +\big[{-}\varepsilon G^{-B; t_0}_t\big]^{k\bullet}_+ +\big[\varepsilon G^{B; t_0}_t\big]^{k\bullet}_+.
\end{gather}
\item[$(2)$] We have a reduced form of the initial-seed mutations as follows:
\begin{align}
F^{B_1; t_1}_t&=\big(J_k+\big[\varepsilon_k\big(G^{-B; t_0}_t\big)B\big]^{k\bullet}_+\big)F^{B; t_0}_t+\big[\varepsilon_k\big(G^{-B; t_0}_t\big)G^{B; t_0}_t\big]^{k\bullet}_+\notag \\
&=\big(J_k+\big[{-}\varepsilon_k\big(G^{B; t_0}_t\big)B\big]^{k\bullet}_+\big)F^{B; t_0}_t+\big[\varepsilon_k\big(G^{B; t_0}_t\big)G^{-B; t_0}_t\big]^{k\bullet}_+.\label{eq:frsred}
\end{align}
\end{enumerate}
\end{Proposition}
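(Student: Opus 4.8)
The plan is to obtain \eqref{eq:frs} from the unreduced initial-seed mutation \eqref{eq:frear} of Proposition~\ref{pr:frear} by eliminating the $H$-matrices through the now-established identity \eqref{eq:H=G} of Theorem~\ref{thm:H=G}, and then to read off the reduced forms \eqref{eq:frsred} by specialising $\varepsilon$ to suitable row signs of the $G$-matrices. Throughout I may freely use the row operation identity $A^{k\bullet}B=(AB)^{k\bullet}$ and the additivity \eqref{eq:b--b}, as well as the relation \eqref{eq:G=G+BF} between $G^{B;t_0}_t$ and $G^{-B;t_0}_t$.

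First I would prove \eqref{eq:frs} for $\varepsilon=1$. Setting $\varepsilon=1$ in \eqref{eq:frear}, the two occurring $H$-matrices are $H^{B;t_0}_t(+)=H^{B;t_0}_t$ and $H^{-B;t_0}_t(+)=H^{-B;t_0}_t$, so Theorem~\ref{thm:H=G}, applied with initial matrices $B$ and $-B$, gives $H^{B;t_0}_t=-[-G^{B;t_0}_t]_+$ and $H^{-B;t_0}_t=-[-G^{-B;t_0}_t]_+$. After this substitution the right-hand side carries the block $(G^{B;t_0}_t)^{k\bullet}+[-G^{B;t_0}_t]^{k\bullet}_+$, which by \eqref{eq:b--b} (in the form $G^{B;t_0}_t+[-G^{B;t_0}_t]_+=[G^{B;t_0}_t]_+$) collapses to $[G^{B;t_0}_t]^{k\bullet}_+$. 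The result is precisely \eqref{eq:frs} with $\varepsilon=1$.

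The remaining point for part~(1) — and the step I expect to require the most care — is to check that the right-hand side of \eqref{eq:frs} is independent of $\varepsilon$. I would avoid substituting $\varepsilon=-1$ into \eqref{eq:frear} directly, since there $H_t(-)$ refers to $H$-matrices based at $t_1$ rather than $t_0$; instead I would compare the two candidate right-hand sides termwise. Subtracting the $\varepsilon=-1$ expression from the $\varepsilon=1$ expression and applying \eqref{eq:b--b} to each $[\,\cdot\,]_+$-pair, the difference reduces to $B^{k\bullet}F^{B;t_0}_t-(G^{-B;t_0}_t)^{k\bullet}+(G^{B;t_0}_t)^{k\bullet}$. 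Writing $B^{k\bullet}F^{B;t_0}_t=(BF^{B;t_0}_t)^{k\bullet}$ and using $G^{-B;t_0}_t-G^{B;t_0}_t=BF^{B;t_0}_t$ from \eqref{eq:G=G+BF}, this difference vanishes identically, so \eqref{eq:frs} holds for both values of $\varepsilon$.

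For part~(2) I would invoke the row sign-coherence of the $G$-matrices, which is available from \eqref{eq:CG-opponent} together with Proposition~\ref{th:cgsign}. Choosing $\varepsilon=\varepsilon_{k\bullet}(G^{-B;t_0}_t)$ makes the $k$th row of $\varepsilon G^{-B;t_0}_t$ nonnegative, hence $[-\varepsilon G^{-B;t_0}_t]^{k\bullet}_+=O_n$; this kills the middle term of \eqref{eq:frs} and yields the first line of \eqref{eq:frsred}. Symmetrically, choosing $\varepsilon=-\varepsilon_{k\bullet}(G^{B;t_0}_t)$ forces $[\varepsilon G^{B;t_0}_t]^{k\bullet}_+=O_n$, removing the last term of \eqref{eq:frs} and giving the second line of \eqref{eq:frsred}. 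Since \eqref{eq:frs} is valid for every $\varepsilon\in\{\pm1\}$ by part~(1), both specialisations are legitimate and the proof is complete.
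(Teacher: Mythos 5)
Your proof is correct and follows essentially the same route as the paper: substitute $H=-[-G]_+$ from Theorem~\ref{thm:H=G} into \eqref{eq:frear} and then specialise $\varepsilon$ to the row signs of the $G$-matrices to get the reduced forms. The only (harmless) difference is that the paper performs the substitution $H_t(\varepsilon)^{k\bullet}=-[-\varepsilon G]^{k\bullet}_+$ for both values of $\varepsilon$ at once, whereas you handle $\varepsilon=-1$ by verifying the $\varepsilon$-independence of the right-hand side of \eqref{eq:frs} via \eqref{eq:b--b} and \eqref{eq:G=G+BF}; both arguments are valid.
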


\begin{proof}(1) Thanks to Theorem~\ref{thm:H=G}, we can substitute $H^{B; t_0}_t\!(\varepsilon)^{k\bullet}\!=\!{-}\big[{-}\varepsilon G^{B; t_0}_t\big]^{k\bullet}_+\!$ and $H^{-B; t_0}_t\!(\varepsilon)^{k\bullet}\!\allowbreak =-\big[{-}\varepsilon G^{-B; t_0}_t\big]^{k\bullet}_+$ for~\eqref{eq:frear}. Then, we have
\begin{align*}
	F^{B_1; t_1}_t&=\big(J_k+[\varepsilon B]^{k\bullet}_+\big)F^{B; t_0}_t +\big(\varepsilon G^{B; t_0}_t\big)^{k\bullet}+\big[{-}\varepsilon G^{-B; t_0}_t\big]^{k\bullet}_+ +\big[{-}\varepsilon G^{B; t_0}_t\big]^{k\bullet}_+ \\
&=\big(J_k+[\varepsilon B]^{k\bullet}_+\big)F^{B; t_0}_t +\big[{-}\varepsilon G^{-B; t_0}_t\big]^{k\bullet}_+ +\big[\varepsilon G^{B; t_0}_t\big]^{k\bullet}_+,
\end{align*}
as desired.

(2) Substituting $\varepsilon=\varepsilon_k\big(G^{-B; t_0}_t\big)$ or $\varepsilon=-\varepsilon_k\big(G^{B; t_0}_t\big)$ for~\eqref{eq:frs}, we obtain~\eqref{eq:frsred}.
\end{proof}

Like the duality between the final-seed and initial-seed mutations of the $C$- and $G$-matrices, \eqref{eq:ffr} and \eqref{eq:frs}, \eqref{eq:ffront} and \eqref{eq:frsred} are dual equalities, respectively. We show the self-duality of the $F$-matrices, which is analogous to the duality \eqref{eq:CG-opponent} between the $C$- and $G$-matrices. This is the main theorem in this paper.
\begin{Theorem}\label{thm:F-opposite} For any exchange matrix~$B$ and $t_0, t \in \TT_n$, we have
\begin{gather}\label{eq:F-opposite}
\big(F_t^{B;t_0}\big)^T = F_{t_0}^{B_t^T;t}.
\end{gather}
\end{Theorem}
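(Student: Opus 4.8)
The plan is to prove \eqref{eq:F-opposite} by induction on the distance between $t$ and $t_0$ in $\TT_n$, in exactly the spirit of the proof of the $C$--$G$ duality \eqref{eq:CG-opponent} carried out in Proposition~\ref{th:cgsign}. The base case $t=t_0$ is immediate: both $\big(F_{t_0}^{B;t_0}\big)^T$ and $F_{t_0}^{B^T;t_0}$ equal $O_n$ by the initial condition \eqref{eq:fmat-initialcondition}. For the inductive step I would read the left-hand side of \eqref{eq:F-opposite} through the \emph{final}-seed mutation \eqref{eq:ffr} and the right-hand side through the \emph{initial}-seed mutation \eqref{eq:frs}, and verify that after transposing the former the two coincide term by term.

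Before doing so I would record the bookkeeping facts that make the two patterns comparable. From the mutation rule \eqref{eq:matrix-mutation} one checks directly that transposition commutes with matrix mutation, $(\mu_k(B))^T=\mu_k(B^T)$, and from \eqref{x=x,y=y^{-1},B=-B} that negation does as well, $(-B)_t=-B_t$. Since mutations are involutions, these yield $\mu_\ell\big(B_t^T\big)=B_{t'}^T$ along the edge $\ell$ joining $t$ and $t'$, and $\big(B_t^T\big)_{t_0}=B^T$ for the reversed pattern based at $t$. Feeding the last identity into the $C$--$G$ duality \eqref{eq:CG-opponent}, applied to the pattern based at $t$ with initial matrix $B_t^T$, gives the two crucial translations
\begin{gather*}
G^{B_t^T;t}_{t_0}=\big(C^{B;t_0}_t\big)^T,\qquad G^{-B_t^T;t}_{t_0}=\big(C^{-B;t_0}_t\big)^T,
\end{gather*}
where for the second one I also use $-B_t^T=\big((-B)_t\big)^T$.

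With these in hand the inductive step is a direct comparison. Transposing \eqref{eq:ffr} and using that truncation and $[\cdot]_+$ behave as $\big([A]^{\bullet\ell}_+\big)^T=\big[A^T\big]^{\ell\bullet}_+$, I obtain
\begin{gather*}
\big(F^{B;t_0}_{t'}\big)^T=\big(J_\ell+[-\varepsilon B_t^T]^{\ell\bullet}_+\big)\big(F^{B;t_0}_t\big)^T+\big[{-}\varepsilon\big(C^{B;t_0}_t\big)^T\big]^{\ell\bullet}_++\big[\varepsilon\big(C^{-B;t_0}_t\big)^T\big]^{\ell\bullet}_+.
\end{gather*}
On the other side I apply \eqref{eq:frs} to the pattern based at $t$ with initial matrix $B_t^T$, mutating in direction $\ell$ with sign $-\varepsilon$, and substitute the inductive hypothesis $F^{B_t^T;t}_{t_0}=\big(F^{B;t_0}_t\big)^T$ together with the two translations above. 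The resulting expression for $F^{B_{t'}^T;t'}_{t_0}$ matches the display term for term, completing the induction.

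The main obstacle is organisational rather than computational: one must keep scrupulous track of which of the three data (initial matrix, initial vertex, evaluation vertex) is swapped at each step, so that \eqref{eq:CG-opponent}, \eqref{eq:ffr} and \eqref{eq:frs} are applied to the correct pattern and the signs are correctly paired (the final-seed sign $\varepsilon$ with the initial-seed sign $-\varepsilon$). I would stress that this inductive route is preferable to the tempting direct one: transposing \eqref{eq:G=G+BF}, invoking \eqref{eq:CG-opponent}, and using \eqref{eq:c-} for the reversed pattern only yields $\big(F_t^{B;t_0}\big)^T B^T=F_{t_0}^{B_t^T;t}\,B^T$, after which cancelling $B^T$ would force a separate treatment of exchange matrices with nonzero kernel, exactly the block-decomposition nuisance met in the proof of Theorem~\ref{thm:H=G}. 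The induction sidesteps this entirely.
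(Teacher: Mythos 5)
Your proof is correct and follows essentially the same route as the paper: induction on the distance from $t_0$ to $t$, with the inductive step obtained by transposing the final-seed mutation of the $F$-matrices and matching it against the initial-seed mutation for the pattern based at $t$ with initial matrix $B_t^T$, using the $C$--$G$ duality \eqref{eq:CG-opponent} to convert the $C$-matrix terms into $G$-matrix terms. The only cosmetic difference is that you pair the unreduced formulas \eqref{eq:ffr} and \eqref{eq:frs} with signs $\varepsilon$ and $-\varepsilon$, whereas the paper pairs the reduced forms \eqref{eq:ffront} and \eqref{eq:frsred} via the tropical signs; the two computations are equivalent.
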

\begin{proof}We prove \eqref{eq:F-opposite} by the induction on the distance between $t$ and $t_0$ in $\TT_n$. When $t=t_0$, we have $\big(F_t^{B;t_0}\big)^T = O =F_{t_0}^{B_t^T;t}$ as desired. We show that if~\eqref{eq:F-opposite} holds for some $t\in\TT_n$, then it also holds for $t'\in\TT_n$ such that \begin{xy}(0,1)*+{t}="A",(10,1)*+{t'}="B",\ar@{-}^{\ell} "A";"B" \end{xy}. By the inductive assumption, \eqref{eq:ffront}, Proposition~\ref{F-rearsigncoherence} and~\eqref{eq:CG-opponent}, we have
\begin{align*}
\big(F_{t'}^{B;t_0}\big)^T&=\big(J_{\ell}+[\varepsilon_{\ell}\big(G^{-B^T_t; t}_{t_0}\big)B_t^T]^{\ell\bullet}_+\big)\big(F^{B; t_0}_t\big)^T+\big[\varepsilon_{\ell}\big(G^{-B_t^T; t}_{t_0}\big)G^{B_t^T; t}_{t_0}\big]^{\ell\bullet}_+ \\
&=\big(J_{\ell}+[\varepsilon_{\ell}\big(G^{-B_t^T; t}_{t_0}\big)B_t^T]^{\ell\bullet}_+\big)F^{B^T_t; t}_{t_0}+\big[\varepsilon_{\ell}\big(G^{-B_t^T; t}_{t_0}\big)G^{B_t^T; t}_{t_0}\big]^{\ell\bullet}_+ =F_{t_0}^{B_{t'}^T;t'}
\end{align*}
as desired.
\end{proof}

\subsection{Examples}\label{section3.4}
We introduce an example for the final-seed and initial-seed mutations in the case of $A_2$. Let $n=2$, and consider a tree $\TT_2$ whose edges are labeled as follows:
\[
\begin{xy}
(-10,1)*+{\dots}="a",(0,1)*+{t_0}="A",(10,1)*+{t_1}="B",(20,1)*+{t_2}="C", (30,1)*+{t_3}="D",(40,1)*+{t_4}="E",(50,1)*+{t_5}="F", (60,1)*+{\dots}="f"
\ar@{-}^{1}"a";"A"
\ar@{-}^{2}"A";"B"
\ar@{-}^{1}"B";"C"
\ar@{-}^{2}"C";"D"
\ar@{-}^{1}"D";"E"
\ar@{-}^{2}"E";"F"
\ar@{-}^{1}"F";"f"
\end{xy}.
\]
We set $B=\left[\begin{smallmatrix}
	0 & 1 \\
	-1 & 0
\end{smallmatrix}\right]$ as the initial exchange matrix at $t_0$. Then, the coefficients, the cluster variables, the $C$-, $G$- and $F$-matrices are given by Tables~\ref{table1} and~\ref{table2} \cite[Example~2.10]{fziv}.
\begin{table}[ht]
\centering
\begin{tabular}{|c|cc|cc|}
\hline
$t$& \multicolumn{2}{|c}{$\Ycal^{B; t_0}_t$} & \multicolumn{2}{|c|}{$\Xcal^{B; t_0}_t$}\tsep{3pt}\bsep{2pt}\\
\hline
&&&&\\[-4mm]
$0$ &$y_1$ & $y_2$& $ x_1$& $x_2$ \\[1mm]
\hline
&&&&\\[-4mm]
$1$& $y_1(y_2\oplus 1)$& $\dfrac{1}{y_2}$ &$ x_1$& $\dfrac{x_1y_2+1}{(y_2\oplus 1)x_2}$ \\[3mm]
\hline
&&&&\\[-4mm]
$2$& $\dfrac{1}{y_1(y_2\oplus 1)}$ & $\dfrac{y_1y_2\oplus y_1\oplus 1}{y_2}$ & $\dfrac{x_1y_1y_2 + y_1+ x_2}{(y_1y_2\oplus y_1\oplus 1)x_1x_2}$ & $\dfrac{x_1y_2+1}{(y_2\oplus 1)x_2}$ \\[3mm]
\hline
&&&&\\[-4mm]
$3$& $\dfrac{y_1\oplus1}{y_1y_2}$ & $\dfrac{y_2}{y_1y_2\oplus y_1\oplus 1} $& $\dfrac{x_1y_1y_2+y_1+x_2}{(y_1y_2\oplus y_1\oplus 1)x_1x_2}$ & $\dfrac{y_1+x_2}{x_1(y_1\oplus 1)}$ \\[3mm]
\hline
&&&&\\[-4mm]
$4$& $\dfrac{y_1y_2}{y_1\oplus 1}$&$\dfrac{1}{y_1}$ &$ x_2 $& $\dfrac{y_1+x_2}{x_1(y_1\oplus 1)}$ \\[3mm]
\hline
&&&&\\[-4mm]
$5$&$ y_2$ &$ y_1$ & $x_2$ & $x_1$\\[1mm]
\hline
\end{tabular}
\caption{Coefficients and cluster variables in type~$A_2$.}\label{table1}
\end{table}

\begin{table}[h]
\centering
\begin{tabular}{|c|c|c|c|}
\hline
&&&\\[-4mm]
$t$& $C^{B; t_0}_t$ & $G^{B; t_0}_t$ \hspace{0mm} & $F^{B;t_0}_t$ \bsep{2pt}\\
\hline
&&&\\[-3mm]
$0$& $\begin{bmatrix}
	1 & 0 \\
	0 & 1
\end{bmatrix} $
& $\begin{bmatrix}
	1 & 0 \\
	0 & 1
\end{bmatrix}$
&$\begin{bmatrix}
	0 & 0 \\
	0 & 0
\end{bmatrix}$
 \\[4mm]
\hline
&&&\\[-3mm]
$1$& $\begin{bmatrix}
	1 & 0 \\
	0 & -1
\end{bmatrix} $
&$\begin{bmatrix}
	1 & 0 \\
	0 & -1
\end{bmatrix}$
&$\begin{bmatrix}
	0 & 0 \\
	0 & 1
\end{bmatrix}$\\[4mm]
\hline
&&&\\[-3mm]
$2$& $\begin{bmatrix}
	-1 & 0 \\
	0 & -1
\end{bmatrix} $
&$\begin{bmatrix}
	-1 & 0 \\
	 0 & -1
\end{bmatrix}$
&$\begin{bmatrix}
	1 & 0 \\
	1 & 1
\end{bmatrix}$\\[4mm]
\hline
&&&\\[-3mm]
3& $\begin{bmatrix}
	-1 & 0 \\
	 -1& 1
\end{bmatrix}$
&$\begin{bmatrix}
	-1 & -1 \\
	0 & 1
\end{bmatrix}$
&$\begin{bmatrix}
	1 & 1 \\
	 1& 0
\end{bmatrix}$\\[4mm]
\hline
&&&\\[-3mm]
$4$& $\begin{bmatrix}
	1 & -1 \\
	1 & 0
\end{bmatrix}$
&$\begin{bmatrix}
	0 & -1 \\
	1 & 1
\end{bmatrix}$
&$\begin{bmatrix}
	0 & 1 \\
	0 & 0
\end{bmatrix}$ \\[4mm]
\hline
&&&\\[-3mm]
$5$& $\begin{bmatrix}
	0 & 1 \\
	1 & 0
\end{bmatrix}$
&$\begin{bmatrix}
	0 & 1 \\
	1 & 0
\end{bmatrix}$
&$\begin{bmatrix}
	0 & 0 \\
	0 & 0
\end{bmatrix}$\\[4mm]
\hline
\end{tabular}
\caption{$C$-, $G$- and $F$-matrices in type~$A_2$.}\label{table2}
\end{table}

We show the expressions of the coefficients and the cluster variables at $t_0$ in Type $A_2$ in Table~\ref{table3}, and its counterpart the $C$-, $G$- and $F$-matrices in Table~\ref{table4}.
\begin{table}[h]
\centering
\begin{tabular}{|c|cc|cc|}
\hline
$t$& \multicolumn{2}{|c}{$\Ycal^{B^T_t; t}_{t_0}$}& \multicolumn{2}{|c|}{$\Xcal^{B^T_t; t}_{t_0}$}\tsep{5pt}\bsep{2pt}\\
\hline
&&&&\\[-4mm]
$0$ &$y_1$ & $y_2$& $x_1$& $x_2$ \\[1mm]
\hline
&&&&\\[-4mm]
$1$& $y_1(y_2\oplus 1)$ & $\dfrac{1}{y_2}$ & $x_1$& $\dfrac{y_2x_1+1}{(y_2\oplus 1)x_2}$ \\[3mm]
\hline
&&&&\\[-4mm]
$2$& $\dfrac{y_1y_2\oplus y_2\oplus 1}{y_1}$ & $\dfrac{1}{y_2( y_1\oplus 1)}$ & $\dfrac{y_1x_2 + 1}{(y_1\oplus 1)x_1}$ & $\dfrac{y_1y_2x_2+y_2+x_1}{(y_1y_2\oplus y_2\oplus 1)x_1x_2}$ \\[3mm]
\hline
&&&&\\[-4mm]
$3$& $\dfrac{y_1\oplus 1}{y_1y_2}$ & $\dfrac{y_2}{y_1y_2\oplus y_1\oplus 1}$ & $\dfrac{y_1y_2x_1+y_1+x_2}{(y_1y_2\oplus y_1\oplus 1)x_1x_2}$ & $\dfrac{y_1+x_2}{(y_1\oplus 1)x_1}$ \\[3mm]
\hline
&&&&\\[-4mm]
$4$& $\dfrac{1}{y_2}$&$\dfrac{y_1y_2}{y_2\oplus 1}$ &$ \dfrac{y_2+x_1}{(y_2\oplus 1)x_2} $& $x_1$ \\[3mm]
\hline
&&&&\\[-4mm]
$5$&$ y_2$ &$y_1$ & $x_2$ &$ x_1$\\[1mm]
\hline
\end{tabular}
\caption{Expressions of coefficients and cluster variables at $t_0$ in type~$A_2$.}\label{table3}
\end{table}

\begin{table}[h]
\centering
\begin{tabular}{|c|c|c|c|}
\hline
&&&\\[-4mm]
$t$& $C^{B^T_t; t}_{t_0}$ &$ G^{B^T_t; t}_{t_0}$ \hspace{0mm} & $F^{B^T_t;t}_{t_0}$ \bsep{2pt}\\
\hline
&&&\\[-3mm]
$0$& $\begin{bmatrix}
	1 & 0 \\
	0 & 1
\end{bmatrix} $
& $\begin{bmatrix}
	1 & 0 \\
	0 & 1
\end{bmatrix}$
&$\begin{bmatrix}
	0 & 0 \\
	0 & 0
\end{bmatrix}$
 \\[4mm]
\hline
&&&\\[-3mm]
$1$& $\begin{bmatrix}
	1 & 0 \\
	0 & -1
\end{bmatrix}$
&$\begin{bmatrix}
	1 & 0 \\
	0 & -1
\end{bmatrix}$
&$\begin{bmatrix}
	0 & 0 \\
	0 & 1
\end{bmatrix}$\\[4mm]
\hline
&&&\\[-3mm]
$2$& $\begin{bmatrix}
	-1 & 0 \\
	0 & -1
\end{bmatrix} $
&$\begin{bmatrix}
	-1 & 0 \\
	 0 & -1
\end{bmatrix}$
&$\begin{bmatrix}
	1 & 1 \\
	0 & 1
\end{bmatrix}$\\[4mm]
\hline
&&&\\[-3mm]
$3$& $\begin{bmatrix}
	-1 & 0 \\
	 -1& 1
\end{bmatrix}$
&$\begin{bmatrix}
	-1 & -1 \\
	0 & 1
\end{bmatrix}$
&$\begin{bmatrix}
	1 & 1 \\
	 1& 0
\end{bmatrix}$\\[4mm]
\hline
&&&\\[-3mm]
$4$& $\begin{bmatrix}
	0 & 1 \\
	-1 & 1
\end{bmatrix}$
&$\begin{bmatrix}
	1 & 1 \\
	-1 & 0
\end{bmatrix}$
&$\begin{bmatrix}
	0 & 0 \\
	1 & 0
\end{bmatrix}$ \\[4mm]
\hline
&&&\\[-3mm]
$5$& $\begin{bmatrix}
	0 & 1 \\
	1 & 0
\end{bmatrix}$
&$\begin{bmatrix}
	0 & 1 \\
	1 & 0
\end{bmatrix}$
&$\begin{bmatrix}
	0 & 0 \\
	0 & 0
\end{bmatrix}$\\[4mm]
\hline
\end{tabular}
\caption{$C$-, $G$- and $F$-matrices in type~$A_2$ (moving the initial vertex).}\label{table4}
\end{table}

Comparing Table~\ref{table2} with Table~\ref{table4}, we can see the duality of the $C$-, $G$- and $F$-matrices in~\eqref{eq:CG-opponent} and~\eqref{eq:F-opposite}.

\section{Properties on principal extension of exchange matrices}\label{section4}

In this section, we present some properties of the principal extension of the exchange matrices. These properties yield alternative derivations of some equalities in the previous sections and are interesting for their own right.
 \subsection{Principal extension}\label{section4.1}
For an exchange matrix $B_t$, $\tilde{B_t}=\left[\begin{smallmatrix} B_t \\ C_t^{B;t_0}\end{smallmatrix}\right]$ $(t\in\TT_n)$ is the extended exchange matrix \cite[Section~5]{fzi}. We fix the initial vertex $t_0\in\TT_{2n}$. Let us regard $\TT_n$ as the subtree of $\TT_{2n}$ induced by vertices which are reachable from~$t_0$ via edges labeled by $1,\dots,n$. Let us consider the following ``full extension of $B$''. We set
\begin{gather}\label{eq:extensionB_0}
\overline{B}:=
\begin{bmatrix}
B & -I_n \\
I_n & O
\end{bmatrix}.
\end{gather}
Note that $\overline{B}$ is regular. Then, we obtain a family of $2n\times 2n$ skew-symmetrizable matrices $\{\overline{B}_t\}_{t\in\TT_{2n}}$ such that $\overline{B}=\overline{B}_{t_0}$ $(t_0\in\TT_n\subset \TT_{2n})$ and they are related by the mutation~\eqref{eq:matrix-mutation}. Then, the left half of $\overline{B}_t$ is $\tilde{B}_t$ for $t\in\TT_n$, and also $\overline{B}_t$ is regular. We call $\{\overline{B}_t\}_{t\in\TT_{2n}}$ the \emph{principal extension} of $\{B_t\}_{t\in\TT_n}$. The following proposition gives the explicit expression of $\overline{B}_t$ $(t\in\TT_n)$:
\begin{Proposition}\label{lem:extension B_t} Let $\{\Sigma_t=(\xx_t, \yy_t, B_t)\}_{t\in\TT_{n}}$ be a cluster pattern with the initial vertex $t_0$. For any mutation series $\mu_{i_m}\mu_{i_{m-1}}\cdots\mu_{i_1}\ (i_1,\dots,i_m\in\{1,\dots,n\})$ of $\TT_{n}$, we set $t_{1},\dots,t_{{m-1}},t_{m},t$ as \begin{xy}(0,1)*+{t_0}="A",(13,1)*+{t_1}="B",(27,1)*+{\cdots}="C",(43,1)*+{t_{m-1}}="D",(62,1)*+{t_m=t}="E",\ar@{-}^{i_1} "A";"B",\ar@{-}^{i_2} "B";"C",\ar@{-}^{ i_{m-1}} "C";"D",\ar@{-}^{ i_{m}} "D";"E" \end{xy} in $ \TT_{n}$. Then, the following equality holds:
\begin{gather}\label{eq:extension B_tsign}
\overline{B}_t=
\begin{bmatrix}
B_t & -D^{-1}\big(C_t^{B;t_0}\big)^TD\\
C_t^{B;t_0} & O
\end{bmatrix},
\end{gather}
where $D$ is a skew-symmetrizer of $B$.
\end{Proposition}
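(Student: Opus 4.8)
The plan is to decompose $\overline{B}_t$ into four $n\times n$ blocks and identify each one separately, using crucially that along $\TT_n\subset\TT_{2n}$ we only ever mutate in directions $1,\dots,n$. Write $\overline{B}_t=\left[\begin{smallmatrix} B_t & P_t \\ C_t^{B;t_0} & Q_t\end{smallmatrix}\right]$; the top-left and bottom-left blocks are already $B_t$ and $C_t^{B;t_0}$ because, as noted just before the statement, the left half of $\overline{B}_t$ is the extended exchange matrix $\tilde{B}_t$. So the task reduces to showing $P_t=-D^{-1}\big(C_t^{B;t_0}\big)^TD$ and $Q_t=O$.

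First I would determine $P_t$ with no induction at all, purely from skew-symmetrizability. Setting $\overline{D}=\left[\begin{smallmatrix} D & O \\ O & D\end{smallmatrix}\right]$, a one-line block computation shows $\overline{D}\,\overline{B}=\left[\begin{smallmatrix} DB & -D \\ D & O\end{smallmatrix}\right]$ is skew-symmetric (its off-diagonal blocks $-D$ and $D$ are negatives of each other's transpose), so $\overline{D}$ is a skew-symmetrizer of $\overline{B}$. Since a skew-symmetrizer is preserved by the matrix mutation \eqref{eq:matrix-mutation}, $\overline{D}\,\overline{B}_t$ is skew-symmetric for every $t$. Reading off the relation between its top-right block $DP_t$ and its bottom-left block $DC_t^{B;t_0}$ then forces $DP_t=-\big(DC_t^{B;t_0}\big)^T=-\big(C_t^{B;t_0}\big)^TD$, i.e.\ $P_t=-D^{-1}\big(C_t^{B;t_0}\big)^TD$, which is exactly the desired top-right block.

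It then remains to prove $Q_t=O$, which I would do by induction on the distance from $t_0$. The base case is immediate from \eqref{eq:extensionB_0}. For the step, mutate at $t$ in a direction $k\in\{1,\dots,n\}$ to reach $t'$; since $k\le n$, every bottom-right index $i=n+a$, $j=n+b$ avoids $k$, so \eqref{eq:matrix-mutation} gives $(Q_{t'})_{ab}=(Q_t)_{ab}+\big[(C_t^{B;t_0})_{ak}\big]_+(P_t)_{kb}+(C_t^{B;t_0})_{ak}\big[-(P_t)_{kb}\big]_+$. Substituting $(Q_t)_{ab}=0$ and the formula for $P_t$ just obtained, and pulling out the positive factor $d_k^{-1}d_b$, this collapses to $(C_t^{B;t_0})_{ak}\big[(C_t^{B;t_0})_{bk}\big]_+-\big[(C_t^{B;t_0})_{ak}\big]_+(C_t^{B;t_0})_{bk}$.

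The hard part is seeing that this last expression vanishes, since it is not a formal identity. This is precisely where the sign-coherence of the $C$-matrices (Theorem~\ref{thm:signs-ci}) is indispensable: the two entries $(C_t^{B;t_0})_{ak}$ and $(C_t^{B;t_0})_{bk}$ both sit in the $k$-th column of $C_t^{B;t_0}$, hence share a common sign. If that sign is nonnegative, both products equal $(C_t^{B;t_0})_{ak}(C_t^{B;t_0})_{bk}$; if it is nonpositive, both $[\,\cdot\,]_+$ factors are $0$; either way the difference is $0$. This gives $Q_{t'}=O$, closing the induction, and assembling the four blocks yields \eqref{eq:extension B_tsign}.
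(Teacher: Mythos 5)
Your proof is correct, and it takes a partly different route from the paper's. The paper proceeds by a single induction on the distance from $t_0$ and first establishes a stronger, unreduced formula in which the bottom-right block of $\overline{B}_t$ is an explicit sum of increments $C_{t_s}^{B;t_0}\big[{-}D^{-1}\big(C_{t_s}^{B;t_0}\big)^TD\big]_+^{i_{s+1}\bullet}-\big[{-}C_{t_s}^{B;t_0}\big]_+^{\bullet i_{s+1}}D^{-1}\big(C_{t_s}^{B;t_0}\big)^TD$ accumulated along the mutation path, valid with no sign-coherence input; only afterwards does it invoke Theorem~\ref{thm:signs-ci} to kill each increment. In particular, the paper obtains the top-right block by an inductive computation that feeds the $C$-matrix recursion into the mutation rule \eqref{eq:matrix-mutation}, whereas you get it for free from the observation that $\left[\begin{smallmatrix} D & O \\ O & D\end{smallmatrix}\right]$ skew-symmetrizes $\overline{B}$ and that skew-symmetrizers persist under mutation — a cleaner, essentially non-inductive argument, though you should cite or briefly justify that persistence fact since the paper never states it. Your induction is then confined to the bottom-right block, with the sign-coherence cancellation $(C_t^{B;t_0})_{ak}\big[(C_t^{B;t_0})_{bk}\big]_+-\big[(C_t^{B;t_0})_{ak}\big]_+(C_t^{B;t_0})_{bk}=0$ absorbed into each step rather than applied at the end; this is the same essential cancellation the paper performs termwise on its sum. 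What the paper's longer route buys is the unreduced intermediate formula itself, whose bottom-right sum reappears verbatim (as the matrix $Y$) in the proof of Theorem~\ref{thm:CGF-correspondence}; what your route buys is brevity and a sharper isolation of exactly where sign-coherence is indispensable.
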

\begin{proof}We start with the proof of the following equality:
\begin{gather}\label{eq:extension B_t}
\overline{B}_t=
\begin{bmatrix}
B_t & -D^{-1}\big(C_t^{B;t_0}\big)^TD\\
C_t^{B;t_0} &\mathop{\sum}\limits_{s=0}^{m-1} \big(C_{t_s}^{B;t_0}\big[{-}D^{-1}\big(C_{t_s}^{B;t_0}\big)^TD\big]_+^{i_{s+1}\bullet}-\big[{-}C_{t_s}^{B;t_0}\big]_+^{\bullet i_{s+1}}D^{-1}\big(C_{t_s}^{B;t_0}\big)^TD\big)
\end{bmatrix}.
\end{gather}
We prove \eqref{eq:extension B_t} by the induction on the distance between $t$ and $t_0$ in $\TT_{n}$. The base case $m=0$ is immediate as $\overline{B_{t_0}}=\overline{B}$. It remains to show that if~\eqref{eq:extension B_t} holds for some $t \in \TT_{n}$, then it also holds
for $t' \in \TT_n$ such that\begin{xy}(0,1)*+{t}="A",(10,1)*+{t'}="B",\ar@{-}^\ell "A";"B" \end{xy}. The left $2n \times n$ submatrix of $\overline{B}_t$ coincides with the extended matrix $\tilde{B}_t$ in \cite[Section 5]{fzi}. Therefore, it suffices to prove the right $2n \times n$ submatrix of $\overline{B}_t$. For any $1\leq i \leq n$ and $n+1\leq j \leq 2n$, we have
\begin{align*}
b_{ij;t'}&=\begin{cases}-b_{ij;t} &\text{if $i=\ell$,} \\
b_{ij;t}+b_{i\ell;t} [ b_{\ell j;t}] _{+}+ [ -b_{i\ell;t}] _{+}b_{\ell j;t} &\text{otherwise}
\end{cases}\\
&=\begin{cases}d_i^{-1}c_{j-n,i;t}d_j &\text{if $i=\ell$,}\\
-d_i^{-1}c_{j-n,i;t}d_j+b_{i\ell;t}\big[{-}d_\ell^{-1}c_{j-n,\ell;t}d_j\big]_{+}- [-b_{i\ell;t}]_+d_\ell^{-1}c_{j-n,\ell;t}d_j &\text{otherwise}
\end{cases}\\
&=\begin{cases}-d_i^{-1}c_{j-n,i;t'}d_j &\text{if $i=\ell$,} \\
-d_i^{-1}(c_{j-n,i;t}+b_{\ell i;t} [-c_{j-n,\ell;t} ]_{+}+ [b_{\ell i;t}]_+c_{j-n,\ell;t})d_j &\text{otherwise}
\end{cases}\\
&=-d_i^{-1}c_{j-n,i;t'}d_j .
\end{align*}
By using the equality
\begin{gather*}
\big(C_t^{B;t_0}\big[{-}D^{-1}\big(C_t^{B;t_0}\big)^TD\big]_+^{\ell\bullet}-\big[{-}C_t^{B;t_0}\big]_+^{\bullet\ell}D^{-1}\big(C_t^{B;t_0}\big)^TD\big)_{i-n,j-n}\\
\qquad {} =c_{i-n,\ell;t}\big[{-}d_{\ell}^{-1}c_{j-n,\ell;t}d_j\big]_+-[-c_{i-n,\ell;t}]_+d_{\ell}^{-1}c_{j-n,\ell;t}d_j
\end{gather*}
and the inductive assumption, for any $n+1\leq i,j \leq 2n$, we have
\begin{align}
b_{ij;t'}& = b_{ij;t}+b_{i\ell} [ b_{\ell j;t} ] _{+}+ [ -b_{i\ell;t} ] _{+}b_{\ell j;t}\nonumber\\
&=\sum_{s=0}^{{m-1}}\big(c_{i-n,{i_{s+1}};t_s}\big[{-}d_{i_{s+1}}^{-1}c_{j-n,{i_{s+1}};t_s}d_j\big]_+-[-c_{i-n,{i_{s+1}};t_s}]_+d_{i_{s+1}}^{-1}c_{j-n,{i_{s+1}};t_s}d_j\big)\nonumber\\
&\quad{} +c_{i-n,\ell;t}\big[{-}d_{\ell}^{-1}c_{j-n,\ell;t}d_j\big]_+-[-c_{i-n,\ell;t}]_+d_{\ell}^{-1}c_{j-n,\ell;t}d_j\nonumber\\
&=\sum_{s=0}^{m}\big(c_{i-n,{i_{s+1}};t_s}\big[{-}d_{i_{s+1}}^{-1}c_{j-n,{i_{s+1}};t_s}d_j\big]_+-[-c_{i-n,{i_{s+1}};t_s}]_+d_{i_{s+1}}^{-1}c_{j-n,{i_{s+1}};t_s}d_j\big) \nonumber\\
&=\sum_{s=0}^{m} \big(C_{t_s}^{B;t_0}\big[{-}D^{-1}\big(C_{t_s}^{B;t_0}\big)^TD\big]_+^{{i_{s+1}}\bullet}-\big[{-}C_{t_s}^{B;t_0}\big]_+^{\bullet{i_{s+1}}}D^{-1}\big(C_{t_s}^{B;t_0}\big)^TD\big)_{i-n,j-n}, \label{eq:b=dcd}
\end{align}
where $t_m=t$ and $\ i_{m+1}=\ell$. Thus we have \eqref{eq:extension B_t}. Moreover, applying the sign-coherence of the $C$-matrices, the third equality of~\eqref{eq:b=dcd} is always zero. Therefore, we have~\eqref{eq:extension B_tsign}.
\end{proof}

Let us study the $C$-, $G$- and $F$-matrices for the initial matrix $\overline{B}$ in \eqref{eq:extensionB_0}, especially for the subtree $\TT_n$ in $\TT_{2n}$.
\begin{Theorem}{\label{thm:CGF-correspondence}}
Let $\{\Sigma_t=(\xx_t, \yy_t, B_t)\}_{t\in\TT_{n}}$ be a cluster pattern with the initial vertex $t_0$. For any mutation series $\mu_{i_m}\mu_{i_{m-1}}\cdots\mu_{i_1}\ (i_1,\dots,i_m\in\{1,\dots,n\})$ of $\TT_{n}$, we set $t_{1},\dots,t_{{m-1}},t_{m},t$ as \begin{xy}(0,1)*+{t_0}="A",(13,1)*+{t_1}="B",(27,1)*+{\cdots}="C",(43,1)*+{t_{m-1}}="D",(62,1)*+{t_m=t}="E",\ar@{-}^{i_1} "A";"B",\ar@{-}^{i_2} "B";"C",\ar@{-}^{ i_{m-1}} "C";"D",\ar@{-}^{ i_{m}} "D";"E" \end{xy} in $ \TT_{n}$. Then, the following equalities hold:
\begin{gather}\label{eq:C-correspondencesign}
C_t^{\overline{B};t_0}=\begin{bmatrix}
C_t^{B;t_0}&O\\
O&I_n
\end{bmatrix},\\
\label{eq:G-correspondence}
G_t^{\overline{B};t_0}=\begin{bmatrix}
G_t^{B;t_0}&O\\
O&I_n
\end{bmatrix},\\
\label{eq:F-correspondence}
F_{\ell;t}^{\overline{B};t_0}(\yy)=\begin{cases}
F_{\ell;t}^{B;t_0}(\yy) &\text{if }\ell\in\{1,\dots, n\}, \\
1 &\text{if }\ell\in\{n+1,\dots, 2n\}.
\end{cases}
\end{gather}
\end{Theorem}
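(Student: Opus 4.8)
The plan is to establish the three identities simultaneously by induction on the distance $m$ between $t_0$ and $t$ in $\TT_n$, substituting the explicit shape of $\overline{B}_t$ from~\eqref{eq:extension B_tsign} into the front-mutation recurrences~\eqref{eq:c-frontmutation},~\eqref{eq:g-frontmutation} and~\eqref{eq:f-frontmutation}. Throughout I abbreviate $C_t:=C_t^{B;t_0}$ and $G_t:=G_t^{B;t_0}$. The base case $t=t_0$ is immediate from $C_{t_0}^{\overline{B};t_0}=G_{t_0}^{\overline{B};t_0}=I_{2n}$ and $F_{\ell;t_0}^{\overline{B};t_0}=1$. For the inductive step I fix an edge labelled $\ell\in\{1,\dots,n\}$ and compute each object at $t'$ from the data at $t$, writing every $2n\times 2n$ matrix in $n\times n$ blocks; by~\eqref{eq:extension B_tsign} the exchange matrix has the anti-diagonal form $\overline{B}_t=\left[\begin{smallmatrix} B_t & P_t\\ C_t & O\end{smallmatrix}\right]$ with $P_t=-D^{-1}C_t^TD$, which is the only place the skew-symmetrizer $D$ enters.

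The $G$-matrix is the mildest step. Substituting $G_t^{\overline{B};t_0}=\left[\begin{smallmatrix} G_t & O\\ O & I_n\end{smallmatrix}\right]$, $\overline{B}=\left[\begin{smallmatrix} B & -I_n\\ I_n & O\end{smallmatrix}\right]$ and the form of $\overline{B}_t$ into~\eqref{eq:g-frontmutation} (with $\ell\le n$), the first term $G_t^{\overline{B};t_0}\big(J_\ell+[\varepsilon\overline{B}_t]_+^{\bullet\ell}\big)$ acquires $[\varepsilon C_t]_+^{\bullet\ell}$ in its bottom-left block, whereas $-\overline{B}[\varepsilon C_t^{\overline{B};t_0}]_+^{\bullet\ell}$ contributes exactly $-[\varepsilon C_t]_+^{\bullet\ell}$ there and $-B[\varepsilon C_t]_+^{\bullet\ell}$ on the top left. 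The two bottom-left terms cancel, the top-left block reproduces~\eqref{eq:g-frontmutation} for $B$, and the diagonal blocks remain $O$ and $I_n$; this holds for either value of $\varepsilon$ and needs no sign-coherence.

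The $C$-matrix is the delicate step and the one I expect to be the main obstacle. Substituting the inductive form of $C_t^{\overline{B};t_0}$ into~\eqref{eq:c-frontmutation} reproduces $C_{t'}^{B;t_0}$ in the top-left block and the stable blocks $O$ and $I_n$, but leaves a top-right block
\[
C_t[\varepsilon P_t]_+^{\ell\bullet}+[-\varepsilon C_t]_+^{\bullet\ell}P_t
\]
that must vanish. The idea is to use that the right-hand side of~\eqref{eq:c-frontmutation} is independent of $\varepsilon$ (Remark~\ref{chooseep}) and to choose $\varepsilon=\varepsilon_{\bullet\ell}(C_t)$, the tropical sign. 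Then column sign-coherence (Theorem~\ref{thm:signs-ci}) makes the $\ell$th column of $-\varepsilon C_t$ nonpositive, so $[-\varepsilon C_t]_+^{\bullet\ell}=O$ and the second summand dies; and since the $\ell$th row of $P_t=-D^{-1}C_t^TD$ is, up to the positive factors $d_i$, the negative of the $\ell$th column of $C_t$, the same choice gives $[\varepsilon P_t]_+^{\ell\bullet}=O$, killing the first summand. Hence the top-right block is $O$ and the $C$-identity propagates. As an independent check I would note that $\overline{B}$ is invertible with $\overline{B}^{-1}=\left[\begin{smallmatrix} O & I_n\\ -I_n & B\end{smallmatrix}\right]$, so~\eqref{GB=BC} yields $C_t^{\overline{B};t_0}=\overline{B}^{-1}G_t^{\overline{B};t_0}\overline{B}_t$; plugging in the already-proved $G$-form reduces the claim to the tropical-duality identity $G_tD^{-1}C_t^TD=I_n$, which produces the same vanishing.

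Finally, for the $F$-polynomials I would split the index range. For $\ell\in\{n+1,\dots,2n\}$ no edge of $\TT_n$ carries the label $\ell$, so by~\eqref{eq:f-frontmutation} these polynomials never leave their initial value $1$. For $\ell\in\{1,\dots,n\}$ I run~\eqref{eq:f-frontmutation} for $\overline{B}$ and collapse it to the one for $B$: by the established form of $C_t^{\overline{B};t_0}$ the exponents $[\pm c_{i\ell;t}^{\overline{B};t_0}]_+$ vanish for $i>n$, so only $y_1,\dots,y_n$ occur; and although the $\ell$th column of $\overline{B}_t$ has nonzero lower entries $c_{i-n,\ell;t}$, the matching factors $F_{i;t}^{\overline{B};t_0}$ with $i>n$ equal $1$ by the inductive hypothesis and drop out. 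What survives is exactly the $B$-recurrence in $y_1,\dots,y_n$, so $F_{\ell;t'}^{\overline{B};t_0}=F_{\ell;t'}^{B;t_0}$, which closes the induction.
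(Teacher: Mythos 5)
Your proof is correct and follows essentially the same route as the paper: induction on the distance from $t_0$, writing everything in $n\times n$ blocks via Proposition~\ref{lem:extension B_t} and feeding those blocks into the recurrences \eqref{eq:c-frontmutation}, \eqref{eq:g-frontmutation} and \eqref{eq:f-frontmutation}, with your $G$- and $F$-steps coinciding with the paper's. The only (minor) divergence is in the $C$-step: the paper keeps $\varepsilon$ generic, tracks the accumulating top-right block as an explicit sum over the path (its intermediate identity~\eqref{eq:C-correspondence}, valid without sign-coherence) and only afterwards kills each summand using Theorem~\ref{thm:signs-ci}, whereas you kill the off-diagonal contribution at each step by choosing $\varepsilon=\varepsilon_{\bullet\ell}\big(C_t^{B;t_0}\big)$ in the $\varepsilon$-independent recurrence --- the same appeal to sign-coherence, packaged more directly but at the cost of not producing the sign-coherence-free formula~\eqref{eq:C-correspondence} that the paper also records.
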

\begin{proof}Firstly, we start the proof of the following equality:
\begin{gather}\label{eq:C-correspondence}
C_t^{\overline{B};t_0} =\begin{bmatrix}
C_t^{B;t_0}&\mathop{\sum}\limits_{s=0}^{m-1} \big(C_{t_s}^{B;t_0}\big[D^{-1}\big(C_{t_s}^{B;t_0}\big)^TD\big]_+^{i_{s+1}\bullet}-\big[C_{t_s}^{B;t_0}\big]_+^{\bullet i_{s+1}}D^{-1}\big(C_{t_s}^{B;t_0}\big)^TD\big)\\
O&I_n
\end{bmatrix}.
\end{gather}
We prove \eqref{eq:C-correspondence} by the induction on the distance between $t$ and $t_0$ in $\TT_{n}$. The base case $m=0$ is immediate as $C_{t_0}^{\overline{B};t_0}=I_{2n}$. It remains to show that if \eqref{eq:C-correspondence} holds for some $t \in \TT_n$, then it also holds
for $t' \in \TT_n$ such that\begin{xy}(0,1)*+{t}="A",(10,1)*+{t'}="B",\ar@{-}^\ell "A";"B" \end{xy}.
By \eqref{eq:c-frontmutation}, we have
\begin{align*}
C_{t'}^{\overline{B};t_0}&= C_{t}^{\overline{B};t_0}\big(J_{\ell}+[-\overline{B}_t]_+^{\ell\bullet }\big)+\big[C^{\overline{B};t_0}_t\big]_+^{\bullet \ell}\overline{B}_t\\
 & =\begin{bmatrix}
C_t^{B;t_0}&X\\
O&I_n
\end{bmatrix}
\left(\begin{bmatrix}
J_\ell&O\\
O&I_n
\end{bmatrix}+\begin{bmatrix}
[-B_t]_+^{\ell\bullet}&[D^{-1}\big(C_t^{B;t_0}\big)^TD]_+^{\ell\bullet}\\
O&O
\end{bmatrix}\right)\\
&\quad {}+\begin{bmatrix}
\big[C_t^{B;t_0}\big]_+^{\bullet \ell}&O\\
O&O
\end{bmatrix}
\begin{bmatrix}
B_t&-D^{-1}\big(C_t^{B;t_0}\big)^TD\\
C_t^{B;t_0}&Y
\end{bmatrix}\\
& =\begin{bmatrix}
C_{t}^{B;t_0}\big(J_{\ell}+[-{B}_t]_+^{\ell\bullet }\big)+\big[C^{B;t_0}_t\big]_+^{\bullet \ell}{B}_t&Z\\
O&I_n
\end{bmatrix} =\begin{bmatrix}
C_{t'}^{B;t_0}&Z\\
O&I_n
\end{bmatrix},
\end{align*}
where
\begin{gather*}
X =\mathop{\sum}\limits_{s=0}^{m-1} \big(C_{t_s}^{B;t_0}\big[D^{-1}\big(C_{t_s}^{B;t_0}\big)^TD\big]_+^{i_{s+1}\bullet}-\big[C_{t_s}^{B;t_0}\big]_+^{\bullet i_{s+1}}D^{-1}\big(C_{t_s}^{B;t_0}\big)^TD\big),\\
Y =\mathop{\sum}\limits_{s=0}^{m-1} \big(C_{t_s}^{B;t_0}\big[{-}D^{-1}\big(C_{t_s}^{B;t_0}\big)^TD\big]_+^{i_{s+1}\bullet}-\big[{-}C_{t_s}^{B;t_0}\big]_+^{\bullet i_{s+1}}D^{-1}\big(C_{t_s}^{B;t_0}\big)^TD\big),\\
Z =X+C_t^{B;t_0}\big[D^{-1}\big(C_t^{B;t_0}\big)^TD\big]_+^{\ell\bullet}-\big[C_t^{B;t_0}\big]_+^{\bullet \ell}D^{-1}\big(C_t^{B;t_0}\big)^TD.
\end{gather*}
Calculating $Z$, we have
\begin{gather*}
Z = \sum _{s=0}^{m} \big(C_{t_s}^{B;t_0}\big[D^{-1}\big(C_{t_s}^{B;t_0}\big)^TD\big]_+^{i_{s+1}\bullet}-\big[C_{t_s}^{B;t_0}\big]_+^{\bullet i_{s+1}}D^{-1}\big(C_{t_s}^{B;t_0}\big)^TD\big),
\end{gather*}
where $t_m=t$ and $i_{m+1}=\ell$. Thus we have~\eqref{eq:C-correspondence}. Furthermore, since the $(i, j)$ entry of
\begin{gather*}
C_t^{B;t_0}\big[D^{-1}\big(C_t^{B;t_0}\big)^TD\big]_+^{\ell\bullet}-\big[C_t^{B;t_0}\big]_+^{\bullet \ell}D^{-1}\big(C_t^{B;t_0}\big)^TD
\end{gather*}
is
\begin{gather*}
d^{-1}_{\ell}(c_{i\ell;t}[c_{j\ell;t}]_+-[c_{i\ell;t}]c_{j\ell;t})d_j,
\end{gather*}
we have $Z=O$ by applying the sign-coherence of the $C$-matrices. Hence we have \eqref{eq:C-correspondencesign}.
Secondly, we prove \eqref{eq:G-correspondence} by the same way as \eqref{eq:C-correspondence}. The base case $m=0$ is immediate as $G_{t_0}^{\overline{B};t_0}=I_{2n}$. By the inductive assumption, we have
\begin{align*}
G_{t'}^{\overline{B};t_0}&=G_{t}^{\overline{B};t_0}\big(J_{\ell}+[\overline{B}_t]_+^{\bullet \ell}\big)-\overline{B}\big[C_t^{\overline{B};t_0}\big]_+^{\bullet \ell}\\
&=\begin{bmatrix}
G_t^{B;t_0}&O\\
O&I_n
\end{bmatrix}
\left(\begin{bmatrix}
J_\ell&O\\
O&I_n
\end{bmatrix}+\begin{bmatrix}
[{B}_t]_+^{\bullet \ell}&O\\
\big[C_t^{B;t_0}\big]_+^{\bullet \ell}&O
\end{bmatrix}\right)-\begin{bmatrix}
B&-I_n\\
I_n&O
\end{bmatrix}
\begin{bmatrix}
\big[C_t^{B;t_0}\big]_+^{\bullet \ell}&O\\
O&O
\end{bmatrix}\\
&=\begin{bmatrix}
G_{t}^{{B};t_0}\big(J_{\ell}+[{B}_t]_+^{\bullet \ell}\big)-B\big[C_t^{B;t_0}\big]_+^{\bullet \ell}&O\\
O&I_n
\end{bmatrix} =\begin{bmatrix}
G_{t'}^{B;t_0}&O\\
O&I_n
\end{bmatrix}
\end{align*}
as desired.
Finally, we prove \eqref{eq:F-correspondence} by the same way as \eqref{eq:C-correspondence} or \eqref{eq:G-correspondence}. The base case $m=0$ is immediate as $F_{i;t_0}^{\overline{B};t_0}(\yy)=1 $ for all $i$. Let $C_t^{\overline{B};t_0}=(\overline{c}_{ij;t})$ and $\overline{B}_t=(\overline{b}_{ij;t})$, and we abbreviate $F_{i;t}^{B;t_0}(\yy)=F_{i;t}$ and $F_{i;t}^{\overline{B};t_0}(\yy)=\overline{F}_{i;t}$. By the inductive assumption and Proposition \ref{lem:extension B_t} and \eqref{eq:C-correspondence}, we have
\begin{align*}
\overline{F}_{i;t'}&= \overline{F}_{i;t}=F_{i;t} =F_{i;t'} \qquad \text{if $i\neq \ell$,}\\
\overline{F}_{\ell;t'} &= \frac{\prod\limits_{j=1}^{2n} y_j^{[\overline{c}_{j\ell;t}]_+}\prod\limits_{i=1}^{2n} \overline{F}_{i;t}^{[\overline{b}_{i\ell;t}]_+}
+ \prod\limits_{j=1}^{2n} y_j^{[-\overline{c}_{j\ell;t}]_+} \prod\limits_{i=1}^{2n} \overline{F}_{i;t}^{[-\overline{b}_{i\ell;t}]_+}}{\overline{F}_{\ell;t}}\\
&= \frac{\prod\limits_{j=1}^{n} y_j^{[c_{j\ell;t}]_+} \prod\limits_{i=1}^{n} F_{i;t}^{[b_{i\ell;t}]_+} + \prod\limits_{j=1}^{n} y_j^{[c_{j\ell;t}]_+} \prod\limits_{i=1}^n F_{i;t}^{[-b_{i\ell;t}]_+}}{F_{\ell;t}}
={F}_{\ell;t'}
\end{align*}
as desired.
\end{proof}

\begin{Remark}\label{re:remark of C} The equality \eqref{eq:C-correspondencesign} is the specialisation of \cite[equation~(2)]{cl}. Indeed, setting $m=n$ and substituting $B$, $I_n$, $-I_n$ and $O$ for $B_1$, $B_2$, $B_3$ and $B_4$ in $\tilde{B}=
\left(\begin{smallmatrix}
B_1&B_2\\
B_3&B_4\\
I_n&0\\
0&I_m
\end{smallmatrix}\right)$ appearing in \cite[Proof of Theorem~4.5]{cl}, we have \eqref{eq:C-correspondencesign} by \cite[equation~(2)]{cl}.
\end{Remark}

\begin{Remark}\label{re:remark of F}We did not use the sign-coherence of the $C$-matrices when we show \eqref{eq:G-correspondence} and~\eqref{eq:F-correspondence}. Furthermore, \eqref{eq:F-correspondence} always holds when $\overline{B}$ is a $2n\times 2n$ matrix whose the upper left $n\times n$ submatrix is $B$.
\end{Remark}
By Theorem \ref{thm:CGF-correspondence} and Remark \ref{re:remark of F}, we have the following corollary:
\begin{Corollary}\label{thm:FHmat-correspondence} Let $\{\Sigma_t=(\xx_t, \yy_t, B_t)\}_{t\in\TT_{n}}$ be a cluster pattern with the initial vertex $t_0$. Then, for any $t\in\TT_n$, the following equalities hold:
\begin{gather} \label{eq:Fmat-correspondence}
F_t^{\overline{B};t_0} =\begin{bmatrix}
F_t^{B;t_0}&O\\
O&O
\end{bmatrix},\\
\label{eq:hextend}
H^{\overline{B}; t_0}_t = \begin{bmatrix}
	H^{B; t_0}_t & O \\
	O & O
\end{bmatrix}.
\end{gather}
\end{Corollary}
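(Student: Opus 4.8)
The plan is to obtain both equalities by reading them off the correspondences in Theorem~\ref{thm:CGF-correspondence}, exploiting that the $F$-matrix is the degree matrix of the $F$-polynomials and that the $H$-matrix is governed by the $G$-matrix.

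For \eqref{eq:Fmat-correspondence} I would argue as follows. By definition the $\ell$th column of $F_t^{\overline{B};t_0}$ lists the maximal degrees of $y_1,\dots,y_{2n}$ in $F_{\ell;t}^{\overline{B};t_0}(\yy)$. Invoking \eqref{eq:F-correspondence}: for $\ell\in\{1,\dots,n\}$ this polynomial equals $F_{\ell;t}^{B;t_0}(\yy)$, which involves only $y_1,\dots,y_n$, so its degree data fill the upper-left $n\times n$ block with $F_t^{B;t_0}$ and leave rows $n+1,\dots,2n$ zero; for $\ell\in\{n+1,\dots,2n\}$ the polynomial is $1$ and contributes a zero column. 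Assembling the columns yields the asserted block form. By Remark~\ref{re:remark of F} this argument, like \eqref{eq:F-correspondence}, uses no sign-coherence of the $C$-matrices.

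For \eqref{eq:hextend} the cleanest route is the identity $H_t^{\overline{B};t_0}=-\big[-G_t^{\overline{B};t_0}\big]_+$ of Theorem~\ref{thm:H=G}. Substituting the block form \eqref{eq:G-correspondence} of $G_t^{\overline{B};t_0}$ and taking positive parts entrywise, the cancellation $[-I_n]_+=O$ kills the lower-right block, giving $\big[-G_t^{\overline{B};t_0}\big]_+=\left[\begin{smallmatrix}[-G_t^{B;t_0}]_+&O\\O&O\end{smallmatrix}\right]$; negating and reapplying \eqref{eq:H=G} then produces the claimed form of $H_t^{\overline{B};t_0}$. Alternatively, in the spirit of Remark~\ref{re:remark of F} one may bypass sign-coherence altogether and compute $H_t^{\overline{B};t_0}$ directly from its defining tropical evaluation, using \eqref{eq:F-correspondence} together with the explicit initial matrix $\overline{B}$ of \eqref{eq:extensionB_0}: columns $j>n$ vanish since $F_{j;t}^{\overline{B};t_0}=1$; for $j\le n$ the upper block reproduces $H_t^{B;t_0}$ because the top-left block of $\overline{B}$ is $B$; and the lower block vanishes because the $I_n$ block of $\overline{B}$ forces every relevant argument $u^{[-\overline{b}_{i\ell}]_+}$ to equal $1$, at which a subtraction-free polynomial evaluates to $1$ in $\text{Trop}(u)$.

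I do not anticipate a genuine obstacle here: the content is bookkeeping against the two block decompositions. The only points meriting care are the genuine absence of $y_{n+1},\dots,y_{2n}$ from the $F$-polynomials entering \eqref{eq:Fmat-correspondence}, and, in the $G$-matrix route to \eqref{eq:hextend}, the entrywise identity $[-I_n]_+=O$ (equivalently, in the direct route, the fact that the tropical evaluation of an $F$-polynomial at all ones equals $1$).
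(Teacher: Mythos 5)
Your proof is correct. For \eqref{eq:Fmat-correspondence} you argue exactly as the paper does: it is read off immediately from \eqref{eq:F-correspondence} and the definition of the $F$-matrix as the degree matrix. For \eqref{eq:hextend}, however, your primary route differs from the paper's: you invoke Theorem~\ref{thm:H=G} to write $H_t^{\overline{B};t_0}=-\big[{-}G_t^{\overline{B};t_0}\big]_+$ and then substitute the block form \eqref{eq:G-correspondence}, using $[-I_n]_+=O$; this is valid (note $\overline{B}$ is skew-symmetrizable with skew-symmetrizer $D\oplus D$, so Theorem~\ref{thm:H=G} applies at rank $2n$) and is arguably slicker. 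The paper instead computes $H_t^{\overline{B};t_0}$ directly from its defining tropical evaluation --- precisely your ``alternative'' route: the upper-left block and the right columns follow from \eqref{eq:F-correspondence}, and for $i\in\{n+1,\dots,2n\}$, $j\in\{1,\dots,n\}$ the $I_n$ block of $\overline{B}$ makes every argument $u^{[-\overline{b}_{i\ell}]_+}=u^0$, so the evaluation is $1$ and $h_{ij;t}=0$. The trade-off is worth noting: your primary route imports Theorem~\ref{thm:H=G}, which rests on sign-coherence and on the Nakanishi--Zelevinsky duality, whereas the paper's direct computation needs neither (in keeping with Remark~\ref{re:remark of F}); moreover, Section~\ref{section4.2} later uses \eqref{eq:hextend} in its \emph{alternative} derivation of Theorem~\ref{thm:H=G} via the principal extension, so proving \eqref{eq:hextend} from Theorem~\ref{thm:H=G} would render that alternative derivation circular. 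Your fallback direct computation avoids both issues and coincides with the paper's argument.
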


\begin{proof}We have \eqref{eq:Fmat-correspondence} immediately by \eqref{eq:F-correspondence}. Except for the lower left $n\times n$ submatrix, \eqref{eq:hextend} is obtained from the definition of the $H$-matrices and \eqref{eq:F-correspondence}.
For $i\in \{n+1, \dots, 2n\}$ and $j\in\{1, \dots, n\}$, we have
\begin{align*}
u^{h_{ij;t}}&=F^{\overline{B}; t_0}_{j;t}|_{\text{Trop}(u)}\big(u^{[-\overline{b_{i1}}]_+}, \dots, u^{-1}, \dots, u^{[-\overline{b_{i,2n}}]_+}\big) \qquad \big(\text{$u^{-1}$ in the $i$th position}\big) \\
&= F^{B; t_0}_{j;t}|_{\text{Trop}(u)}\big(u^0, \dots, u^0\big)=1.
\end{align*}
Hence we obtain $h_{ij;t}=0$.
\end{proof}

\subsection{Alternative derivations of Propositions \ref{pr:ffr}, \ref{pr:frear}, Theorems~\ref{thm:H=G} and \ref{thm:F-opposite}}\label{section4.2}
\looseness=-1 Using the principal extension, we give alternative derivations of Propositions~\ref{pr:ffr},~\ref{pr:frear}, Theorems~\ref{thm:H=G} and~\ref{thm:F-opposite}. We think that these derivations are important in point of the following. That is, we cancel out exchange matrix $B_t$ in equations to derivate the desired equation in a process of proving. By the principal extension, we can do it even if $B_t$ is not invertible in various cases. We believe that we also apply this technique to any other proofs of theorems in cluster algebra theory.

\begin{proof}[Alternative derivation of Proposition \ref{pr:ffr}] For any edge \begin{xy}(0,1)*+{t}="A",(10,1)*+{t'}="B",\ar@{-}^{\ell}"A";"B" \end{xy} in $\TT_n$, by using \eqref{eq:g-frontmutation} and~\eqref{eq:G=G+BF}, {we obtain two expressions of $G^{-B; t_0}_{t'}$ by the following diagram:}
\[
	\xymatrix{
		G^{B; t_0}_t\ar[r]^{\eqref{eq:G=G+BF}} \ar[d]_{\text{final-seed mutation \eqref{eq:g-frontmutation}}}& G^{-B; t_0}_t \ar[d]^{\text{final-seed mutation \eqref{eq:g-frontmutation}}}\\
		G^{B; t_0}_{t'} \ar[r]_{\eqref{eq:G=G+BF}} & G^{-B; t_0}_{t'}.
	}
\]
Applying \eqref{eq:G=G+BF} to $G^{B; t_0}_t$ before applying the final-seed mutation, we have
\begin{gather*}
G^{-B; t_0}_{t'}=G_{t'}^{B;t_0}+BF_{t'}^{B;t_0} =G^{B; t_0}_t\big(J_\ell+[\varepsilon B_t]^{\bullet \ell}_+\big)-B\big[\varepsilon C^{B; t_0}_t\big]^{\bullet \ell}_+ +BF_{t'}^{B;t_0}.
\end{gather*}
On the other hand, applying the final-seed mutation to $G^{B; t_0}_t$ before applying \eqref{eq:G=G+BF}, we have
\begin{align*}
G^{-B; t_0}_{t'}&=G^{-B; t_0}_t\big(J_\ell+[\varepsilon (-B_t)]^{\bullet \ell}_+\big)-(-B)\big[\varepsilon C^{-B; t_0}_t\big]^{\bullet \ell}_+\\
&=\big(G_t^{B;t_0}+BF_t^{B;t_0}\big)\big(J_\ell+[-\varepsilon B_t]^{\bullet \ell}_+\big)+B\big[\varepsilon C^{-B; t_0}_t\big]^{\bullet \ell}_+.
\end{align*}
Comparing these two expressions, we obtain
\begin{gather}\label{eq:BF=GB+BFJB+BC+BC}
BF^{B; t_0}_{t'}=G^{B; t_0}_t(-\varepsilon B_t)^{\bullet\ell}+BF^{B; t_0}_t\big(J_{\ell}+[-\varepsilon B_t]^{\bullet\ell}_+\big) +B\big[\varepsilon C^{-B; t_0}_t\big]^{\bullet\ell}_+ +B\big[\varepsilon C^{B; t_0}_t\big]^{\bullet\ell}_+.
\end{gather}
Hence if $B$ is invertible, we have
\begin{gather}\label{eq:F=BGB+FJB+C+C}
	F^{B; t_0}_{t'}=B^{-1}G^{B; t_0}_t(-\varepsilon B_t)^{\bullet\ell}+F^{B; t_0}_t\big(J_{\ell}+[-\varepsilon B_t]^{\bullet\ell}_+\big)+
\big[\varepsilon C^{-B; t_0}_t\big]^{\bullet\ell}_+ +\big[\varepsilon C^{B; t_0}_t\big]^{\bullet\ell}_+.
\end{gather}
Since $B^{-1}G^{B; t_0}_t=C^{B; t_0}_t B_t^{-1}$, the first term of the right hand side of \eqref{eq:F=BGB+FJB+C+C} is $\big({-}\varepsilon C^{B; t_0}_t\big)^{\bullet\ell}$.
Moreover, we note that we have
\begin{gather*}
	\big({-}\varepsilon C^{B; t_0}_t\big)^{\bullet\ell}+\big[\varepsilon C^{B; t_0}_t\big]^{\bullet\ell}_+=\big[{-}\varepsilon C^{B; t_0}_t\big]^{\bullet\ell}_+.
\end{gather*}
Thus we have \eqref{eq:ffr} as desired. When $B$ is not invertible, substituting $\overline{B}$ for $B$ in \eqref{eq:BF=GB+BFJB+BC+BC}, we have the equality which is substituted $\overline{B}$ for $B$ in \eqref{eq:F=BGB+FJB+C+C}. Thanks to \eqref{eq:C-correspondence} and \eqref{eq:Fmat-correspondence}, comparing the upper left $n\times n$ submatrix of it, we have \eqref{eq:ffr} as desired.
\end{proof}

\begin{proof}[Alternative derivation of Proposition \ref{pr:frear}]
We prove \eqref{eq:frear}. By using \eqref{eq:crear} and \eqref{eq:c-}, {we obtain two expressions of $C^{-B_1; t_1}_t$ by the following diagram:}
\begin{align*}
	\xymatrix{
		C^{B; t_0}_t\ar[r]^{\eqref{eq:c-}} \ar[d]_{\text{initial-seed mutation \eqref{eq:crear}}}& C^{-B; t_0}_t \ar[d]^{\text{initial-seed mutation \eqref{eq:crear}}} \\
		C^{B_1; t_1}_t \ar[r]_{\eqref{eq:c-}} & C^{-B_1; t_1}_t.
	}
\end{align*}
Applying \eqref{eq:c-} to $C_t^{B;t_0}$ before applying the initial-seed mutation, we have
\begin{align*}
	C^{-B_1; t_1}_t&=\big(J_k+[-\varepsilon (-B)]^{k\bullet}_+\big)C^{-B; t_0}_t+H^{-B; t_0}_t(\varepsilon)^{k\bullet} (-B_t) \\
&=\big(J_k+[\varepsilon B]^{k\bullet}_+\big)\big(C^{B; t_0}_t+F^{B; t_0}_tB_t\big)-H^{-B; t_0}_t(\varepsilon)^{k\bullet}B_t.
\end{align*}
On the other hand, applying the initial-seed mutation to $C_t^{B;t_0}$ before applying \eqref{eq:c-}, we have
\begin{gather*}
C^{-B_1; t_1}_t =C^{B_1; t_1}_t+F^{B_1; t_1}_tB_t =\big(J_k+[-\varepsilon B]^{k\bullet}_+\big)C^{B; t_0}_t+H^{B; t_0}_t(\varepsilon)^{k\bullet} B_t+F^{B_1; t_1}_tB_t.
\end{gather*}
Comparing these two expressions, we have
\begin{gather*}
F^{B_1; t_1}_tB_t=(\varepsilon B)^{k\bullet}C^{B; t_0}_t+\big(J_k+[\varepsilon B]^{k\bullet}_+\big)F^{B; t_0}_tB_t-H^{-B; t_0}_t(\varepsilon)^{k\bullet}B_t-H^{B; t_0}_t(\varepsilon)^{k\bullet} B_t.
\end{gather*}
When $B$ is invertible, all $B_t$ also is. Therefore, we have
\begin{gather}\label{eq:F=BCB}
F^{B_1; t_1}_t=(\varepsilon B)^{k\bullet}C^{B; t_0}_tB^{-1}_t+\big(J_k+[\varepsilon B]^{k\bullet}_+\big)F^{B; t_0}_t-H^{-B; t_0}_t(\varepsilon)^{k\bullet}-H^{B; t_0}_t(\varepsilon)^{k\bullet}.
\end{gather}
By \eqref{GB=BC}, $BC^{B; t_0}_t=G^{B; t_0}_tB_t$ for any $t$, the first term of right hand side of \eqref{eq:F=BCB} is $\big(\varepsilon G^{B; t_0}_t\big)^{k\bullet}$.
Hence we have desired equality. On the other hand, when $B$ is not invertible, we consider replacing $B$ with $\overline{B}$ in \eqref{eq:F=BCB}.
Thanks to \eqref{eq:G-correspondence} and Corollary \ref{thm:FHmat-correspondence}, we have \eqref{eq:frear}.
\end{proof}

\begin{proof}[Alternative derivation of Theorem~\ref{thm:H=G}] We get \eqref{eq:BG=BH} by the same way as a previous proof. If $B$ is invertible, then we have $-\big[{-}G^{B; t_0}_t\big]^{k\bullet}_+=\big(H^{B; t_0}_t\big)^{k\bullet}$. Applying this equality to all $k=1, \dots, n$, we have $H^{B; t_0}_t=-\big[{-}G^{B; t_0}_t\big]$. If $B$ is not invertible, replacing $B$ with $\overline{B}$ in \eqref{eq:BG=BH}, we have $-\big[{-}G^{\overline{B}; t_0}_t\big]^{k\bullet}_+=\big(H^{\overline{B}; t_0}_t\big)^{k\bullet}$. Comparing the upper left $n\times n$ submatrices of both sides of it, we obtain $-\big[{-}G^{B; t_0}_t\big]^{k\bullet}_+=\big(H^{B; t_0}_t\big)^{k\bullet}$.
\end{proof}

\begin{proof}[Alternative derivation of Theorem \ref{thm:F-opposite}]
By the equalities which substitute $B_t^T$, $t_0$ and $t$ for $B$, $t_0$ and $t$ respectively in \eqref{eq:c-}, \eqref{eq:G=G+BF} and \eqref{eq:CG-opponent}, we have
\begin{gather}\label{eq:BF=BF}
B\big(F_t^{B;t_0}\big)^T = BF_{t_0}^{B_t^T;t}.
\end{gather}
When $B$ is invertible, we get \eqref{eq:F-opposite} immediately. We prove the case that $B$ is not invertible. Replacing $B$ with $\overline{B}$ in \eqref{eq:BF=BF}, we have
\begin{gather*}
\big(F_t^{\overline{B};t_0}\big)^T = F_{t_0}^{\overline{B}_t^T;t}
\end{gather*}
because $\overline{B}$ is invertible. Since we have
\begin{gather*}
\big(F_t^{\overline{B};t_0}\big)^T =\begin{bmatrix}
\big(F_t^{B;t_0}\big)^T&O\\
O&O
\end{bmatrix}
\end{gather*}
and
\begin{gather*}
F_{t_0}^{\overline{B}_t^T;t}=F_{t_0}^{\overline{B^T}_t;t}=
\begin{bmatrix}
F_{t_0}^{B_t^T;t}&O\\
O&O
\end{bmatrix}
\end{gather*}
by \eqref{eq:F-correspondence} and Remark \ref{re:remark of F}, we have \eqref{eq:F-opposite}.
\end{proof}

\subsection*{Acknowledgments}
The authors are grateful to Professor Tomoki Nakanishi for useful comments and advice. We also thanks Toshiya Yurikusa for insightful comments about the principal extension. We also would like express our gratitude to the referees.

\pdfbookmark[1]{References}{ref}
\LastPageEnding

\end{document}